\newcommand{\ud}{\textrm{d}}
\newcommand{\dd}[2]{\frac{\ud #1}{\ud #2}}
\newcommand{\df}[2]{\frac{\partial #1}{\partial #2}}
\newcommand{\con}{\bm{q}}
\newcommand{\prim}{\bm{v}}
\DeclareMathOperator{\ee}{e}
\newcommand{\imh}{{i - \frac{1}{2}}}
\newcommand{\iph}{{i + \frac{1}{2}}}
\newcommand{\jmh}{{j - \frac{1}{2}}}
\newcommand{\jph}{{j + \frac{1}{2}}}
\newcommand{\fl}{\bm{f}}
\newcommand{\gl}{\bm{g}}
\newcommand{\bw}{\bm{w}}
\newcommand{\nfl}{\hat{\fl}}
\newcommand{\ngl}{\hat{\gl}}
\newcommand{\bs}{\bm{s}}
\newcommand{\half}{\frac{1}{2}}
\newcommand{\ohalf}{\frac{3}{2}}
\newcommand{\const}{\textrm{const}}
\newcommand{\intem}{\varepsilon}
\newcommand{\up}{\tilde{u}}
\newcommand{\sign}{\textrm{sign}}
\newcommand{\ue}{\bar{u}}
\newcommand{\pe}{\bar{p}}
\newcommand{\Te}{\bar{T}}
\newcommand{\phe}{\tilde{p}}
\newcommand{\thetae}{\bar{\theta}}
\newcommand{\rhoe}{\bar{\rho}}
\newcommand{\rhohe}{\tilde{\rho}}
\newcommand{\minmod}{\mathcal{M}}
\newcommand{\nx}{{N_x}}
\newcommand{\ny}{{N_y}}
\newcommand{\km}{\textrm{ Km}}
\newtheorem{theorem}{Theorem}
\newtheorem{definition}{Definition}
\newtheorem{remark}{Remark}
\journal{Computers \& Fluids}
\begin{document}

\begin{frontmatter}

\title{A second-order, discretely well-balanced finite volume scheme for Euler equations with gravity}

\author[label2]{Deepak Varma}
\author[label1]{Praveen Chandrashekar}

\address[label2]{ TIFR Center for Applicable Mathematics, Bangalore, India ({\tt deepakvarmars@gmail.com})}
\address[label1]{ TIFR Center for Applicable Mathematics, Bangalore, India ({\tt praveen@math.tifrbng.res.in})}

\begin{abstract}
We present a well-balanced, second order, Godunov-type finite volume scheme for compressible Euler equations with gravity. By construction, the scheme admits a discrete stationary solution which is a second order accurate approximation to the exact stationary solution. Such a scheme is useful for problems involving complex equations of state and/or hydrostatic solutions which are not known in closed form expression. No \'a priori knowledge of the hydrostatic solution is required to achieve the well-balanced property. The performance of the scheme is demonstrated on several test cases in terms of preservation of hydrostatic solution and computation of small perturbations around a hydrostatic solution.
\end{abstract}

\begin{keyword}
Finite volume \sep Euler equations \sep gravity \sep well-balanced
\end{keyword}

\end{frontmatter}

\pagestyle{myheadings}

\section{Introduction}
The Euler equations model the flow of compressible fluids and are a system of non-linear, coupled partial differential equations which mathematically embody the principles of conservation of mass, momentum and energy. In the presence of an external gravitational field, the fluid elements experience an additional force due to gravity which contributes to a source term in the momentum and energy conservation equations. The presence of these source terms leads to non-trivial steady state solutions of the equations. An important class of such solutions are the so called {\em hydrostatic solutions} which are steady solutions with zero velocity. In many problems like weather prediction in the Earth's atmosphere or in a star, the system may be in a hydrostatic state atleast locally and for very long periods of time. Accurately simulating such stationary solutions with a numerical scheme can be a challenging task since commonly used algorithms may not have the property to maintain hydrostatic solutions. {\em A numerical scheme that can exactly maintain a hydrostatic initial condition for all times and on any grid is said to be well-balanced}. Schemes which are not exactly well-balanced have to rely on their consistency to approximately maintain hydrostatic solutions. It becomes necessary to use very high order schemes and/or large grids in order to keep the error to small levels. This becomes especially critical if we are interested in computing small perturbations around the hydrostatic solution, as the truncation errors can dominate the small perturbations leading to very large errors in the solutions. 

In the hydrostatic state, the pressure gradient is balanced by the gravitational force. This balance must be achieved in the numerical scheme also for it to be well-balanced. The hydrostatic state is governed by a first order differential equation whose solution requires assuming additional properties on the solution, e.g., isothermal, polytropic, etc. Since there is a wide variety of hydrostatic solutions depending on the equation of state and other conditions of a particular physical problem, it is not possible to construct a scheme that is well-balanced for a wide variety of problems. The well-balanced property is usually achieved by exploiting the structure of the hydrostatic solution which is used in the reconstruction process of finite volume schemes, and also in discretizing the gravitational source terms. Most of the existing schemes have been constructed to be well-balanced for ideal gas equation of state together with either isothermal or polytropic solutions. The scheme in~\cite{Kappeli2014} is based on isentropic assumption and uses constancy of enthalpy to perform the reconstruction, while~\cite{Kappeli2017} constructs a scheme for isothermal case by using the constancy of Gibbs free energy. The scheme by Chandrashekar and Klingenberg~\cite{Chandrashekar2015} was well-balanced for both isothermal and polytropic solutions and did not require \'a priori knowledge of the temperature (for isothermal case) or polytropic index. Schemes which are well-balanced for arbitrary equations of state have been proposed in \cite{Ghosh2015}, \cite{Ghosh2016}, \cite{Berberich2016} but they require \'a priori knowledge of the hydrostatic solution which is used to rewrite the source term to achieve the well-balanced property. A few well-balanced discontinuous Galerkin methods have also been proposed in recent years. A well-balanced DG scheme for isothermal case was proposed in~\cite{Li2016} and extensions were made in~\cite{Li2018} for polytropic case. Chandrashekar and Zenk~\cite{Chandrashekar2017} proposed well-balanced nodal discontinuous Galerkin methods for isothermal and polytropic solutions but these schemes require \'a priori knowledge of the hydrostatic solution which is used in the source term approximation.  A finite volume scheme that preserves a discrete hydrostatic solution has been constructed in~\cite{Kappeli2016} which does not require \'a priori knowledge of the type of hydrostatic solution that may occur in a particular problem. Note that hydrostatic solutions are not polynomial functions and hence they cannot be exactly represented in a finite element method. What is preserved by the well-balanced finite element method is a projection of the hydrostatic solution onto the finite element space, e.g., \cite{Chandrashekar2017} preserves the interpolation of the hydrostatic solution onto the Lagrange polynomials. Many schemes require the use of particular numerical flux functions to achieve well-balanced property, e.g., \cite{Xing2013}, \cite{Ghosh2016}, \cite{Li2017} make use of a modified local Lax-Friedrich flux where the dissipation operator is altered and \cite{Desveaux2014}, \cite{Desveaux2016} are built using relaxation solvers, while other schemes like in \cite{Chandrashekar2015}, \cite{Chandrashekar2017}, \cite{Kappeli2014}, \cite{Kappeli2017} are well-balanced for any numerical flux function. The latter type of schemes have greater applicability since practitioners can make use of the best available numerical flux functions for their particular problem. Among other approaches, we may mention the wave propagation scheme in~\cite{LeVeque1999} and central schemes in~\cite{Chertock2018}.

In finite volume and finite difference schemes, the basic unknowns are values associated with each volume or point. A well-balanced finite volume scheme may be designed to preserve the interpolation of the exact hydrostatic solution on the mesh. In this work, we do not try to exactly preserve the exact hydrostatic solution, which may not even be known in an explicit form for general equations of state and for realistic problems occuring in astrophysics. Instead, {\em we construct a scheme which admits a hydrostatic solution which is a second order accurate approximation to the exact hydrostatic solution}. This allows us to make the scheme well-balanced for general equations of state and for any type of hydrostatic solution, and we do not need \'a priori knowledge of which type of hydrostatic solution may be realized in a particular problem. Note that our scheme is well-balanced for the discrete hydrostatic solution which can be found numerically. This is in contrast to the scheme in~\cite{Chandrashekar2015} which well-balances the exact hydrostatic solution but only for ideal gas and certain types of hydrostatic solutions, namely isothermal and polytropic. In the Appendix, we give some arguments to show that such a discretely well-balanced scheme can give consistent and accurate approximation to small perturbations around a hydrostatic solution. Any consistent numerical flux function that exactly resolves stationary contact discontinuities can be used which makes this method of interest to practitioners who want to use their own favourite numerical flux function. We construct this scheme on Cartesian grids but the extension to general curvilinear grids can also be made. The well-balanced property is achieved by using two main techniques: (1) a hydrostatic reconstruction scheme and (2) a discretization of gravitational source terms in terms of a local hydrostatic solution.  These ingredients are similar to what is used in~\cite{Chandrashekar2015} but with some modifications that allow us to deal with general hydrostatic solutions, whereas the previous work was restricted to ideal gas equation of state. Both of these techniques are based on exploiting a particular structure of the hydrostatic solution. A restriction of this scheme and also the one in~\cite{Kappeli2016}, is that the well-balanced property requires the gravitational force to be aligned with the grid lines. In atmospheric or stellar problems where the gravity is predominantly in the radial direction, this requirement can be satisfied with the use of polar or spherical grids. However we show in numerical solutions that in practice, the hydrostatic state is quite well maintained even otherwise.

The rest of the paper is organized as follows. In section~(\ref{sec:1d}), we introduce the 1-D Euler equations and some standard equations of state that are of common interest. In section~(\ref{sec:fvm1d}), we give a full description of our approach to well-balanced scheme and prove this as a theorem. The extension to two dimensions is made in section~(\ref{sec:fvm2d}) where we also discuss the treatment of some boundary conditions. Sections~(\ref{sec:res1d}) and~(\ref{sec:res2d}) present several numerical results in one and two dimensions to show the performance of the current scheme. We end the paper with some conclusions in section~(\ref{sec:end}).
\section{1-D Euler equations with gravity}
\label{sec:1d}
Consider the system of compressible Euler equations in one dimension which models conservation of mass, momentum and energy and are given by
\begin{eqnarray*}
\df{\rho}{t} + \df{}{x}(\rho u) &=& 0 \\
\df{}{t}(\rho u) + \df{}{x}(p+\rho u^2) &=& -\rho \df{\phi}{x} \\
\df{E}{t} + \df{}{x}(Eu + pu) &=& -\rho u \df{\phi}{x}
\end{eqnarray*}
Here $\rho$ is the density, $u$ is the velocity, $p$ is the pressure, $E$ is the energy per unit volume excluding the gravitational energy and $\phi$ is the gravitational potential. The  energy $E$ is given by
\[
E = \rho \intem + \half \rho u^2
\]
where $\intem$ is the internal energy per unit mass. We can write the above set of coupled equations in a compact notation as
\[
\df{\con}{t} + \df{\fl}{x} = \begin{bmatrix}
0 \\ s \\ s u \end{bmatrix}, \qquad s = -\rho \df{\phi}{x}
\]
where $\con=[\rho, \ \rho u, \ E]^\top$ is the set of conserved variables and $\fl = [\rho u, \ p + \rho u^2, \ (E+p)u]^\top$ is the corresponding flux vector. In the case of a self gravitating system, the gravitational potential $\phi$ is governed by a Poisson-type equation whose details are not relevant for the present discussion. We will consider the case of static gravitational potential which is assumed to be given as a function of the spatial coordinates. However the scheme we propose can be used for time dependent potential also. To simplify some of the later notation, we also denote the set of primitive variables by $\prim=[\rho, \ u, \ p]^\top$.
\subsection{Hydrostatic states}
Consider the hydrostatic stationary solution, i.e., the solution for which the velocity is zero
\[
\ue = 0
\]
We will denote the quantities in a hydrostatic solution with an overbar. In this case, the mass and energy conservation equations are automatically satisfied. The momentum equation becomes an ordinary differential equation given by
\begin{equation}
\dd{\pe}{x} = -\rhoe \dd{\phi}{x}
\label{eq:hydro}
\end{equation}
We have only one equation but two thermodynamic quantities, $\rhoe$, $\pe$, to be determined. The pressure, density and temperature are related by an equation of state. We will write the equation of state in the form
\begin{equation}
p = \rho \theta
\label{eq:eostheta}
\end{equation}
where $\theta$ is in general a function of the thermodynamic variables, e.g., $\theta = \theta(p,T)$ or $\theta = \theta(\rho,T)$, with $T$ being the temperature. This form of equation of state is commonly used in astrophysical applications where the function $\theta$ may be given in a tabular form. Integrating the hydrostatic equation~(\ref{eq:hydro}) we obtain
\begin{equation}
\pe(x) = p_0 \ee^{\psi(x)}, \qquad \psi(x) = -\int_{x_0}^x\frac{\phi'(s)}{\theta(\pe(s), \Te(s))}\ud s
\label{eq:hydrostruct}
\end{equation}
In the above equation, $p_0$ is the pressure at some reference position $x_0$. This is a {\em general relation that holds for any hydrostatic solution and any equation of state, and is the crucial building block for our scheme}. The above integral can be explicitly evaluated in some simple situations. We next give several examples to illustrate that the equations of state can be put in the form~(\ref{eq:eostheta}) and show the hydrostatic relation in some simple situations.
\subsubsection{Ideal gas}
For an ideal gas model we have $\theta = RT$ where $R$ is the gas constant. Assuming some equilibrium temperature profile $\Te(x)$, the pressure can be computed using~(\ref{eq:hydrostruct}) as
\[
\pe(x) = p_0 \exp\left(-\int_{x_0}^x\frac{\phi'(s)}{R\Te(s)}\ud s\right)
\]
If the hydrostatic state is {\em isothermal}, i.e., $\Te(x) = T_0 = \const$, then
\begin{equation}
\pe(x) \exp\left(\frac{\phi(x)}{R T_0}\right) = \const
\label{eq:isot}
\end{equation}
If the hydrostatic solution is {\em polytropic}~\cite{Chandrasekhar1967}, then we have the following relations satisfied
\begin{equation}
\pe \rhoe^{-\nu} = \const, \qquad \pe \Te^{-\frac{\nu}{\nu-1}} = \const, \qquad \rhoe \Te^{-\frac{1}{\nu-1}} = \const
\label{eq:isen}
\end{equation}
where $\nu > 1$ is some constant. Using these polytropic relations in the hydrostatic equation~(\ref{eq:hydro}) and performing an integration, we obtain
\begin{equation}
\frac{\nu R \Te(x)}{\nu - 1} + \phi(x) = \const
\label{eq:poly}
\end{equation}
In this case the temperature profile is determined once the potential is fixed. If $\nu = \gamma$ then we have an isentropic solution.
\subsubsection{van der Waals equation of state}
\label{sec:vdWeos}
Unlike the ideal gas which is assumed to be made of point particles, the van der Waals equation of state accounts for the volume occupied by the molecules composing the gas and the inter-molecular forces, and is useful in modeling problems with phase transitions. The equation of state is given by~\cite{VANDERWAALS1873}
\[
p = \frac{\rho R_u T}{M - \rho b} - a\left(\frac{\rho}{M}\right)^2
\]
where $R_u = 8.314 J/kgK$ is the universal gas constant, $a$ and $b$ are characteristic values for an individual gas and $M$ is its molar mass. We can write this in the form~(\ref{eq:eostheta}) where
\[
\theta = \theta(\rho,T) = \frac{R_u T}{M - \rho b} - \frac{a\rho}{M^2}
\]
If we assume a condition of polytropic van der Waals gas, i.e., the specific heat at constant volume does not change with temperature, the total energy per unit volume for the system can be written as \cite{guardone2002}
	\[
	E = \frac{\rho R_u T}{M(\gamma - 1)} + \frac{1}{2}\rho u^2  - a \left( \frac{\rho}{M} \right) ^2
	\]
The acoustic speed for a polytropic van der Waals condition is derived to be 
\[
c =\sqrt{\frac{\gamma p M + a\rho^2}{\rho(M - \rho b)} - 2a\frac{\rho}{M}}
\]
which is used in the HLLC numerical flux function. Since we may not have explicitly known hydrostatic solutions, it has to be calculated using a numerical quadrature of the hydrostatic equation.
\subsubsection{Ideal gas with radiation pressure}
The pressure exerted by radiation becomes significant inside stellar atmospheres due to the presense of high temperatures and hence it must be included in the equation of state. The equation of state is given by~\cite{Chandrasekhar1967}
\[
p = \rho R T + \frac{1}{3} aT^4
\]
where $a$ is the Stefan-Boltzmann constant. This equation can be written in the form~(\ref{eq:eostheta}) where
\[
\theta = \theta(p,T) = \frac{pRT}{p - \frac{1}{3} aT^4}
\]
and the corresponding internal energy is given by
\[
\rho \intem = \frac{\rho R T}{\gamma-1} + a T^4
\]
In this case, we may not have explictly known hydrostatic solutions available to us. The hydrostatic solution has to be computed through some numerical quadrature of the hydrostatic equation.

\section{1-D finite volume scheme}
\label{sec:fvm1d}
Let us divide the domain into $N$ finite volumes each of size $\Delta x$. The $i$'th cell is given by the interval $(x_\imh, x_\iph)$ and let $x_i = \half(x_\imh + x_\iph)$ be the cell center. Consider the semi-discrete finite volume scheme for the $i$'th cell
\begin{equation}
\dd{\con_i}{t} + \frac{\nfl_\iph - \nfl_\imh }{\Delta x} = 
\begin{bmatrix}
0 \\
s_i \\
s_i u_i \end{bmatrix}
\label{eq:semifvm}
\end{equation}
where $s_i$ is a consistent approximation of the gravitational source term and $\nfl_\iph = \nfl(\prim_\iph^L, \prim_\iph^R)$ is the numerical flux function which we write as a function of primitive variables. The scheme will be completely specified once the source term and numerical flux schemes are explained. For later use, let us define a piecewise linear interpolation $\phi_h(x)$ of the gravitational potential 
\[
\phi_h(x) = \frac{x_{i+1} - x}{x_{i+1} - x_i} \phi_i + \frac{x - x_i}{x_{i+1} - x_i} \phi_{i+1}, \qquad x \in [x_i, x_{i+1}]
\]
and let $\theta_h(x)$ be a piecewise constant function defined as
\[
\theta_h(x) = \theta_i = \theta(p_i, T_i), \qquad x \in (x_\imh, x_\iph)
\]
Motivated by the structure of the hydrostatic solution given by equation~(\ref{eq:hydrostruct}), define
\begin{equation}
\psi_h(x) = -\int^x \frac{\phi'_h(s)}{\theta_h(s)} \ud s
\label{eq:psih}
\end{equation}
where the lower limit of the integral is arbitrary and not relevant for the definition of the scheme since only differences of the function $\psi_h$ are actually used in the scheme. Note that $\psi_h(x)$ is a continuous function. This scheme differs from the scheme in \cite{Chandrashekar2015} which was specific to ideal gas so that $\theta = \theta(T) = T$ and we used a logarithmic average for the definition of $T$ in a piecewise manner over the intervals $[x_i,x_{i+1}]$. The current scheme is much simpler since it does not involve logarithmic averages and the value of $\theta$ is defined to be piecewise constant over each cell.
\subsection{Numerical flux}
The numerical flux $\nfl_\iph = \nfl(\prim_\iph^L, \prim_\iph^R)$ is computed using two states at the interface $x_\iph$ which are obtained by a reconstruction process. We will assume that the function $\nfl(\prim^L, \prim^R)$ is consistent in the sense that $\nfl(\prim,\prim) = \fl(\prim)$. Moreover, we assume the following property is also satisfied.

\noindent
{\bf Contact Property} {\em The numerical flux $\nfl$ is said to satisfy contact property if for any two states $\prim^L = [\rho^L, 0, p]$ and $\prim^R = [\rho^R, 0, p]$ we have
\[
\nfl(\prim^L,\prim^R) = [0, \ p, \ 0]^\top
\]}
The states $\prim^L$, $\prim^R$ in the above definition correspond to a stationary contact discontinuity. The above property is equivalent to the ability of a numerical flux to exactly support a stationary contact discontinuity. A few important examples of numerical fluxes which satisfy this property are the Roe flux~\cite{Roe1981} and the HLLC flux~\cite{Toro1994}.
\subsection{Reconstruction scheme}
\label{sec:recon}
To obtain the states $\prim_\iph^L, \prim_\iph^R$ at the cell boundary which are required to calculate the numerical flux $\nfl_\iph$, we will reconstruct the following set of variables
\[
\bw = \left[ \rho \ee^{-\psi_h}, \ u, \ p \ee^{-\psi_h} \right]^\top
\]
which is motivated by the structure of the hydrostatic solution as shown in equation~(\ref{eq:hydrostruct}). To perform the reconstruction at $x_\iph$, define $\psi_h(x)$ in equation~(\ref{eq:psih}) relative to $x_\iph$, and compute the quantities $\psi_i = \psi_h(x_i)$, $\psi_{i \pm 1} = \psi_h(x_{i \pm 1})$ and $\psi_{i + 2} = \psi_h(x_{i + 2})$ from the definition of $\psi_h$ as
\[
\psi_i = -\int^{x_i}_{x_\iph} \frac{\phi'_h(s)}{\theta_h(s)} \ud s = - \frac{\phi_i - \phi_\iph}{\theta(p_i, T_i)}, \quad \psi_{i+1} = -\int^{x_{i+1}}_{x_\iph} \frac{\phi'_h(s)}{\theta_h(s)} \ud s = - \frac{\phi_{i+1} - \phi_\iph}{\theta(p_{i+1}, T_{i+1})}
\]
\[
\psi_{i-1} = -\int^{x_{i-1}}_{x_\iph} \frac{\phi'_h(s)}{\theta_h(s)} \ud s = -\int^{x_{i-1}}_{x_\imh} \frac{\phi'_h(s)}{\theta_h(s)} \ud s - \int^{x_\imh}_{x_\iph} \frac{\phi'_h(s)}{\theta_h(s)} \ud s = - \frac{\phi_{i-1} - \phi_\imh}{\theta(p_{i-1}, T_{i-1})} - \frac{\phi_\imh - \phi_\iph}{\theta(p_i, T_i)}
\]
\[
\psi_{i+2} = -\int^{x_{i+2}}_{x_\iph} \frac{\phi'_h(s)}{\theta_h(s)} \ud s = -\int^{x_{i+2}}_{x_{i+\frac{3}{2}}} \frac{\phi'_h(s)}{\theta_h(s)} \ud s - \int^{x_{i+\frac{3}{2}}}_{x_\iph} \frac{\phi'_h(s)}{\theta_h(s)} \ud s = -\frac{\phi_{i+2} - \phi_{i+\frac{3}{2}}}{\theta(p_{i+2}, T_{i+2})} - \frac{\phi_{i+\frac{3}{2}} - \phi_\iph}{\theta(p_{i+1},T_{i+1})}
\]
The values of the potential at the faces are obtained by the piecewise linear interpolation so that $\phi_\iph = \half(\phi_i + \phi_{i+1})$, etc. The $\bw$ variables at the cell centers are defined as
\[
\bw_j = \left[ \rho_j \ee^{-\psi_j}, \ u_j, \ p_j \ee^{-\psi_j} \right]^\top, \qquad j=i-1,i,i+1,i+2
\]
Using these values, we can use any reconstruction scheme to obtain $\bw_\iph^L$ and $\bw_\iph^R$. E.g., the minmod scheme is given by
\begin{eqnarray*}
\bw_\iph^L &=& \bw_i + \half \minmod( \theta(\bw_i - \bw_{i-1}), (\bw_{i+1}-\bw_{i-1})/2, \theta(\bw_{i+1} - \bw_i) ) \\
\bw_\iph^R &=& \bw_{i+1} - \half \minmod( \theta(\bw_{i+1} - \bw_{i}), (\bw_{i+2}-\bw_{i+1})/2, \theta(\bw_{i+2} - \bw_{i+1}) )
\end{eqnarray*}
where $\theta \in [1,2]$ and $\minmod(\cdot,\cdot,\cdot)$ is the minmod limiter function given by
\[
\minmod(a,b,c) = \begin{cases}
s \min(|a|, |b|, |c|) & \textrm{if } s = \sign(a) = \sign(b) = \sign(c) \\
0 & \textrm{otherwise}
\end{cases}
\]
The primitive variables $\prim$ are obtained from the $\bw$ variables leading to
\[
\rho_\iph^L = \ee^{\psi_\iph} (w_1)_\iph^L, \qquad u_\iph^L = (w_2)_\iph^L, \qquad p_\iph^L = \ee^{\psi_\iph} (w_3)_\iph^L, \quad \textrm{etc.}
\]
where $\psi_\iph = \psi_h(x_\iph)$. Using these values, we can now compute the numerical flux. Note that $\psi_\iph = 0$ by definition and hence the exponential factors can be dropped when we convert to the primitive variables.
\begin{remark}
We have explained the scheme assuming that the equation of state has the form $\theta = \theta(p,T)$. In some applications, the function $\theta$ may be given in the form $\theta= \theta(\rho,T)$ and the above approach can be used in this case also. If $p$ is given as some function of $\rho,T$, then we can define $\theta(\rho,T) = p(\rho,T)/\rho$.
\end{remark}
\subsection{Approximation of source term}
The source term could be calculated in a straight-forward way by using a finite difference approximation of the potential gradient. However for achieving well balanced property, it is necessary to approximate it in a different but equivalent manner. Inside the $i$'th cell, using the pressure at the cell center, we will estimate the pressure at the faces assuming that the solution is in a hydrostatic state, i.e., using equation~(\ref{eq:hydrostruct}). Thus the local hydrostatic pressures at the faces inside the $i$'th cell are given by
\begin{equation}
\pe_\iph^L = p_i \ee^{\psi_\iph - \psi_i} = p_i \ee^{ - \frac{\phi_{i+1} - \phi_i}{2\theta_i}}, \qquad \pe_\imh^R = p_i \ee^{\psi_\imh - \psi_i} = p_i \ee^{  \frac{\phi_{i} - \phi_{i-1}}{2\theta_i}}
\label{eq:hydrop}
\end{equation}
Then the source term in the momentum equation is approximated as
\begin{equation}
s_i = \frac{\pe_\iph^L - \pe_\imh^R}{\Delta x}
\label{eq:src}
\end{equation}
In the next theorem we show that this is a consistent approximation. The source term discretization makes use of solution values only within the cell in contrast to the scheme used in~\cite{Chandrashekar2015}. This scheme is hence expected to be better for discontinuous hydrostatic solutions, e.g., where the density is discontinuous.
\begin{theorem}
The source term discretization given by (\ref{eq:hydrop}), (\ref{eq:src}) is second order accurate.
\end{theorem}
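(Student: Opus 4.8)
\emph{Proof proposal.} The plan is to substitute the exact (smooth) solution into the formula for $s_i$ and Taylor expand about the cell centre $x_i$, showing that $s_i$ reproduces the exact source $s(x_i) = -\rho(x_i)\phi'(x_i)$ with an error of size $O(\Delta x^2)$. Throughout I assume $\phi$ and the flow variables are smooth enough (say $C^3$) near $x_i$, and I use that the equation of state~(\ref{eq:eostheta}) gives $\theta_i = \theta(p_i,T_i) = p_i/\rho_i$.

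First I would set $a := (\phi_{i+1}-\phi_i)/(2\theta_i)$ and $b := (\phi_i-\phi_{i-1})/(2\theta_i)$, so that by~(\ref{eq:hydrop}) and~(\ref{eq:src}),
\[
s_i \;=\; \frac{\pe_\iph^L - \pe_\imh^R}{\Delta x} \;=\; \frac{p_i}{\Delta x}\bigl(\ee^{-a} - \ee^{b}\bigr).
\]
Taylor expanding $\phi$ about $x_i$ gives $\phi_{i\pm1} = \phi_i \pm \phi_i'\Delta x + \frac12\phi_i''\Delta x^2 \pm \frac16\phi_i'''\Delta x^3 + O(\Delta x^4)$, and hence
\[
a+b \;=\; \frac{\phi_{i+1}-\phi_{i-1}}{2\theta_i} \;=\; \frac{\phi_i'}{\theta_i}\Delta x + O(\Delta x^3), \qquad
a-b \;=\; \frac{\phi_{i+1}-2\phi_i+\phi_{i-1}}{2\theta_i} \;=\; \frac{\phi_i''}{2\theta_i}\Delta x^2 + O(\Delta x^4).
\]
Thus $a$ and $b$ are each $O(\Delta x)$, but their difference is one order smaller, $O(\Delta x^2)$.

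The key step is to expand the exponential difference:
\[
\ee^{-a} - \ee^{b} \;=\; -(a+b) + \frac12\bigl(a^2 - b^2\bigr) - \frac16\bigl(a^3 + b^3\bigr) + O(\Delta x^4).
\]
Now $a^2-b^2 = (a-b)(a+b) = O(\Delta x^2)\,O(\Delta x) = O(\Delta x^3)$ and $a^3+b^3 = O(\Delta x^3)$, so every term past the linear one is $O(\Delta x^3)$; therefore $\ee^{-a}-\ee^{b} = -(a+b) + O(\Delta x^3) = -(\phi_i'/\theta_i)\Delta x + O(\Delta x^3)$. Dividing by $\Delta x$ and using $p_i/\theta_i = \rho_i$ yields
\[
s_i \;=\; -\rho_i \phi_i' + O(\Delta x^2) \;=\; s(x_i) + O(\Delta x^2),
\]
which proves second-order accuracy; consistency ($s_i \to s(x_i)$ as $\Delta x \to 0$) is immediate.

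I do not expect a genuine difficulty here; the one point that needs care is the cancellation that upgrades the naive $O(\Delta x)$ error of the exponential expansion to $O(\Delta x^2)$, which relies precisely on the symmetric placement of the two one-sided hydrostatic extrapolations about $x_i$ and on $a-b$ being of higher order than $a+b$. If one prefers to measure the truncation error against the cell average of the source over cell $i$ rather than the point value $s(x_i)$, the conclusion is unchanged, since the midpoint value $-\rho_i\phi_i'$ approximates that average to second order as well.
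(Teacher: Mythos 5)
Your proposal is correct and follows essentially the same route as the paper: Taylor expand the two one-sided exponentials about $x_i$, observe that the quadratic terms cancel by symmetry so the difference is $-(\phi_i'/\theta_i)\Delta x + O(\Delta x^3)$, and use $p_i/\theta_i=\rho_i$ to conclude. Your bookkeeping via $a\pm b$ and the factorization $a^2-b^2=(a-b)(a+b)$ is just a tidier way of exhibiting the same cancellation the paper writes out term by term.
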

\begin{proof}
The source term discretization is given by
\[
s_i = p_i \frac{\exp\left(\frac{\phi_i - \phi_{i+1}}{2\theta_i}\right) - \exp\left(\frac{\phi_i - \phi_{i-1}}{2\theta_i}\right)}{\Delta x}
\]
Performing a Taylor expansion of the potential around $x_i$ we get
\begin{eqnarray*}
\ee^{\frac{\phi_{i} - \phi_{i+1}}{2\theta_i}} - \ee^{\frac{\phi_{i} - \phi_{i-1}}{2\theta_i}} 
&=& \ee^{\frac{1}{2\theta_i}(-\phi'_i \Delta x - \phi''_i \Delta x^2 + O(\Delta x^3))} - \ee^{\frac{1}{2\theta_i}(+\phi'_i \Delta x - \phi''_i \Delta x^2 + O(\Delta x^3))} \\
&=& \left[ 1 + \frac{1}{2\theta_i}(-\phi'_i \Delta x - \phi''_i \Delta x^2) + \frac{1}{2(2\theta_i)^2} (\phi'_i \Delta x)^2 + O(\Delta x^3) \right] \\
&& - \left[ 1 + \frac{1}{2\theta_i}(\phi'_i \Delta x - \phi''_i \Delta x^2) + \frac{1}{2(2\theta_i)^2} (\phi'_i \Delta x)^2 + O(\Delta x^3) \right] \\
&=& -\frac{1}{\theta_i} \phi'(x_i) \Delta x + O(\Delta x^3)
\end{eqnarray*}
Therefore
\[
s_i = -\frac{p_i}{\theta_i} \phi'(x_i) + O(\Delta x^2) = -\rho_i \phi'(x_i) + O(\Delta x^2)
\]
Hence the source term discretization is second order accurate.
\end{proof}
\subsection{Well-balanced property}
We now state the basic result on the well-balanced property. We essentially show that the finite volume schemes admit a stationary solution which is a discrete analogue of~(\ref{eq:hydrostruct}).
\begin{theorem}
The finite volume scheme (\ref{eq:semifvm}) together with a numerical flux which satisfies contact property and reconstruction of $\bw$ variables is well-balanced in the sense that the initial condition given by
\begin{equation}
u_i = 0, \qquad p_i \exp(-\psi_i) = \const, \qquad \forall \ i
\label{eq:ic}
\end{equation}
is preserved by the numerical scheme.
\end{theorem}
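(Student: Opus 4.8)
The plan is to show that, when the data satisfy~(\ref{eq:ic}), the right-hand side of~(\ref{eq:semifvm}) minus the flux difference vanishes identically, so that $\dd{\con_i}{t}=0$ for every $i$. The argument splits into two essentially independent parts: (i) evaluating the numerical fluxes $\nfl_\iph$ from the reconstructed interface states and showing that each one is a pure pressure flux $[0,\ p_\iph,\ 0]^\top$; and (ii) checking that the source term~(\ref{eq:src}) telescopes exactly against the resulting momentum flux difference.

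For part (i), I would first observe that under~(\ref{eq:ic}) the reconstructed $\bw$-variables collapse. The middle component is $u\equiv 0$, so any limiter returns $(w_2)_\iph^L=(w_2)_\iph^R=0$. The third component is $w_3 = p\,\ee^{-\psi_h}$, and this is precisely the quantity that~(\ref{eq:ic}) forces to be constant over the reconstruction stencil $\{i-1,i,i+1,i+2\}$, so the limiter returns the same constant on both sides of $x_\iph$; converting back to primitive variables multiplies both sides by the common factor $\ee^{\psi_\iph}$, giving $p_\iph^L=p_\iph^R=:p_\iph$. (The density component $w_1$ need not reconstruct continuously, but that is irrelevant.) Hence at every interface the two states have the form $[\rho^L,0,p_\iph]$, $[\rho^R,0,p_\iph]$, a stationary contact, and the \textbf{Contact Property} yields $\nfl_\iph=[0,\ p_\iph,\ 0]^\top$. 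The mass and energy equations are then immediate: the first and third flux components are identically zero across all faces, and the energy source $s_i u_i$ vanishes since $u_i=0$, so $\dd{\rho_i}{t}=\dd{E_i}{t}=0$.

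For part (ii), the momentum equation requires $p_\iph-p_\imh=\Delta x\, s_i = \pe_\iph^L-\pe_\imh^R$, and the plan is to match the reconstruction-side face pressures with the source-side local hydrostatic pressures of~(\ref{eq:hydrop}) term by term. Using that $w_3$ is constant over the stencil, that $\psi_\iph=0$ by the normalization chosen for the reconstruction at $x_\iph$, and that on cell $i$ the integrand in~(\ref{eq:psih}) is $\phi_h'/\theta_i$ with $\phi_h$ the piecewise-linear interpolant (so $\phi_i-\phi_\iph=\half(\phi_i-\phi_{i+1})$), one obtains $p_\iph = p_i\,\ee^{(\phi_i-\phi_{i+1})/(2\theta_i)} = \pe_\iph^L$, and symmetrically, from the reconstruction at $x_\imh$, $p_\imh = p_i\,\ee^{(\phi_i-\phi_{i-1})/(2\theta_i)} = \pe_\imh^R$. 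Substituting into the momentum component of~(\ref{eq:semifvm}) cancels the flux difference against the source, so $\dd{(\rho u)_i}{t}=0$, completing the proof.

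The \textbf{main obstacle} is the bookkeeping around the arbitrary lower limit in the definition~(\ref{eq:psih}) of $\psi_h$: the reconstruction at each interface uses $\psi_h$ normalized so that $\psi_\iph=0$, a different normalization at each face, whereas~(\ref{eq:ic}) is stated for one fixed normalization. One must verify that, because only differences $\psi_j-\psi_k$ ever enter, both the condition ``$p\,\ee^{-\psi}=\const$'' and the constancy of the reconstructed $w_3$ are independent of this choice, and then that the telescoped values of $\psi_i,\psi_{i\pm1}$ obtained from the piecewise-linear $\phi_h$ and piecewise-constant $\theta_h$ reproduce exactly the exponents appearing in~(\ref{eq:hydrop}). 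This consistency between the reconstruction and the source discretization is the heart of the well-balanced design; everything else is routine substitution.
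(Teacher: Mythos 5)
Your proposal is correct and follows essentially the same route as the paper's proof: constancy of the second and third components of $\bw$ under~(\ref{eq:ic}) yields equal interface velocities and pressures, the contact property then reduces every numerical flux to $[0,\ p_\iph,\ 0]^\top$, and the momentum flux difference cancels exactly against the source term~(\ref{eq:src}) because $\pe_\iph^L = p_\iph$ and $\pe_\imh^R = p_\imh$. Your explicit verification that $p_\iph = p_i\,\ee^{(\phi_i-\phi_{i+1})/(2\theta_i)} = \pe_\iph^L$ and your remark on the invariance of the condition $p\,\ee^{-\psi}=\const$ under the choice of lower limit in~(\ref{eq:psih}) are points the paper handles implicitly (via the continuity of $\psi_h$ and the identity $\psi_\iph=0$), so you have simply made the same argument slightly more explicit.
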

\begin{proof}
Let us start the computations with an initial condition that satisfies~(\ref{eq:ic}).  Since we reconstruct the variables $\bw$ and by our assumption the second and third components are constant, at any interface $\iph$ we have
\[
(w_2)_\iph^L = (w_2)_\iph^R = 0, \qquad (w_3)_\iph^L = (w_3)_\iph^R = p_i \exp(-\psi_i)
\]
and hence, since $\psi_h(x)$ is a continuous function, we get
\[
u_\iph^L = u_\iph^R = 0, \qquad p_\iph^L = p_\iph^R = p_i \exp(\psi_\iph - \psi_i) =: p_\iph
\]
and similarly at $\imh$
\[
u_\imh^L = u_\imh^R = 0, \qquad p_\imh^L = p_\imh^R = p_i \exp(\psi_\imh - \psi_i) =: p_\imh
\]
The above equations together with~(\ref{eq:hydrop}) imply that $\pe_\iph^L = p_\iph$ and $\pe_\imh^R = p_\imh$. In general, the density may not be continuous, i.e., $\rho_\iph^L \ne \rho_\iph^R$. But since the numerical flux satisfies contact property, we have $\nfl_\imh = [0, p_\imh, 0]^\top$ and $\nfl_\iph = [0, p_\iph, 0]^\top$. 
The flux in mass and energy equations are zero and the gravitational source term in the energy equation is also zero. Hence the mass and energy equations are already well balanced, i.e., $\dd{\con_i^{(1)}}{t}=0$ and $\dd{\con_i^{(3)}}{t}=0$. 
 It remains to check the momentum equation. On the left of the momentum equation, we have
\[
\frac{\nfl_\iph^{(2)} - \nfl_\imh^{(2)} }{\Delta x}  = \frac{p_\iph - p_\imh}{\Delta x} 
\]
while on the right, the source term takes the form
\[
s_i = \frac{\pe_\iph^L - \pe_\imh^R}{\Delta x}
 = \frac{p_\iph - p_\imh}{\Delta x}
\]
and hence $\dd{\con_i^{(2)}}{t} = 0$.  This proves that the initial condition satisfying~(\ref{eq:ic}) is preserved under any time integration scheme.
\end{proof}
\begin{remark}
Note that the scheme preserves a state that satisfies the conditions given in~(\ref{eq:ic}). The exact hydrostatic solution may not satisfy these conditions since the quantity $\psi_i$ is only known approximately in general. See the next theorem for a special case where we obtain exact well-balanced property. In general, the scheme well-balances an approximate hydrostatic solution which is second order accurate approximation to the true hydrostatic solution. We provide some theoretical justification in the Appendix for why such a scheme can still give good approximations.
\end{remark}
\begin{remark}
It is possible to reconstruct density and still retain the result of the previous theorem. In the isothermal case, the quantity $\rho \ee^{-\psi}$ is constant and we can expect the reconstruction of density to be more accurate if we scale the density as in the $\bw$ variables.
\end{remark}

\begin{definition}[Exact hydrostatic solution]
\label{def:exhydro}
Let $\pe(x)$, $\rhoe(x)$ be a hydrostatic solution which is available in closed form expression. Then the interpolation of this solution on the grid
\[
\pe_i = \pe(x_i), \qquad \rhoe_i = \rhoe(x_i)
\]
will be refered to as the exact hydrostatic solution.
\end{definition}
\begin{remark}
The above theorem says that the finite volume scheme admits hydrostatic solutions. This does not mean that it can preserve every exact hydrostatic solution. The next theorem shows that the finite volume scheme is well-balanced for the exact hydrostatic solution in a special situation.
\end{remark}
\begin{theorem}
\label{thm:iso}
Any hydrostatic solution for ideal gas EOS which is isothermal is exactly preserved by the finite volume scheme (\ref{eq:semifvm}).
\end{theorem}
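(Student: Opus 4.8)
The plan is to reduce this statement to the general well-balanced theorem proved above by showing that, in the isothermal ideal-gas case, the discrete exponent $\psi_i$ appearing in condition~(\ref{eq:ic}) coincides \emph{exactly} (up to an irrelevant additive constant) with $-\phi_i/(RT_0)$, so that the grid interpolation of the exact isothermal hydrostatic solution in the sense of Definition~\ref{def:exhydro} satisfies~(\ref{eq:ic}) verbatim.

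First I would take as initial data $\pe_i=\pe(x_i)$, $\rhoe_i=\rhoe(x_i)$, $\ue_i=0$, with $\pe,\rhoe$ the exact isothermal hydrostatic solution, and note that the temperature recovered from the ideal-gas law $p=\rho RT$ in cell $i$ is $T_i=\pe_i/(R\rhoe_i)=T_0$, since the exact solution is isothermal. Hence $\theta_i=RT_i=RT_0$ in every cell, i.e. the piecewise-constant function $\theta_h$ is actually a single global constant $RT_0$. Substituting this into the definition~(\ref{eq:psih}) gives $\psi_h(x)=-\phi_h(x)/(RT_0)+\const$, and since the piecewise-linear interpolant $\phi_h$ reproduces $\phi$ at the cell centres, $\psi_i=\psi_h(x_i)=-\phi_i/(RT_0)+\const$ for all $i$.

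Next I would invoke the closed-form isothermal relation~(\ref{eq:isot}), $\pe(x)\exp(\phi(x)/(RT_0))=p_0=\const$, evaluated at $x=x_i$, to write $\pe_i=p_0\exp(-\phi_i/(RT_0))$. Combining this with the previous step,
\[
\pe_i\exp(-\psi_i)=p_0\exp\!\left(-\frac{\phi_i}{RT_0}\right)\exp\!\left(\frac{\phi_i}{RT_0}-\const\right)=\const,
\]
independently of $i$. Together with $\ue_i=0$ this is exactly condition~(\ref{eq:ic}), so by the well-balanced theorem above the scheme~(\ref{eq:semifvm}) keeps this state stationary for all time, which is the assertion.

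I do not anticipate a genuine obstacle; the only delicate point is the claim that the discrete $\psi_i$ is \emph{exact} here, which rests on two facts special to this setting: isothermality of the ideal gas makes $\theta_h$ a true global constant, so the piecewise-constant approximation of $\theta$ introduces no error, and $\phi_h$ reproduces $\phi$ at the nodes $x_i$. Note that $\phi_h$ is not exact between nodes, so $\psi_h$ still differs from the exact exponent in~(\ref{eq:hydrostruct}); but only the nodal values $\psi_i$ enter~(\ref{eq:ic}), which is all that is needed. For a non-isothermal or non-ideal equation of state the cell values $\theta_i$ vary from cell to cell, $\theta_h$ is genuinely only an approximation, and $\psi_i$ becomes merely a second-order-accurate surrogate, which is why exact well-balancing of the true hydrostatic solution is lost in general.
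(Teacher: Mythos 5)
Your proposal is correct and follows essentially the same route as the paper's proof: you verify that the nodal interpolation of the exact isothermal solution satisfies condition~(\ref{eq:ic}) by observing that isothermality forces $\theta_i = RT_0$ to be a global constant, hence $\psi_i = -\phi_i/(RT_0)$ up to an additive constant, and then invoke the isothermal relation~(\ref{eq:isot}) together with the general well-balanced theorem. The paper phrases the same computation as showing the ratio $p_{i+1}\ee^{-\psi_{i+1}}/(p_i\ee^{-\psi_i})=1$, which is equivalent to your argument.
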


\begin{proof}
Assume that the initial condition is taken to be the interpolation of an isothermal hydrostatic solution as in definition~(\ref{def:exhydro}). We have to verify that the initial condition satisfes equation~(\ref{eq:ic}). If the initial condition is isothermal, then $T_i = T_0 = \const$, we obtain
\[
\frac{p_{i+1} \ee^{-\psi_{i+1}}}{p_i \ee^{-\psi_i}} = \frac{p_{i+1}}{p_i} \ee^{\psi_{i} - \psi_{i+1}} = \frac{p_{i+1}}{p_i} \exp\left( \frac{\phi_{i+1} - \phi_{i}}{RT_0} \right) = \frac{p_{i+1} \exp(\phi_{i+1}/RT_0)}{p_i \exp(\phi_i/RT_0)} = 1
\]
where the last equality follows from~(\ref{eq:isot}).
\end{proof}

\subsection{Computation of discrete hydrostatic solution}
\label{sec:dischydro}
The new scheme is well-balanced at a state that satisfies conditions~(\ref{eq:ic}) which is essentially a numerical quadrature of the hydrostatic equation~(\ref{eq:hydro}). Let us discuss how to compute such a solution. We will assume that the equilibrium temperature $\Te(x)$ and the corresponding potential $\phi(x)$ are known at the grid points.  We will use equation~(\ref{eq:ic}) to determine the pressure and density at all the grid points. The hydrostatic equation is a first order ordinary differential equation. Hence it requires an initial condition; assume that we are given the pressure $\phe_1$ at the first grid point.
If the pressure at $i-1$ is known then the pressure at $i$ is given by
\[
\phe_{i-1} \ee^{-\psi_{i-1}} = \phe_i \ee^{-\psi_i}, \quad \mbox{i.e.,} \quad \phe_i = \phe_{i-1} \ee^{\psi_i - \psi_{i-1}}
\]
By the definition of $\psi_i = \psi_h(x_i)$, we get
\begin{eqnarray*}
\psi_i - \psi_{i-1} &=& -\int_{x_{i-1}}^{x_\imh} \frac{\phi'_h(s)}{\theta(\phe_{i-1},\Te_{i-1})} \ud s - \int_{x_\imh}^{x_i} \frac{\phi'_h(s)}{\theta(\phe_i,\Te_i)} \ud s \\
&=& - \frac{1}{\theta(\phe_{i-1},\Te_{i-1})} \int_{x_{i-1}}^{x_\imh} \phi'_h(s) \ud s - \frac{1}{\theta(\phe_i,\Te_i)} \int_{x_\imh}^{x_i} \phi'_h(s) \ud s \\
&=& - \frac{1}{\theta(\phe_{i-1},\Te_{i-1})} (\phi_\imh - \phi_i) - \frac{1}{\theta(\phe_i,\Te_i)} (\phi_i - \phi_\imh) \\
&=& -\frac{1}{2} (\phi_i - \phi_{i-1}) \left( \frac{1}{\theta(\phe_{i-1},\Te_{i-1})} + \frac{1}{\theta(\phe_i,\Te_i)} \right) \qquad \textrm{using} \quad \phi_\imh = \half (\phi_{i-1} + \phi_i)
\end{eqnarray*}
Define
\[
f(p) = p - \phe_{i-1} \exp\left[ -\frac{1}{2} (\phi_i - \phi_{i-1}) \left( \frac{1}{\theta(\phe_{i-1},\Te_{i-1})} + \frac{1}{\theta(p,\Te_i)} \right) \right]
\]
The pressure $\phe_i$ is the root of the function $f$. The root finding problem is solved using a Newton method for which a good initial guess for the root is given by $\phe_{i-1}$. Once the pressure has been computed the density is obtained from the equation of state $\rhohe_i = \phe_i/\theta(\phe_i, \Te_i)$ where $\Te_i = \Te(x_i)$. For equations of state with $\theta$ = $\theta(\rho, T)$ such as the van der Waals equation, we define a root-finding problem for which the root is the density $\tilde{\rho_i}$ of the function
	\[
	f(\rho) = p(\rho, \bar{T_i}) - \widetilde{p}_{i-1}\exp\left[ -\frac{1}{2}(\phi_i - \phi_{i-1}) \left( \frac{1}{\theta(\widetilde{\rho}_{i-1}, \bar{T}_{i-1})} + \frac{1}{\theta(\rho, \bar{T_{i}})}\right) \right]
	\] \label{eqn:frho}
	The pressure can be computed using $\widetilde{p}_{i}$ = $\widetilde{\rho}_{i} \theta(\widetilde{\rho}_{i}, \bar{T}_i)$.
\begin{definition}[Discrete hydrostatic solution]
The set of grid point values $\phe_i$, $\rhohe_i$ obtained from the procedure described in the previous section will be refered to as discrete hydrostatic solution.
\end{definition}
\begin{remark}
It is  not necessary to compute the discrete hydrostatic solution in practical computations. We will use this only to demonstrate numerically that the scheme is well-balanced if we start the computations using the discrete hydrostatic solution as the initial condition.
\end{remark}
\begin{remark}
For the ideal gas model and an isothermal hydrostatic solution, theorem~(\ref{thm:iso}) shows that $\phe_i = \pe_i$ and $\rhohe_i = \rhoe_i$.
\end{remark}
\section{Two dimensional scheme}
\label{sec:fvm2d}
The two dimensional Euler equations with gravity are a system of four balance laws which can be written as
\[
\df{\con}{t} + \df{\fl}{x} + \df{\gl}{y} = \bs
\]
where $\con=[\rho, \ \rho u, \ \rho v, \ E]^\top$ is the set of conserved variables and $\fl = [\rho u, \ p + \rho u^2, \ \rho u v, \ (E+p)u]^\top$, $\gl = [\rho v, \ \rho u v, \ p + \rho v^2, \ (E+p)v]^\top$ are the corresponding flux vector in Cartesian coordinates.  Here $\rho$ is the density, $(u,v)$ is the velocity, $p$ is the pressure, $E$ is the energy per unit volume excluding the gravitational energy and $\phi$ is the gravitational potential. The  energy $E$ is given by
\[
E = \rho \intem + \half \rho (u^2 + v^2)
\]
where $\intem$ is the internal energy per unit mass. The source term is $\bs = [0, \ \alpha, \ \beta, \ u\alpha + v\beta]^\top$ where $\alpha = -\rho\df{\phi}{x}$ and $\beta=-\rho\df{\phi}{y}$. We also define the set of primitive variables by $\prim=[\rho, \ u, \ v, \ p]^\top$. The hydrostatic solution satisfies the equation
\[
\nabla\pe = - \rhoe \nabla\phi
\]
Similar to the one dimensional case, define
\[
\psi(x,y) = -\int^{(x,y)} \frac{\nabla \phi(x',y') \cdot \ud \bm{r}'}{\theta(x',y')}
\]
which is independent of the path if we are at hydrostatic solution. Moreover the hydrostatic solution satisfies
\[
\pe(x,y) \ee^{-\psi(x,y)} = \const
\]
In the finite volume scheme, we will choose a path along $x$ or $y$ axis depending on the flux that is being evaluated. For example, for the computation of the $x$ component of the flux, the reconstruction is made using a 1-D scheme along the $x$ direction and the path is parallel to the $x$-axis. The discretization of the balance law on Cartesian mesh is obtained in a straight-forward way by using the one-dimensional scheme along the two coordinate directions, and hence we give only a brief description of the 2-D scheme.
\subsection{Finite volume scheme}
\begin{figure}
\begin{center}
\begin{tabular}{cc}
\includegraphics[width=0.48\textwidth]{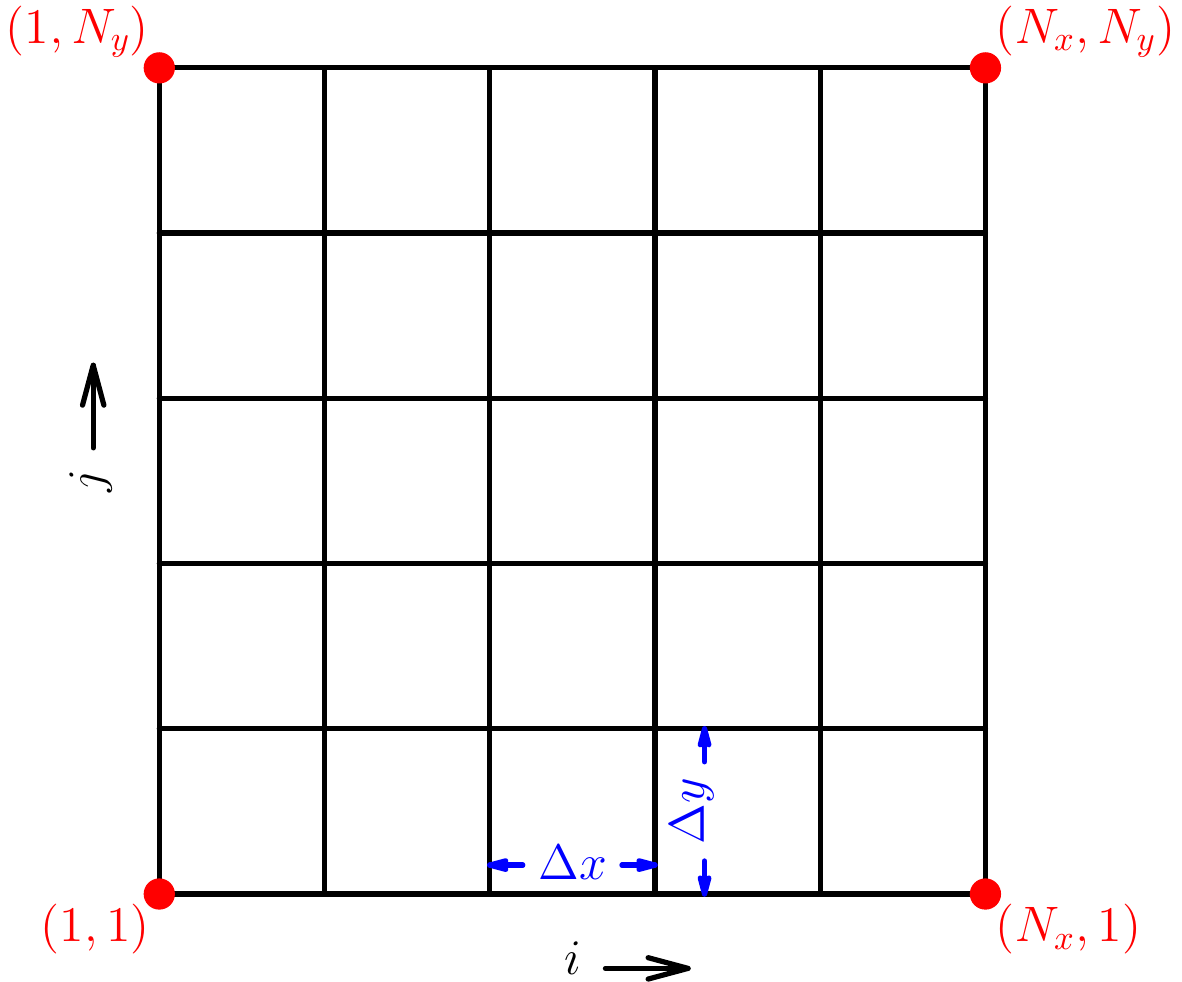} &
\includegraphics[width=0.45\textwidth]{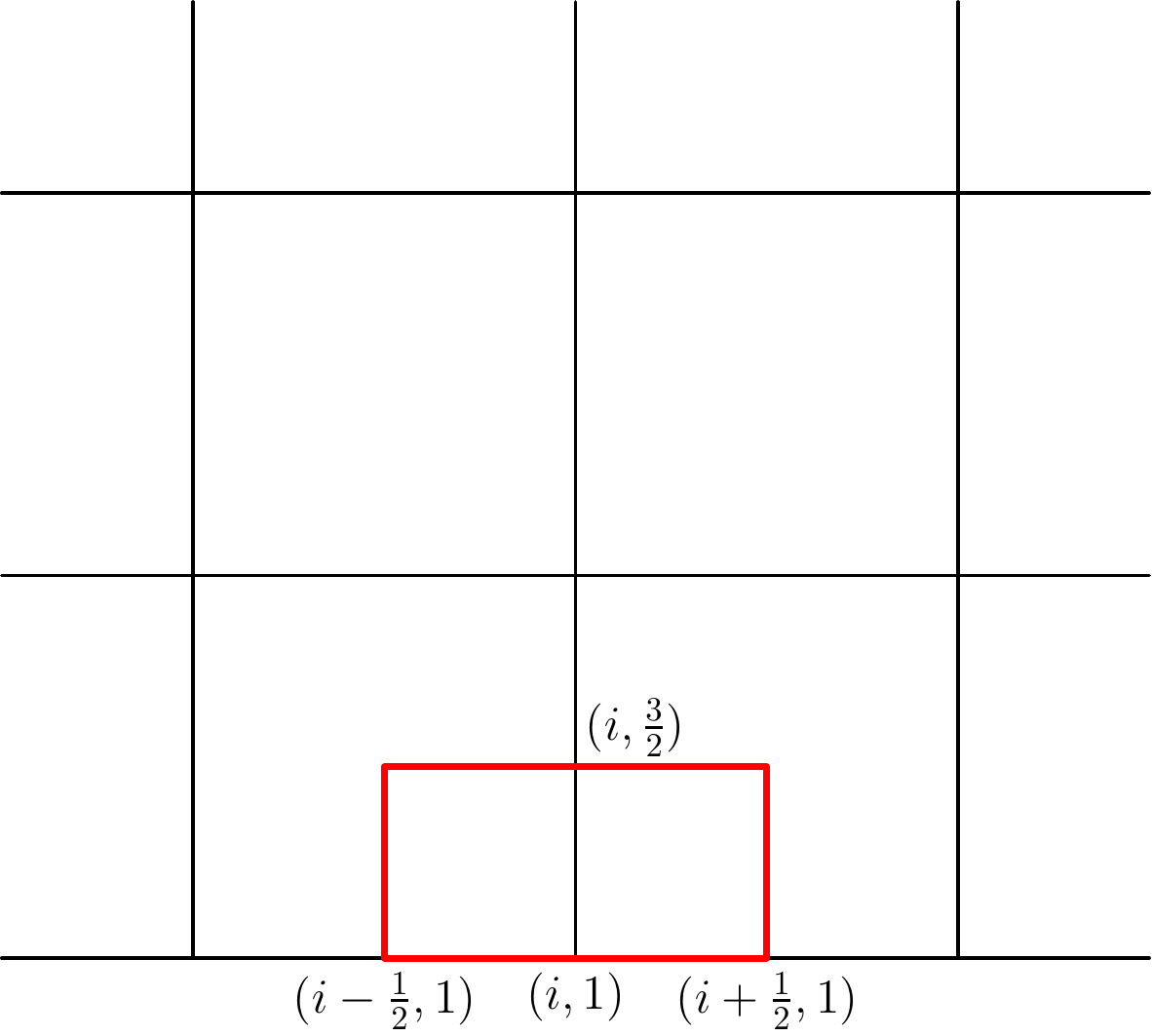} \\
(a) & (b)
\end{tabular}
\caption{Schematic of 2-D grid: (a) point indexing, (b) half cell on bottom boundary.}
\label{fig:grid}
\end{center}
\end{figure}
Consider a partition of the computational domain $\Omega = [x_{min},x_{max}] \times [y_{min}, y_{max}]$ by a Cartesian mesh consisting of $\nx$ and $\ny$ number of points along the coordinate directions, see figure~(\ref{fig:grid}a). The coordinates of the points are given by
\[
x_i = x_{min} + (i-1)\Delta x, \quad 1 \le i \le \nx, \qquad y_j = y_{min} + (j-1)\Delta y, \quad 1 \le j \le \ny
\]
where $\Delta x = (x_{max} - x_{min})/(\nx - 1)$ and $\Delta y = (y_{max} - y_{min})/(\ny - 1)$ are the cell sizes. Note that we have grid points located on the boundary of the computational domain which is commonly refered to as a {\em cell-vertex scheme}. Around each grid point $(i,j)$, we construct a cell $\Omega_{ij} = [x_\imh, x_\iph] \times [y_\jmh, y_\jph]$ where $x_{i \pm \half} = x_i \pm \half \Delta x$ and $y_{j \pm \half} = y_j \pm \half \Delta y$. For the points on a solid wall boundary, the cells are only half the size. For e.g., on the bottom boundary for which $j=1$, the cells are defined by $\Omega_{i,1} = [x_\imh, x_\iph] \times [y_{min},y_{min}+\half\Delta y]$ (see figure~(\ref{fig:grid}b))  whereas on the left boundary for which $i=1$, the cells are defined as $\Omega_{1,j} = [x_{min}, x_{min}+\half\Delta x] \times [y_\jmh, y_\jph]$. If we are using periodic or transmissive boundary condition in some direction, then the cells on the boundary of the corresponding direction are of full size.

The semi-discrete finite volume scheme is given by
\begin{equation}
\label{eq:fvint}
\dd{\con_{i,j}}{t} + \frac{\nfl_{\iph,j} - \nfl_{\imh,j}}{\Delta x} + \frac{\ngl_{i,\jph} - \ngl_{i,\jmh}}{\Delta y}= \bs_{i,j}
\end{equation}
for all interior cells. The source term is computed as $\bs_{i,j} = [0, \ \alpha_{i,j}, \ \beta_{i,j}, \ u_{i,j}\alpha_{i,j} + v_{i,j}\beta_{i,j}]^\top$ where
\[
\alpha_{i,j} = \frac{\pe_{\iph,j}^L - \pe_{\imh,j}^R}{\Delta x}, \qquad \beta_{i,j} = \frac{\pe_{i,\jph}^L - \pe_{i,\jmh}^R}{\Delta y}
\]
where the hydrostatic pressures $\pe_{\iph,j}^L$, etc., are computed by applying the one-dimensional scheme along each of the coordinate directions. If the bottom boundary is a solid wall, then the cells are only half the size as shown in figure~(\ref{fig:grid}b) and the semi-discrete finite volume scheme for cells on this boundary is of the form
\begin{equation}
\label{eq:fvwall}
\dd{\con_{i,1}}{t} + \frac{\nfl_{\iph,1} - \nfl_{\imh,1}}{\Delta x} + \frac{\ngl_{i,\ohalf} - \ngl_{i,1}}{\half\Delta y}= \bs_{i,1}
\end{equation}
The source term is $\bs_{i,1} = [0, \ \alpha_{i,1}, \ \beta_{i,1}, \ u_{i,1}\alpha_{i,1}]^\top$ where
\[
\alpha_{i,1} = \frac{\pe_{\iph,1}^L - \pe_{\imh,1}^R}{\Delta x}, \qquad \beta_{i,1} = \frac{\pe_{i,\ohalf}^L - p_{i,1}}{\half\Delta y}
\]
The computation of the wall pressure and fluxes are explained in a later section. The reconstruction is performed in terms of the $\bw$ variables which are defined as
\[
\bw = [\rho \ee^{-\psi_h}, \ u, \ v, \ p \ee^{-\psi_h} ]^\top
\]
where $\psi_h$ is defined as
\[
\psi_h(x,y) = - \int^{(x,y)} \frac{\nabla \phi_h(x',y') \cdot \ud \bm{r}'}{\theta_h(x',y')}
\]
In the above equation, $\phi_h$ is a piecewise linear interpolant and $\theta_h$ is a piecewise constant interpolant. Note that the path we choose is  along $x$ axis to compute the reconstruction at $(\iph,j)$ and along $y$ axis to compute the reconstruction at $(i,\jph)$. We will make use of one-dimensional piecewise linear interpolants $\phi_h$ and piecewise constant interpolant $\theta_h$ as in the 1-D case described before.
\subsection{Boundary conditions}
In a cell-vertex finite volume approach, in order to reconstruct the variables near the domain boundary using the scheme described in section~(4.2), we have to specify a maximum of two values outside the computational domain using the ghost cell approach. In the test cases considered, we use three types of boundary conditions; namely periodic, solid wall and transmissive boundary conditions. The treatment of each boundary condition in a vertex-centered finite-volume setting is discussed in the following subsections.

\subsubsection{Periodic boundary condition}
The values of conservative variables at the two ghost points are obtained by copying the values from the periodic locations.  For example, if the solution is periodic along the $x$ axis, the ghost cell values are given by
\begin{eqnarray*}
\con_{i,j} &=& \con_{\nx- i - 1,j}, \qquad i=-1,0, \qquad 1 \le j \le \ny \\
\con_{i,j} &=& \con_{i+1-\nx,j}, \qquad i = \nx+1,\nx+2, \qquad 1 \le j \le \ny
\end{eqnarray*}
Then the finite volume scheme~(\ref{eq:fvint}) is applied to points located on the periodic boundaries also.


\subsubsection{Solid wall boundary condition}
Boundary treatment for a solid wall involves the enforcement of a no-penetration condition i.e. normal component of velocity should be zero at the boundary.  Cells located on a solid wall have only half the size of interior cells. For example, if the bottom boundary is a solid wall, then the finite volume scheme is given by~(\ref{eq:fvwall}) and the flux on the wall is given by
\[
\ngl_{i,1} = [0, \ 0, \ p_{i,1}, \ 0]^\top
\]
The pressure on the wall is computed as
\[
p_{i,1} = (\gamma-1)\left( E_{i,1} - \half \rho_{i,1} u_{i,1}^2 \right)
\]
To compute the flux $\ngl_{i,\ohalf}$ we need to define the two states at this face. The $\bw$ variables at the wall point are given by
\[
\bw_{i,1} = \left[ \rho _{i,1} \ee^{-\psi_{i,1}},  \ u_{i,1} ,  \  0, \ p_{i,1} \ee^{-\psi_{i,1}} \right]^\top \qquad 1 \le i \le \nx
\]
where $\psi_{i,1}$ is defined with respect to the face $(i,\ohalf)$. The two states at this face are defined as
\[
\bw_{i,\ohalf}^L = \bw_{i,1}, \qquad \bw_{i,\ohalf}^R = \bw_{i,2} - \half \minmod( \theta(\bw_{i,2} - \bw_{i,1}), (\bw_{i,3}-\bw_{i,1})/2, \theta(\bw_{i,3} - \bw_{i,2}) )
\]
The reconstructed $\bw$ variables are converted to primitive variables which are used to compute the numerical flux.

\subsubsection{Transmissive boundary conditions}
The treatment for transmissive condition is done by zero-order extrapolation of $\bw$ variables from the domain boundary point to the centre of the two ghost cells.  For example, if the left boundary is a transmissive boundary, the values of $\bw$ variables are obtained as
\[
\bw_{i,j} = \bw_{1,j}, \qquad i=-1,0, \qquad 1 \le j \le \ny 
\]
If the bottom boundary is a transmissive boundary, then $\bw$ variables in the ghost cells are given by
\[
\bw_{i,j} = \bw_{i,1}, \qquad j=-1,0, \qquad 1 \le i \le \nx 
\]
Once the ghost values are obtained, the cells on the transmissive boundary are updated like interior cells.

\begin{theorem}
Consider the 2-D finite volume scheme together with the boundary conditions. Assume that the gravity is acting along the $y$ axis and the initial condition is a function of $y$ coordinate only. If the initial condition satisfies
\[
u_{i,j} = \const, \qquad v_{i,j} = 0, \qquad p_{i,j} \exp(-\psi_{i,j}) = \const
\]
then it is exactly preserved by the numerical scheme.
\end{theorem}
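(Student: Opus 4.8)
The plan is to reduce the claim to the one-dimensional well-balanced theorem applied along the $y$ axis, after showing that the $x$ sweep is trivial. Write $U$ for the common value of $u_{i,j}$. Since gravity acts along $y$, the potential depends on $y$ only, so its piecewise-linear interpolant satisfies $\partial_x\phi_h\equiv 0$; as the data (hence $\theta_h$) is a function of $y$ only, the line integral defining $\psi_h$ is path independent and $\psi_h=\psi_h(y)$. In particular $\psi_{i,j}$ depends only on $j$, so the hypothesis $p_{i,j}\exp(-\psi_{i,j})=\const$ forces $p_{i,j}$, and therefore $\rho_{i,j}=p_{i,j}/\theta_{i,j}$, to depend only on $j$; every conserved quantity in a row is independent of $i$. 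Consequently the last three components of $\bw$ are globally constant: $(\bw)_2\equiv U$, $(\bw)_3\equiv 0$, $(\bw)_4=p_{i,j}\exp(-\psi_{i,j})=\const$.

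Next I would dispose of the $x$ sweep. When reconstructing at a face $(\iph,j)$ the one-dimensional $\phi_h$ used there is the interpolant along the horizontal line $y=y_j$, which is constant in $x$ because $\phi_{i,j}$ does not depend on $i$; hence $\psi_h\equiv 0$ on the whole stencil and $\bw_{i',j}=[\rho_j,U,0,p_j]^\top$ for $i'=i-1,\dots,i+2$. Any reconstruction that reproduces constants then returns $\bw_{\iph,j}^L=\bw_{\iph,j}^R=[\rho_j,U,0,p_j]^\top$, so by consistency $\nfl_{\iph,j}=\fl([\rho_j,U,0,p_j]^\top)$, which is independent of $i$; thus $\nfl_{\iph,j}-\nfl_{\imh,j}=0$. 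Likewise $\phi_{i+1,j}=\phi_{i,j}$ in~(\ref{eq:hydrop}) gives $\pe_{\iph,j}^L=\pe_{\imh,j}^R=p_{i,j}$, so $\alpha_{i,j}=0$. The whole $x$-flux difference and the $x$-component of the source therefore drop out of~(\ref{eq:fvint}).

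Then I would run the one-dimensional well-balanced argument along $y$, with the tangential velocity $U$ carried along passively. Because $(\bw)_2,(\bw)_3,(\bw)_4$ are constant, the $y$-reconstruction gives $u_{i,\jph}^{L}=u_{i,\jph}^{R}=U$, $v_{i,\jph}^{L}=v_{i,\jph}^{R}=0$ and, by continuity of $\psi_h$, $p_{i,\jph}^{L}=p_{i,\jph}^{R}=p_{i,j}\exp(\psi_{i,\jph}-\psi_{i,j})=:p_{i,\jph}$ (only the density may jump). A numerical flux with the contact property, read in the $y$ direction, returns $\ngl_{i,\jph}=[0,\,0,\,p_{i,\jph},\,0]^\top$, and similarly at $(i,\jmh)$. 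Going through~(\ref{eq:fvint}) componentwise: the mass, $x$-momentum and energy equations have zero flux differences and zero source (the energy source $u\alpha+v\beta$ vanishes since $\alpha=0$, $v=0$); the $y$-momentum equation has flux difference $(p_{i,\jph}-p_{i,\jmh})/\Delta y$, while $\beta_{i,j}=(\pe_{i,\jph}^L-\pe_{i,\jmh}^R)/\Delta y$ equals the same quantity by the identity~(\ref{eq:hydrop}) taken along $y$. Hence $\dd{\con_{i,j}}{t}=0$ at every interior cell. For a periodic or transmissive $x$-boundary the ghost $\bw$ values coincide with the interior ones (the data is $x$-independent); for a transmissive $y$-boundary they copy $\bw_{i,1}$ or $\bw_{i,\ny}$, whose last three components are the same global constants, so the reconstruction at the boundary faces still produces $u=U$, $v=0$ and equal pressures and the computation is unchanged, including the half-cell formula~(\ref{eq:fvwall}); for a solid wall along $y$ the wall flux is $[0,0,p_{i,1},0]^\top$ with $p_{i,1}$ the cell pressure (consistent because $v_{i,1}=0$), the interior-type face $(i,\ohalf)$ is treated as above, and the half-cell $y$-momentum balance again reduces to $p_{i,\ohalf}-p_{i,1}$ on both sides. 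Summing over all cell types gives $\dd{\con_{i,j}}{t}=0$ everywhere.

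The main obstacle I anticipate is the $y$ sweep: the contact property is stated for one-dimensional states with zero velocity, so one must justify that a consistent flux resolving stationary contacts still returns $[0,0,p,0]^\top$ when the two states differ only in density but carry a common nonzero tangential velocity $U$ — equivalently, that $U$ enters the normal numerical flux only through the (vanishing) mass flux. This is true for the Roe and HLLC fluxes considered here, but it requires a small multidimensional extension of the contact property and should be made explicit. Everything else is bookkeeping: transcribing the one-dimensional proof row by row and verifying that none of the boundary closures destroys the constancy of $(\bw)_2,(\bw)_3,(\bw)_4$.
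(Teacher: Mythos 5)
Your proof is correct and follows essentially the same route as the paper's: reduce to the one-dimensional well-balanced argument along $y$ after observing that every quantity is independent of $i$, and treat the solid-wall half cell separately via the reconstruction at the face $(i,\ohalf)$ and the wall flux $[0,0,p_{i,1},0]^\top$. You are in fact more explicit than the paper about the cancellation of the $x$-sweep (constant fluxes and $\alpha_{i,j}=0$) and about the need to extend the stated contact property to two states carrying a common nonzero tangential velocity $U$, a point the paper's proof uses implicitly without comment.
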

\begin{proof}
Since the solution is a function of $y$ coordinate only, the well-balanced property for an interior cell follows similarly to the one-dimensional case. We next consider the case of a cell on a solid wall as shown in figure~(\ref{fig:grid}b). Under the assumed conditions in the theorem, the $\bw$ variables are constant on the grid and hence any reconstruction scheme will be exact and yields the interface values
\[
(w_2)_{i,\ohalf}^L = (w_2)_{i,\ohalf}^R = \const, \qquad (w_3)_{i,\ohalf}^L = (w_3)_{i,\ohalf}^R = 0, \qquad (w_4)_{i,\ohalf}^L = (w_4)_{i,\ohalf}^R = p_{i,1} \exp(-\psi_{i,1})
\]
and hence
\[
u_{i,\ohalf}^L = u_{i,\ohalf}^R = \const, \qquad v_{i,\ohalf}^L = v_{i,\ohalf}^R = 0, \qquad p_{i,\ohalf}^L = p_{i,\ohalf}^R = p_{i,1} \exp(\psi_{i,\ohalf} - \psi_{i,1}) =: p_{i,\ohalf}
\]
The fluxes in the $y$ direction are $\ngl_{i,1}=[0, \ 0, \ p_{i,1}, \ 0]^\top$ and $\ngl_{i,\ohalf} = [0, \ 0, \ p_{i,\ohalf}, \ 0]^\top$. By definition the local hydrostatic pressure $\pe_{i,\ohalf}^L = p_{i,\ohalf}$ and hence the source term $\beta_{i,1}$ is exactly balanced by the flux derivative in the $y$ direction.
\end{proof}
\section{1-D numerical results}
\label{sec:res1d}
All the test cases in this section use a universal gas constant, $R$ = 1 and specific heat ratio, $\gamma$ = 1.4 unless stated otherwise. HLLC scheme~\cite{Toro1994} is used for computing the numerical fluxes. Time integration is performed using 3-stage strong stability preserving Runge-Kutta scheme~\cite{Shu1988}. In some of the tests, we compare the results with a non well-balanced (NWB) scheme where the source term is approximated as
\[
\frac{\partial \phi}{\partial x} (x_i) = \frac{\phi _{i+1} - \phi _ {i-1}}{2 \Delta x}
\]
Whenever we use the NWB scheme, we will reconstruct conserved variables to achieve high order accuracy.

\subsection{Isothermal hydrostatic solution}
To study the well-balanced property of the scheme for a system with an initial isothermal equilibrium condition, we use three types of gravitational potential functions defined as $\phi(x)$ = $x$, $\frac{1}{2}x^2$ and $\sin(2\pi x)$ . The initial density and pressure are given by
\[
\rho_{e} (x) = p_{e} (x) = \exp (- \phi (x))
\]
We interpolate the exact hydrostatic solution onto the grid and theorem~(\ref{thm:iso}) shows that this will be exactly preserved by the scheme. For grids with 100 and 1000 cells, the simulation is performed upto a final time of 2.0 and the errors in $L_1$ norm for the primitive variables are shown in table~(\ref{t:isothermalwb1}). We observe that, as the errors for all the primitive variables are of the order of machine precision for each of the gravitational potentials, the scheme maintains well-balanced property for isothermal hydrostatic solutions.

\begin{table}
\begin{center}
\begin{tabular}{|c|c|c|c|c|}
\hline
Potential & Cells & Density & Velocity & Pressure\\\hline
        \multirow{2}{*}{$x$} & 100 & 8.779E-15 & 7.031E-16 & 1.127E-14\\
                           & 1000& 9.126E-14 & 2.701E-15 & 1.193E-13 \\\hline
        \multirow{2}{*}{$\frac{1}{2}x^{2}$} & 100 & 1.160E-14 & 5.288E-16 &1.202E-14\\
                           & 1000& 1.143E-13&1.332E-15 &1.174E-13\\\hline
        \multirow{2}{*}{$\sin(2\pi x)$} & 100 & 1.213E-14 & 3.907E-16 &2.080E-14\\
                           & 1000& 1.162E-13&6.533E-15 &2.072E-13\\\hline
\end{tabular}
\caption{Errors in density, velocity and pressure for isothermal examples using different potentials}
\label{t:isothermalwb1}
\end{center}
\end{table}


We next compare the accuracy of the well-balanced scheme for simulating the evolution of small perturbations added to the initial isothermal hydrostatic solution, against a non well-balanced scheme. The potential is taken as $\phi(x) = x$ and the initial condition is given by
\[
\rho (x) = \exp  (-\phi (x)), \qquad p (x) = \exp (-\phi (x)) + \varepsilon \exp (-100(x - 1/2)^2)
\]
The simulation is performed on a domain of size [0,1] upto a final time of $t = 0.25$ units. We perform the tests for two values of the amplitude of pressure perturbation, $\varepsilon = 10^{-3}$ and $\varepsilon = 10^{-5}$. The results are shown in figures~(\ref{fig: isothermalpert}). It can be observed (figure  (\ref{fig: isothermalpert}a)) that for the larger perturbation case, the non well-balanced scheme provides results comparable to the well-balanced scheme even with coarse meshes. However, for the smaller perturbations, the non well-balanced scheme  is very inaccurate on the coarse mesh and requires a much finer mesh to give reasonably accurate solution, while the well-balanced scheme is able to resolve the features of the solution accurately even with the coarse mesh. Using the finer mesh ($\Delta x = 0.0005$), we also perform a simulation for the smaller perturbation with a compact fifth order WENO reconstruction where the convective fluxes are computed using the Roe scheme (henceforth called CRWENO5 scheme~\cite{Ghosh2016}). The resulting solution is compared with that obtained using the current scheme and illustrated in figure~(\ref{fig: isothermalpert}d). We observe that the solutions from the current scheme converge to the reference solution as the mesh is refined.
\begin{figure}
\begin{center}
\begin{tabular}{cc}
\includegraphics[width=0.48\textwidth]{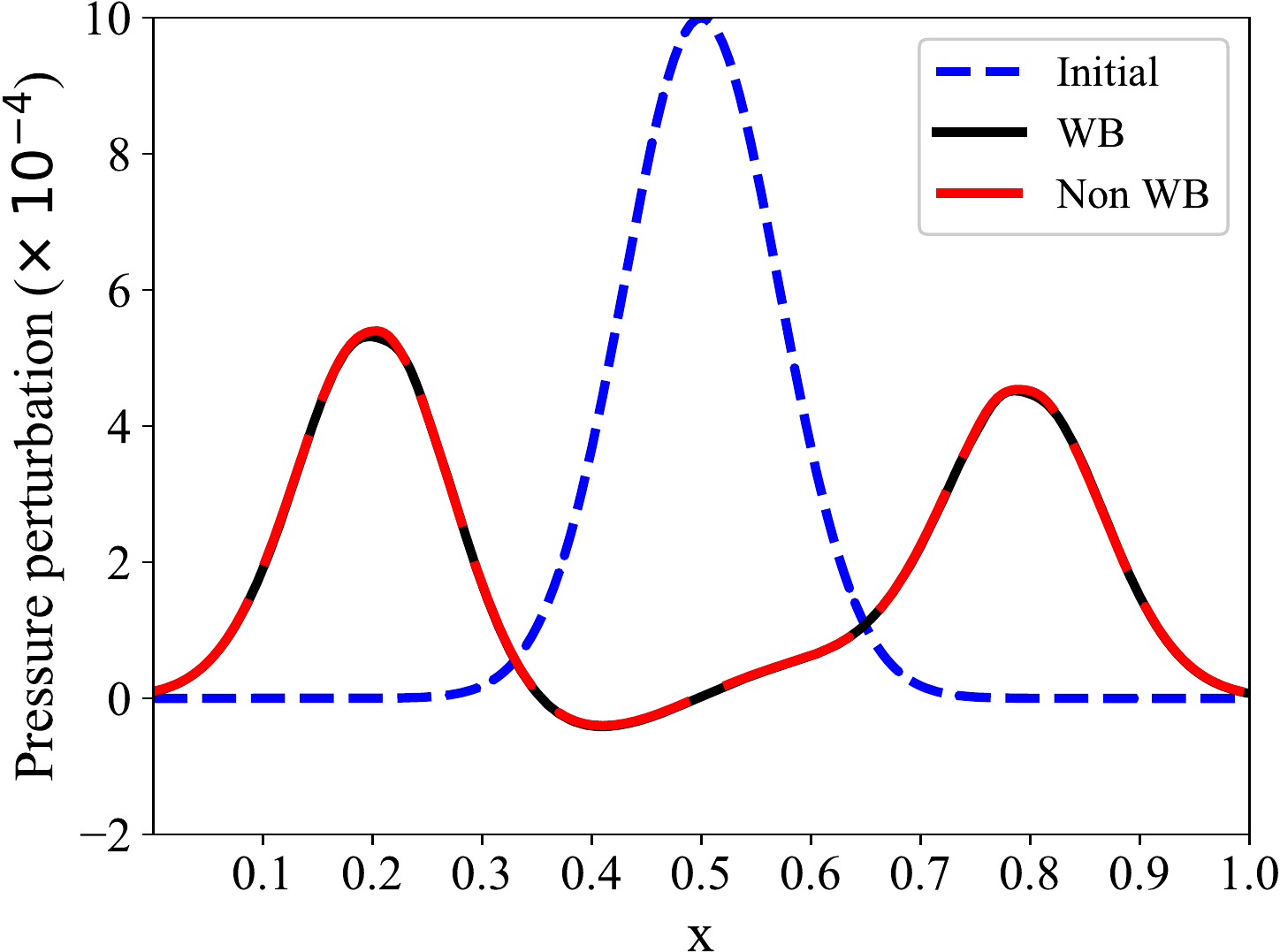} &
\includegraphics[width=0.48\textwidth]{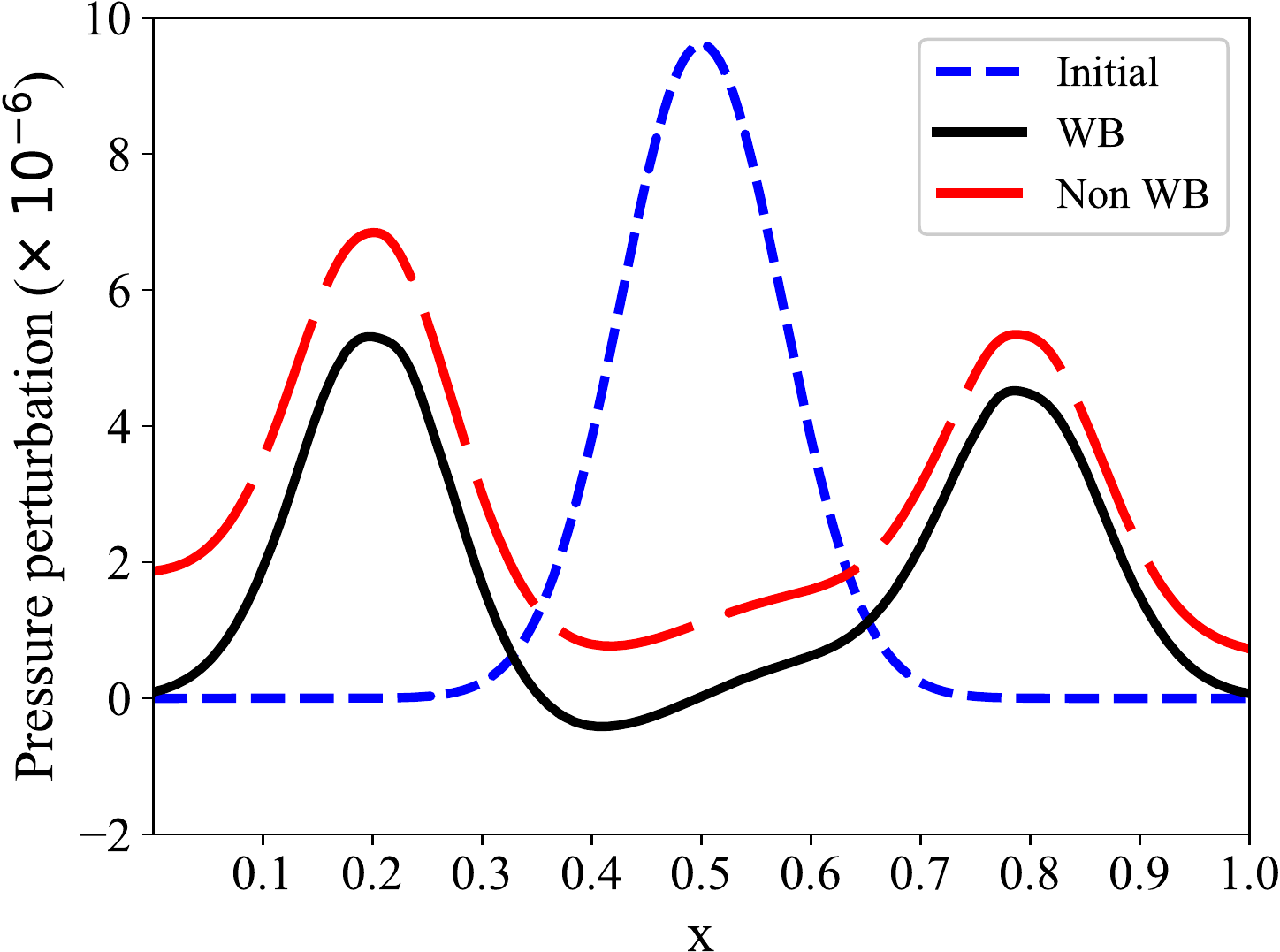} \\
(a)  & (b)  \\
\includegraphics[width=0.48\textwidth]{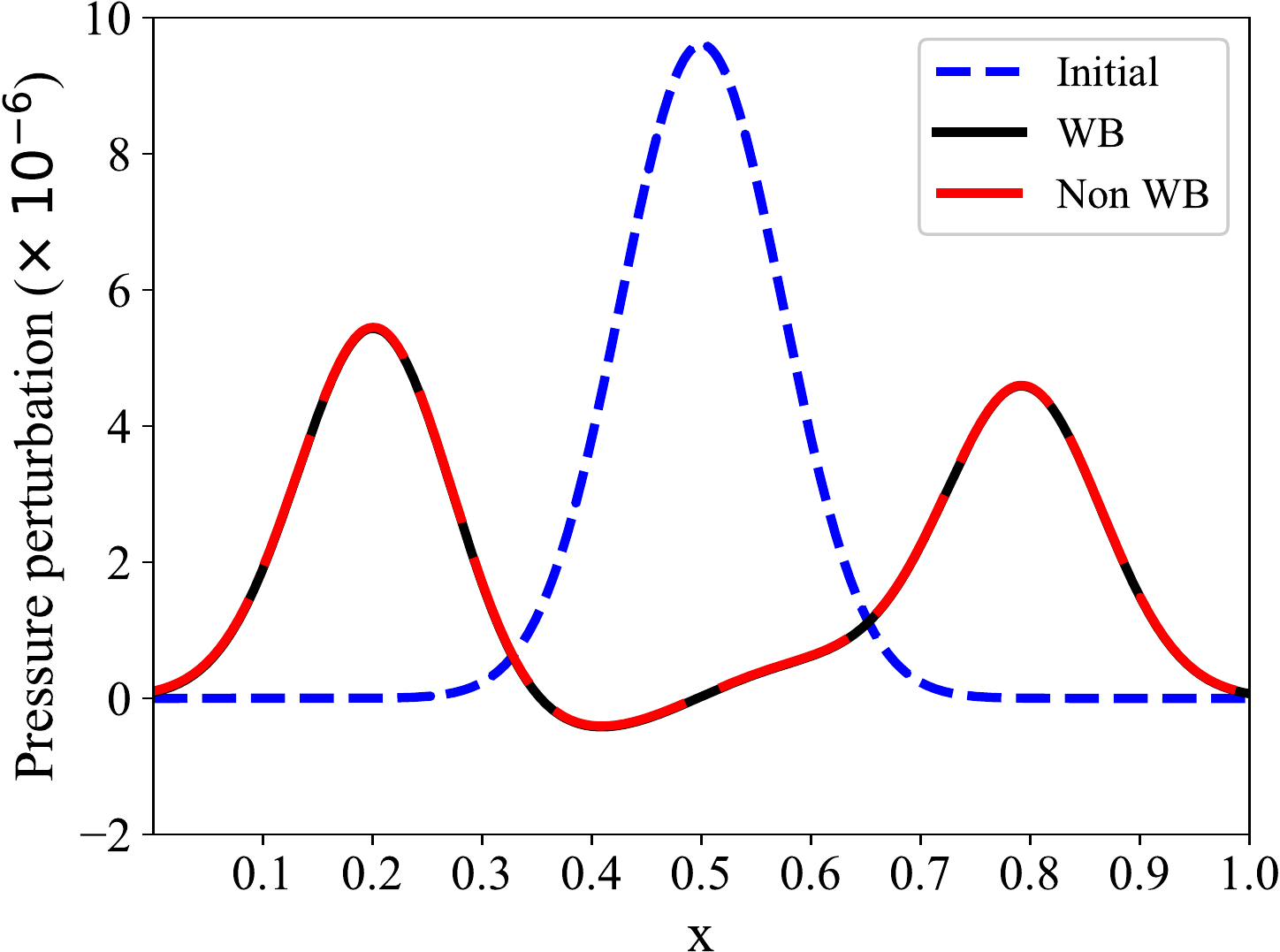} &
\includegraphics[width=0.48\textwidth]{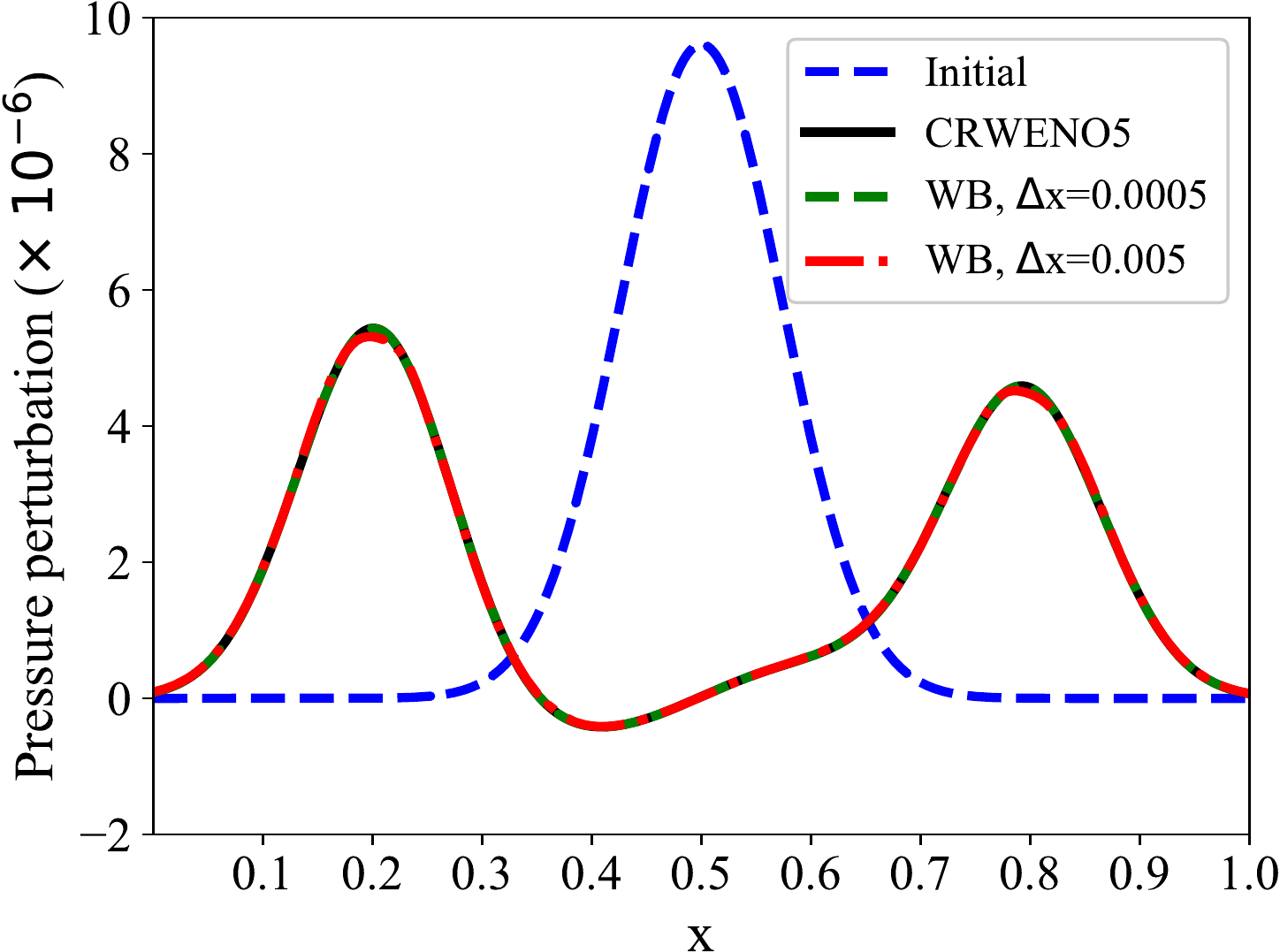} \\
(c)  & (d) \\
\end{tabular}
\caption{Evolution of pressure perturbations for isothermal case: (a) $\varepsilon$ = $10^{-3}$, $\Delta x = 0.005$, (b) $\varepsilon$ = $10^{-5}$, $\Delta x$ = 0.005, (c) $\varepsilon$ = $10^{-5}$, $\Delta x$ = 0.0005, (d) $\varepsilon$ = $10^{-5}$ }
\label{fig: isothermalpert}
\end{center}
\end{figure}
\subsection{Convergence to steady state}

In this test case which is taken from~\cite{Xing2013}, we model a gas falling into an environment of fixed external potential. Simulations are performed after adding a small pressure perturbation to an isothermal hydrostatic condition. The solution is expected to converge to the steady state after a long time due to numerical dissipation. The gravitational potential is in the form of a sine wave,
\[
\phi (x) = -\phi_{0} \frac{L}{2\pi} \sin\left(\frac{2\pi x}{L}\right)
\]
where the domain length, $L = 64$ units, and the amplitude of gravitational potential, $\phi_{0} = 0.02$. The initial condition is given by
\[
\rho(x) = \exp\left(\frac{-\phi (x)}{RT_0}\right), \quad u = 0, \quad p(x) = \rho(x) RT_0 + 10^{-3} \exp(-100(x - 32)^2)
\]
where the temperature, $T_0 = 0.6866$ and the specific heat ratio $\gamma = 5/3$. For a mesh of 64 cells with periodic boundary conditions, results are plotted and compared with non well-balanced scheme after 100 million time steps. From figure~(\ref{fig:steadyconverge}), it can be seen that the velocity distribution for the well-balanced scheme converges to the order of $10^{-13}$ which is close to machine precision, while the solution for the non well-balanced scheme seems to diverge away from the steady state. The other quantities also converge to their hydrostatic values in case of the well-balanced scheme while we do not see this convergence happening for the non well-balanced scheme.
\begin{figure}
\begin{center}
\begin{tabular}{cc}
\includegraphics[width=0.48\textwidth]{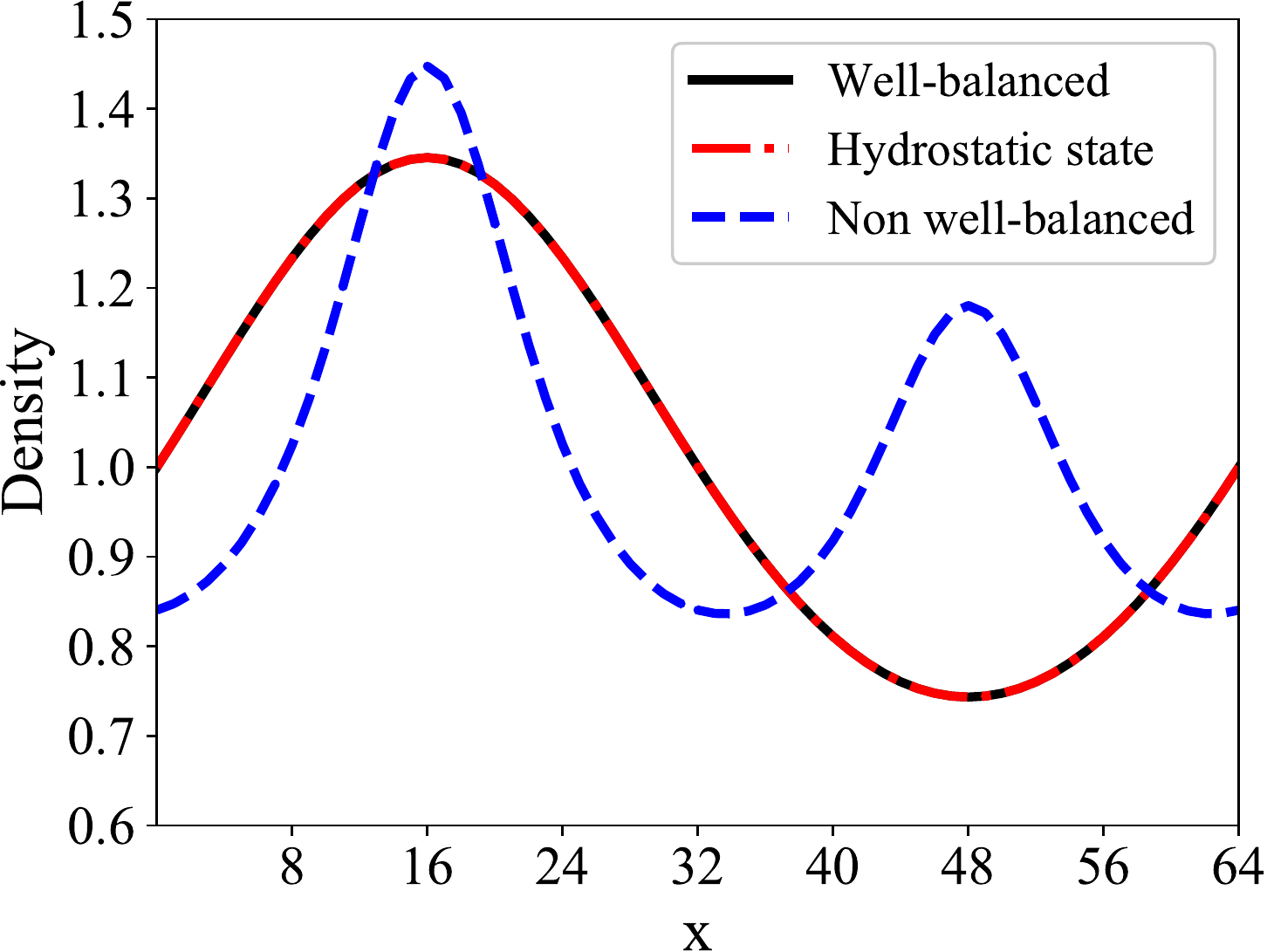} &
\includegraphics[width=0.48\textwidth]{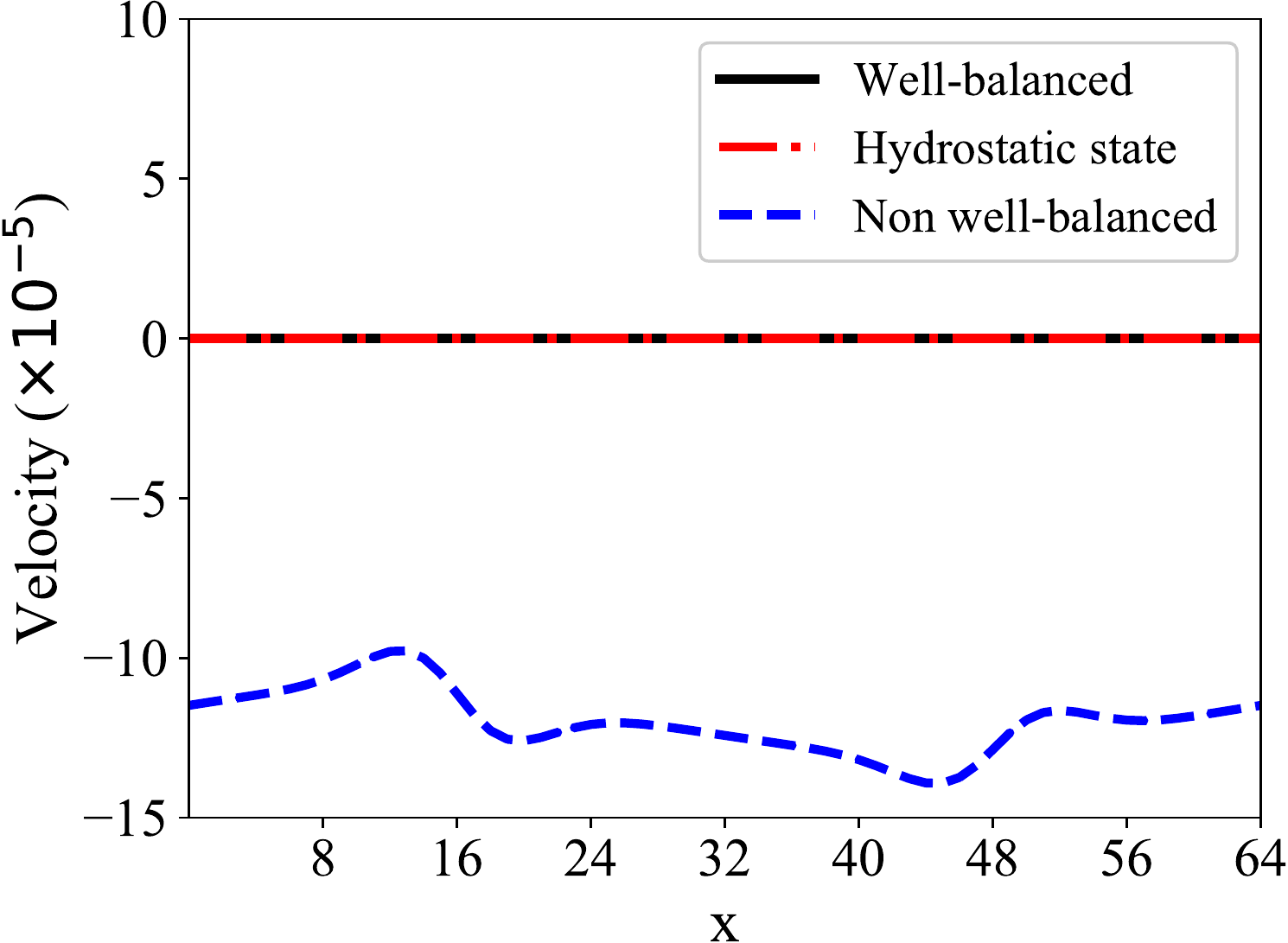} \\
(a)  & (b)  \\
\includegraphics[width=0.48\textwidth]{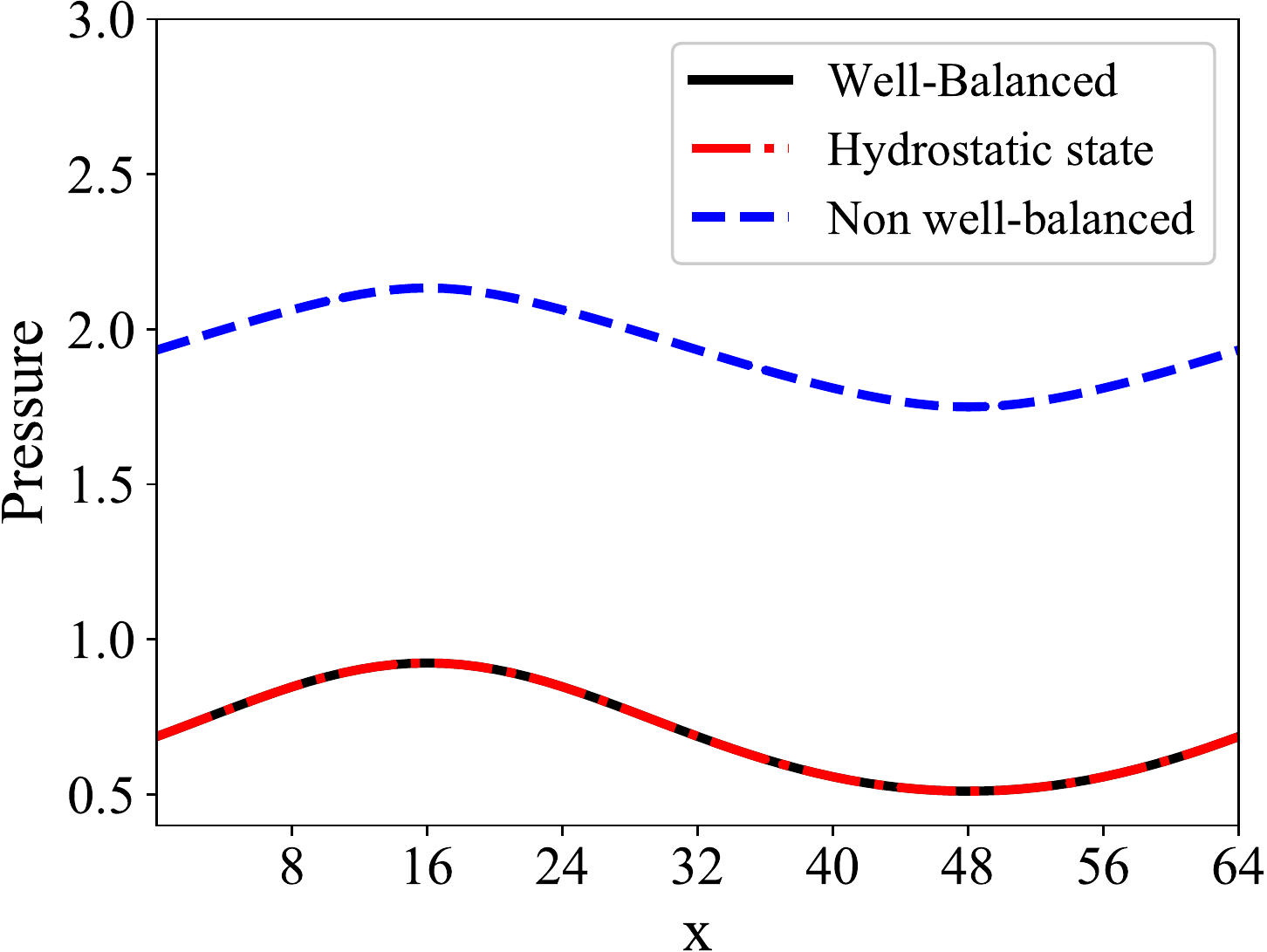} &
\includegraphics[width=0.48\textwidth]{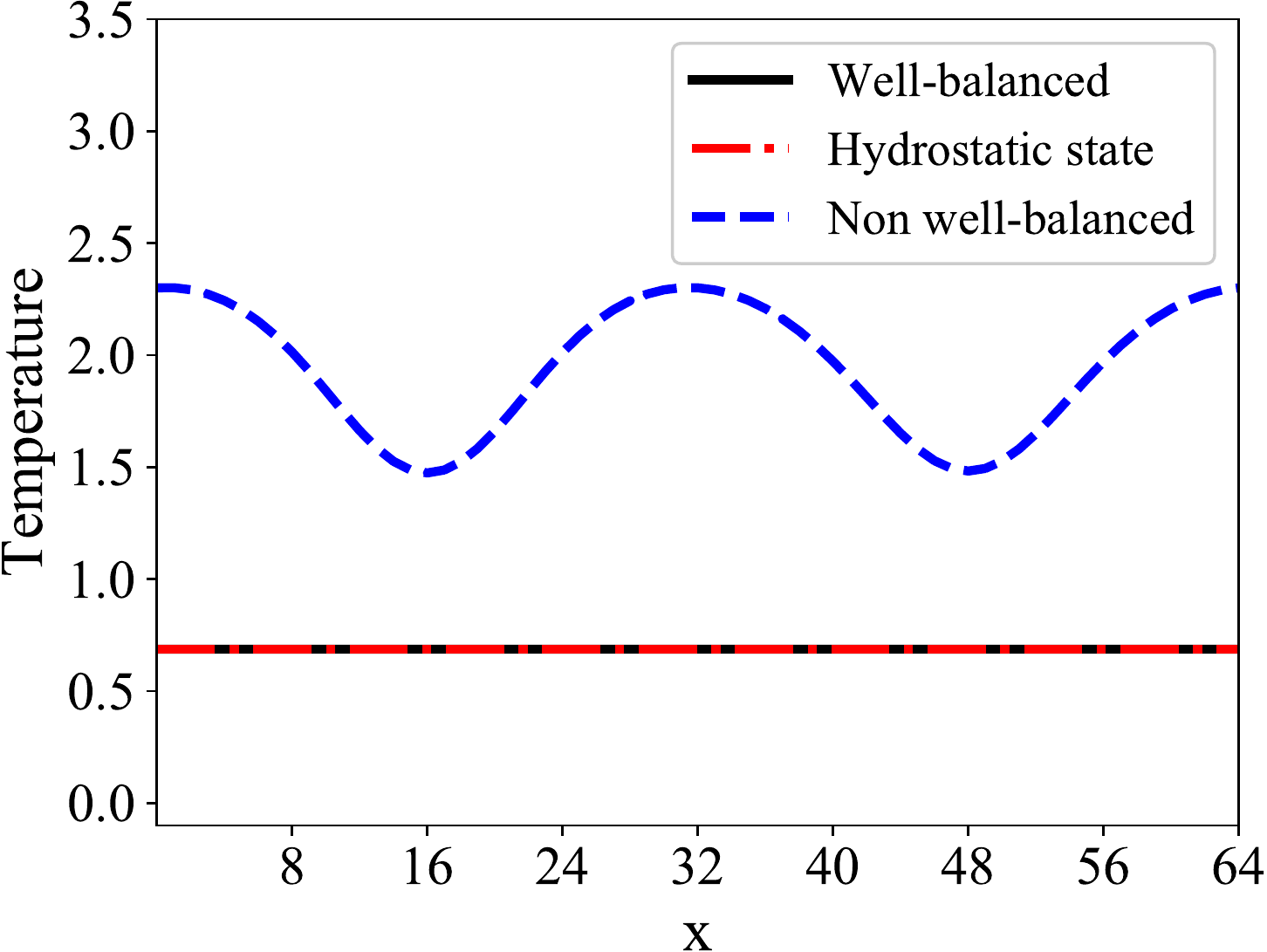} \\
(c)  & (d) \\
\end{tabular}
\caption{Comparison of convergence for well-balanced and non well-balanced scheme after 100 million time steps for : (a) density, (b) velocity, (c) pressure, (d) temperature}
\label{fig:steadyconverge}
\end{center}
\end{figure}
\subsection{Polytropic hydrostatic solution}
\label{sec:polyhydro}
In this test, we consider a polytropic hydrostatic solution given by
\begin{equation}
\Te (x) = 1 - \frac{\gamma - 1}{\gamma} \phi (x), \qquad \rhoe (x) = [\Te(x)]^{\frac{1}{\gamma - 1}}, \qquad \pe(x) = [\rhoe(x)]^\gamma
\label{e:polytropic}
\end{equation}
where the potential is $\phi(x) = x$ and this is actually an isentropic solution. The scheme that we have proposed is not well-balanced for the exact hydrostatic solution but only for the discrete hydrostatic solution. We first study the accuracy of the discrete hydrostatic solution by computing it on a sequence of meshes using the technique in section~(\ref{sec:dischydro}), and measuring the $L^2$ error norm relative to the exact hydrostatic solution. The results  shown in table~(\ref{t:polytropicconvergence}) indicate that the error in the discrete hydrostatic solution converges to zero at a rate of 2, showing the second order accuracy.
\begin{table}
 \begin{center}
 \begin{tabular}{|c|c|c|c|c|c|}
 \hline
Cells & $\rho$ error & $\rho$ rate & Velocity & $p$ error & $p$ rate\\\hline
        100 & 1.272E-6 & - & 0.0 &1.105E-6 & -  \\ \hline
        200 & 3.180E-7 & 1.9996 & 0.0 & 2.763E-7  & 1.9992 \\ \hline
         400 & 7.952E-8 & 1.9998 & 0.0 & 6.909E-8  & 1.9996\\ \hline
         800 & 1.988E-8 & 1.9999& 0.0 & 1.727E-8 & 1.9998\\ \hline
      1600 & 4.970E-9 & 1.9999 & 0.0 & 4.319E-9 & 1.9999\\ \hline
 \end{tabular}
\caption{Convergence of error in the discrete hydrostatic solution for a polytropic case}
 \label{t:polytropicconvergence}
 \end{center}
\end{table}
Next we consider three test cases to study the well-balanced property.
\begin{enumerate}
\item NWB-Exact: We use the non well-balanced scheme with exact hydrostatic solution as initial condition.
\item WB-Exact: We use the well-balanced scheme with exact hydrostatic solution as initial condition.
\item WB-Discrete: We use the well-balanced scheme with discrete hydrostatic solution as initial condition.
\end{enumerate}
The simulations are performed on the domain $[0,1]$ until a final time of 2.0 units using a grid of 100 cells and solid wall boundary conditions. At the final time, we measure the error norm relative to the initial condition which are shown in table~(\ref{t:polytropiclinear}). As expected, the NWB-Exact scheme is not able to keep the solution near the hydrostatic solution while the WB-Discrete is able to do so. \begin{table}                                                                                  
\begin{center}
\begin{tabular}{|c|c|c|c|c|}                                                                    
\hline
Scheme & Cells & Density & Velocity & Pressure\\\hline
        NWB-Exact  & \multirow{3}{*}{100}  & 9.372E-5 & 7.225E-5 & 1.009E-4\\
        WB-Exact             &                                  & 5.241E-9 & 5.338E-8 & 5.814E-9\\ 
        WB-Discrete                   &                                  & 6.743E-15 & 1.328E-16 & 7.874E-15\\ 
        \hline           
    NWB-Exact &\multirow{3}{*}{1000}  & 1.046E-6 & 8.579E-7  & 1.124E-6 \\
    WB-Exact           &                                    & 4.876E-11 & 5.407E-10  & 5.407E-11\\
    WB-Discrete                 &                                    & 6.579E-14 & 8.446E-16  & 7.738E-14\\        
        \hline          
\end{tabular}
\caption{Errors in density, velocity and pressure for polytropic examples using linear potential}     
\label{t:polytropiclinear}
\end{center}                                                                                    
\end{table}
In case of WB-Exact, we see that the errors are not as large as in the case of NWB-Exact scheme. We run this case for a long time of 750 units and plot the error norm as a function of time as shown in figure~(\ref{fig:polyconverge}). We can observe that while the velocity error tends towards machine epsilon ($10^{-16}$), the errors for density and pressure reaches steady state with a value $\approx$ $10^{-7}$, but importantly, they do not grow with time. The non well-balanced scheme is not able to keep the solution stationary as seen from the figure for velocity which takes large values and does not seem to reach stationary state even at large times.
\begin{figure}
\begin{center}
\begin{tabular}{ccc}
\includegraphics[width=0.32\linewidth]{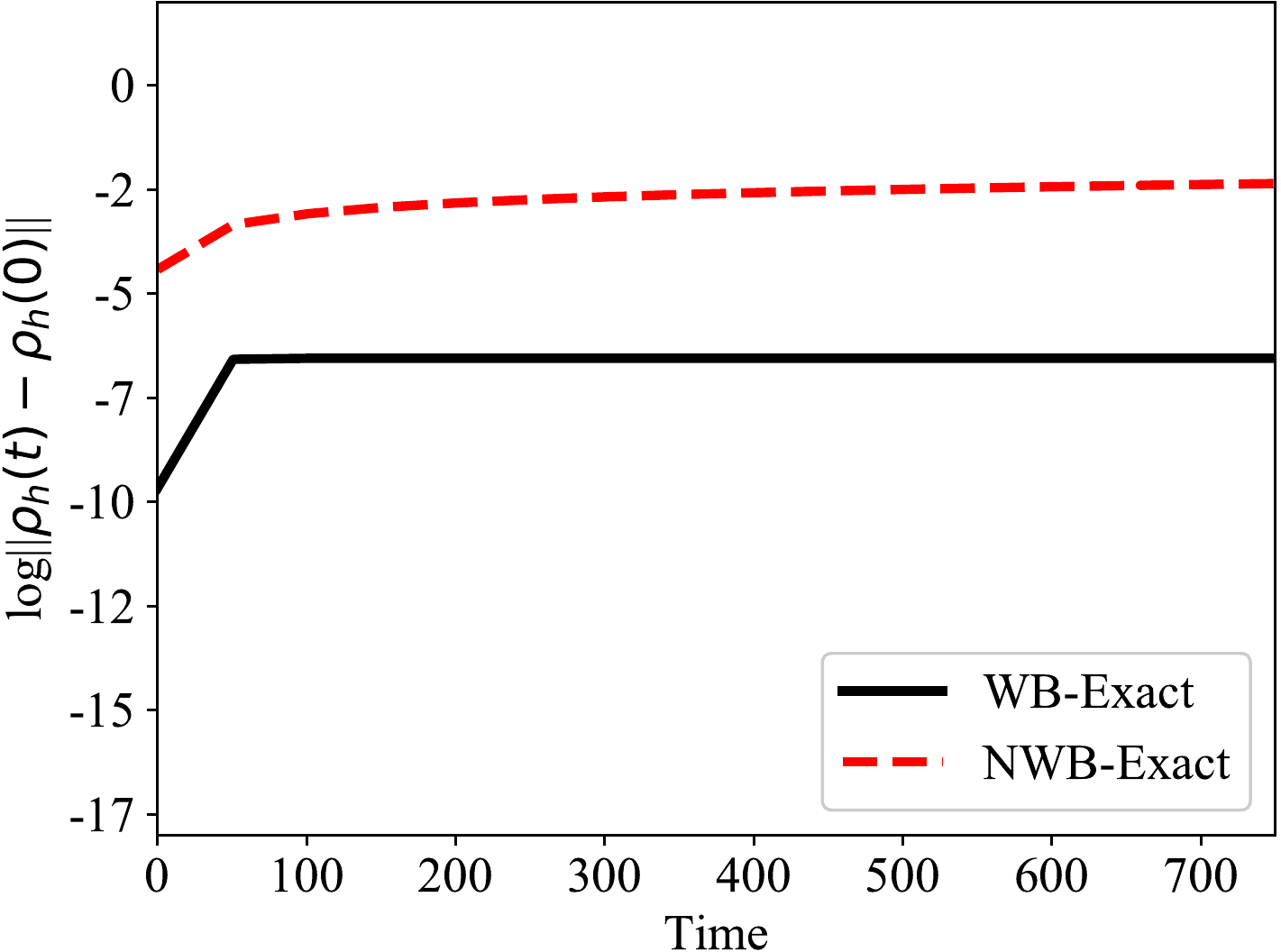} &
\includegraphics[width= 0.32\linewidth]{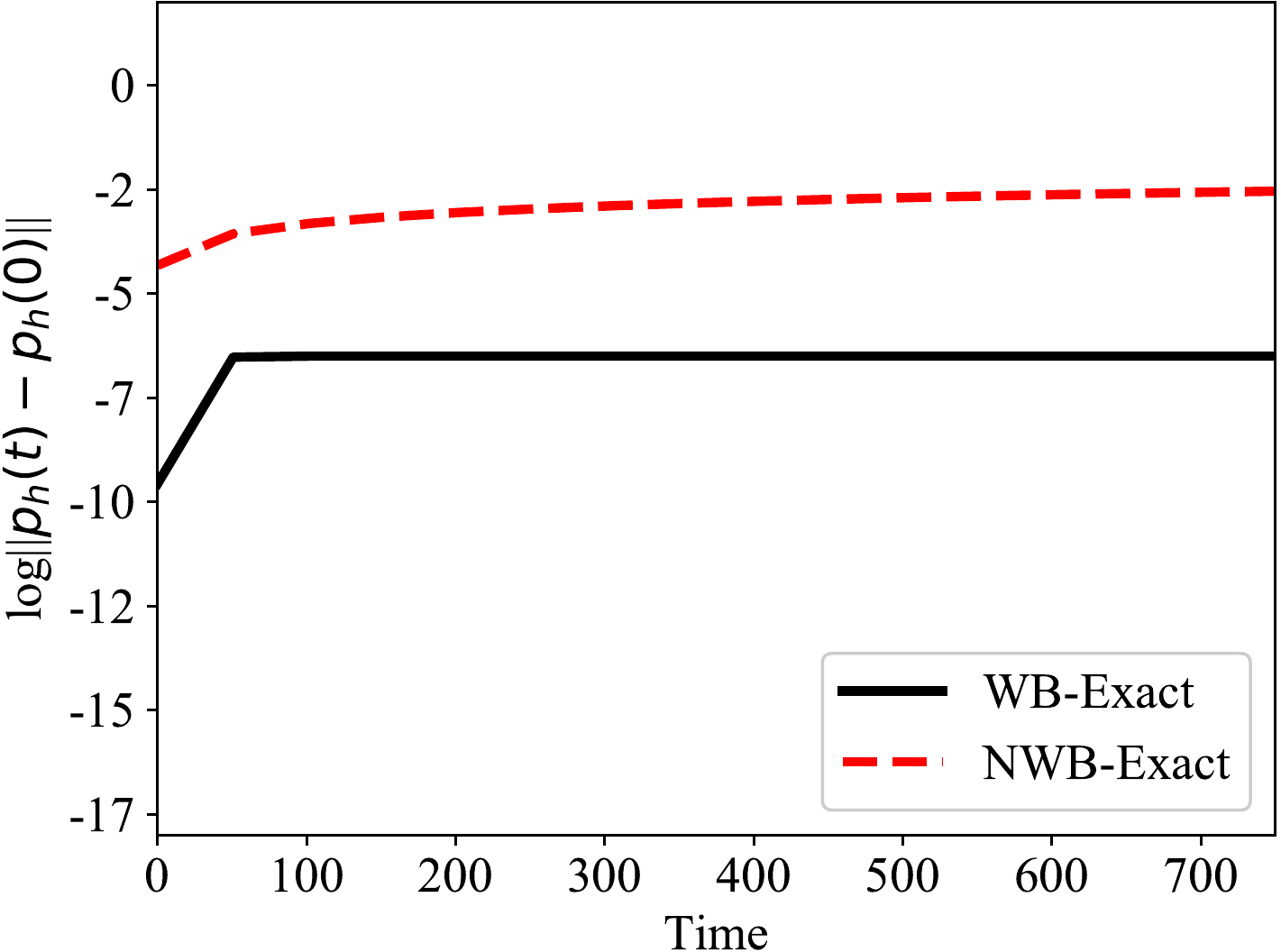} &
\includegraphics[width=0.32\linewidth]{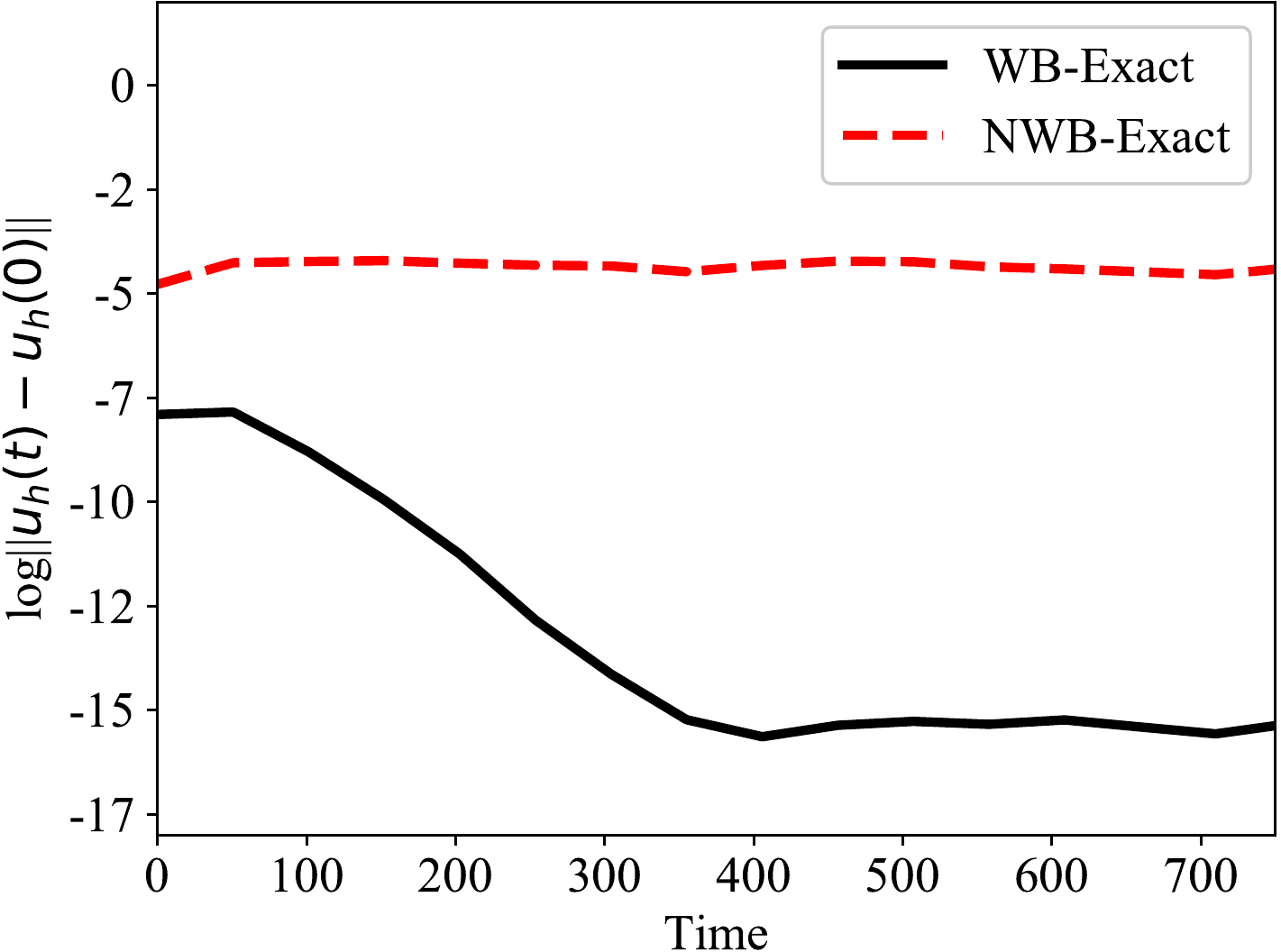}\\
(a)  & (b)  & (c) 
\end{tabular}
\caption{Stability of polytropic solution: (a) density (b) pressure (c) velocity.}
\label{fig:polyconverge}
\end{center}
\end{figure}
We perform the simulations corresponding to WB-Discrete for other types of potentials and the $L_1$ error norms are shown in table~(\ref{t:polytropictotal}). We see that the discrete hydrostatic solution is exactly preserved by our well-balanced scheme even for general potentials.
\begin{table}                                                                              
\begin{center}
\begin{tabular}{|c|c|c|c|c|}                                                                    
\hline
Potential & Cells & Density & Velocity & Pressure\\\hline
        \multirow{2}{*}{$\frac{1}{2}x^{2}$} & 100  & 1.063E-14 & 2.115E-16 &1.033E-14\\
                                                              & 1000& 1.056E-13&1.281E-15 &1.031E-13\\\hline
        \multirow{2}{*}{$\sin(2\pi x)$} & 100 & 1.282E-14 & 3.643E-16 &1.781E-14\\
                           & 1000& 1.224E-13&2.190E-15 &1.722E-13\\\hline
\end{tabular}
\caption{Errors in density, velocity and pressure for polytropic examples using different potentials}     
\label{t:polytropictotal}
\end{center}                                                                                    
\end{table}

We next study the performance of the scheme in computing small perturbations around the polytropic solution. To do this, we start the computations with the discrete hydrostatic solution but add a small perturbation to the discrete hydrostatic pressure, so that initial condition is given by
\[
p_i = \phe_i + 10^{-5} \exp (-100(x_i - 1/2)^{2})
\]
Since we do not have exact solution, we compare with the solutions obtained from another scheme~\cite{Chandrashekar2015} which is well-balanced for the exact polytropic hydrostatic solution. Simulations are performed upto a final time of $t = 0.25$ with a grid size of $\Delta x = 0.01$ and solid wall boundary conditions. The pressure perturbations at final time are compared for both schemes and is shown in figure~(\ref{fig:pressureperturbpoly}). We can see that the present scheme which is not well-balanced for exact polytropic solution is still able to give solutions which are as accurate as the exactly well-balanced scheme.  Keeping the grid size and other parameters constant, we also perform simulations by perturbing an exact hydrostatic solution. The result is shown in figure~(\ref{fig:pressureperturbpoly}b). It can be observed that even with an exact polytropic hydrostatic solution, the evolution of small perturbations is captured quite accurately by the scheme.
\begin{figure}
	\begin{center}
		\begin{tabular}{cc}
			\includegraphics[width=0.48\linewidth]{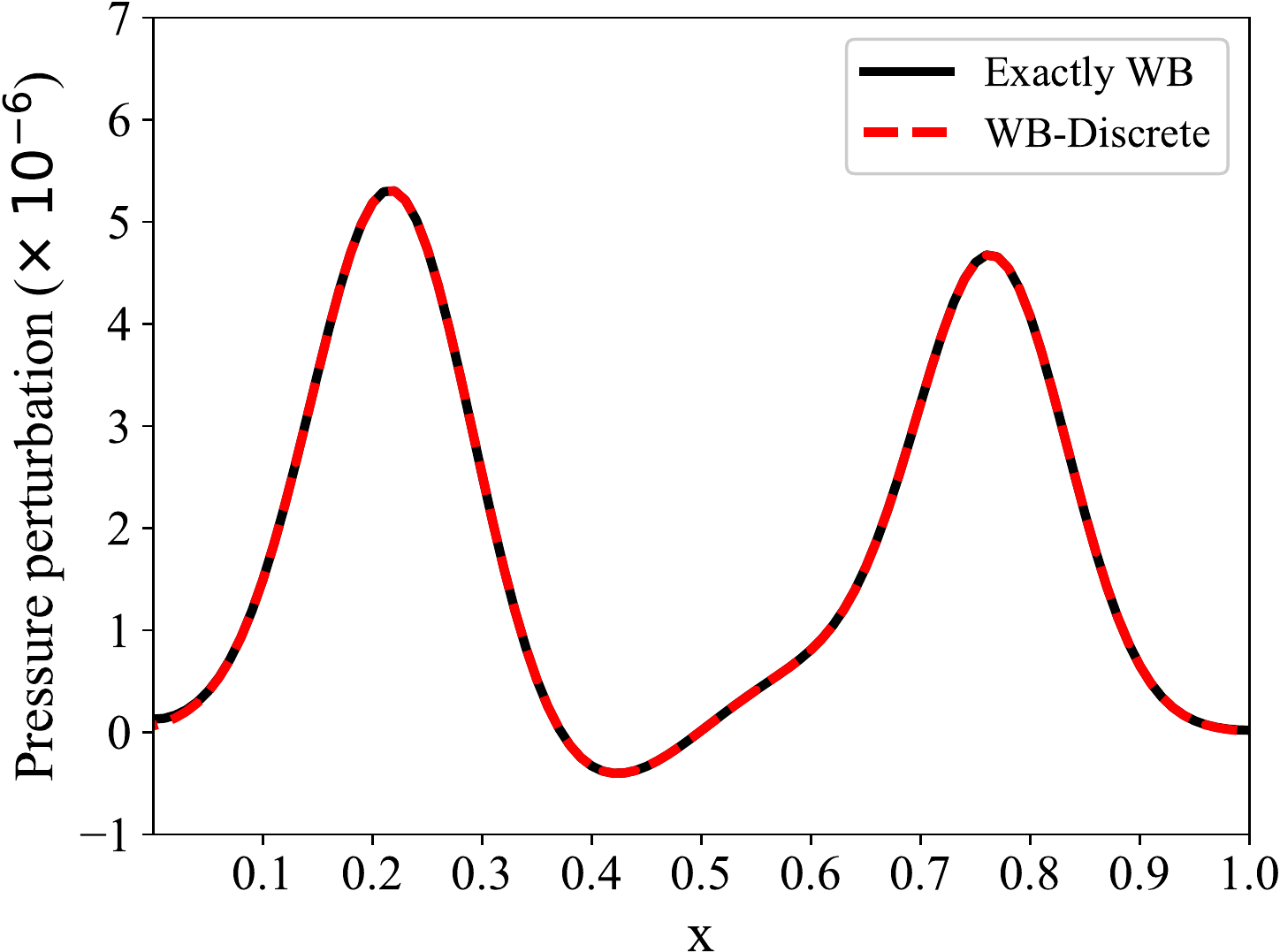} &
			\includegraphics[width= 0.48\linewidth]{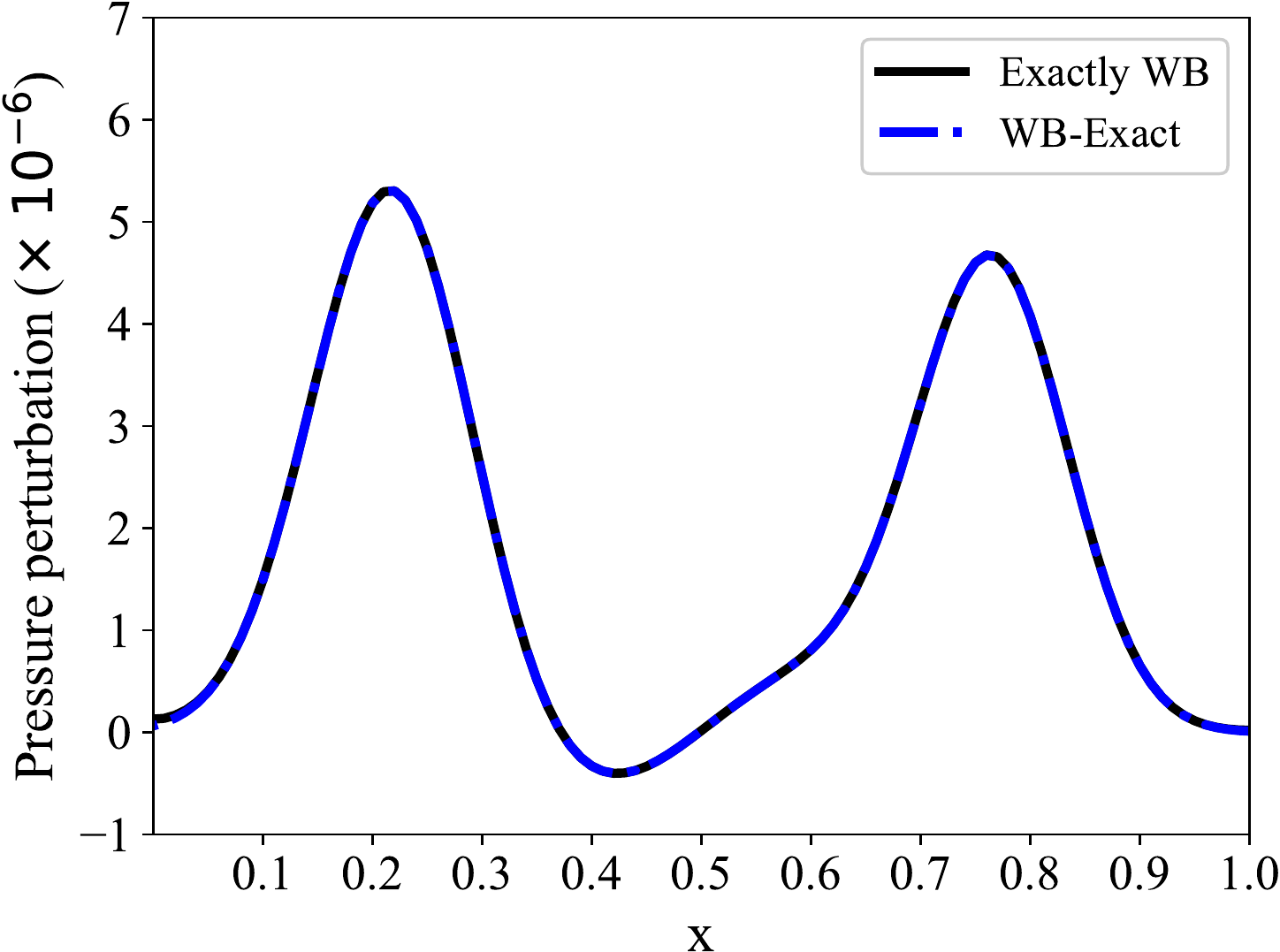} \\
			(a)  & (b) 
		\end{tabular}
		\caption{Comparison of pressure perturbation evolution with exactly well-balanced scheme using (a) discrete hydrostatic initial condition (b) exact hydrostatic initial condition.}
		\label{fig:pressureperturbpoly}
	\end{center}
\end{figure}

\subsection{Hydrostatic solution for van der Waals equation of state}
\label{sec:vdWhydro}
In this test case, we use a polytropic van der Waals gas as defined in section~(\ref{sec:vdWeos}). As an exact hydrostatic solution cannot be obtained analytically for the van der Waals equation of state, we obtain the solution, which we will refer to as the ``exact hydrostatic solution", by solving the following form of the hydrostatic equation using Chebfun.
\begin{equation}
\frac{d \rho}{d x} + \frac{1}{f(\rho)}\frac{d \phi}{d x} = 0; \qquad \text{where} \qquad f(\rho) = \frac{MR_uT}{\rho(M - \rho b)^2} - \frac{2a}{M^2}
\label{eqn:hydrovanode}
\end{equation}
In the above initial value problem, the domain is taken to be [0,1] and the value of $\rho$ at the left boundary is taken as 1.0. The universal gas constant and molar mass of the gas is also taken as 1.0. To introduce the effects of real gases, the van der Waals constants must be greater than 0; we take $a$ = 0.4 and $b$ = 0.001. Linear gravitational potential (i.e. $\phi(x) = x$) is used. The hydrostatic solution obtained from this system is plotted in figure (\ref{fig:idealvsvdW}) and compared with a solution for the same system under ideal gas conditions i.e. $a$ = $b$ = 0.
\begin{figure}
\begin{center}
\includegraphics[width=0.6\textwidth]{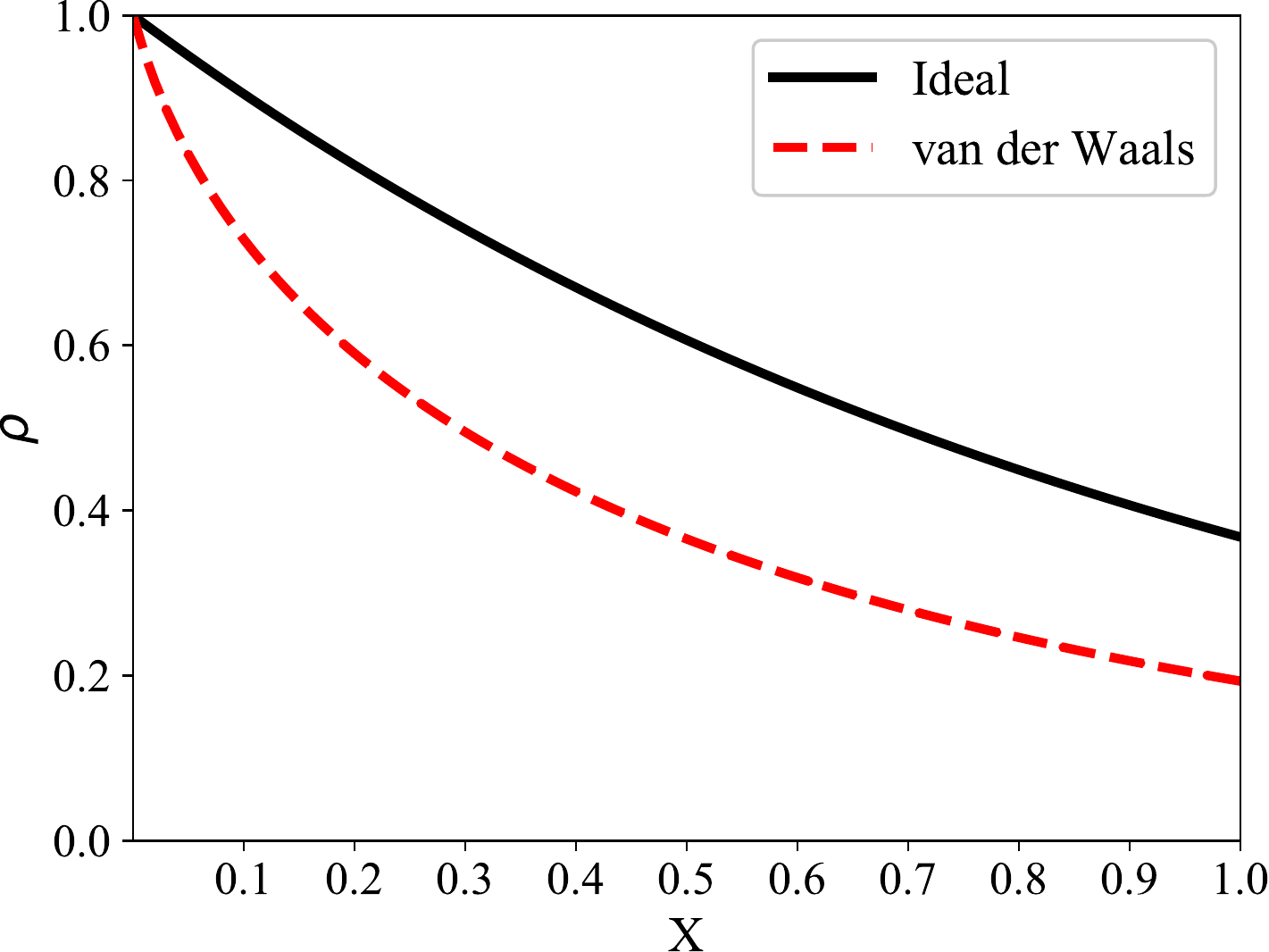}
\end{center}
\caption{Hydrostatic solutions obtained using equation (\ref{eqn:hydrovanode}) under van der Waals and ideal gas assumptions}
\label{fig:idealvsvdW}
\end{figure}
From theorem~(\ref{thm:iso}), we can infer that this exact hydrostatic solution would not be exactly preserved by the scheme. Therefore, we compute the discrete  hydrostatic solution using the technique shown in section~(\ref{sec:dischydro}). To calculate the order of accuracy for the discrete hydrostatic solution, we compute the $L_2$ error norms for density and pressure with respect to the exact solution on a sequence of meshes. From the results shown in table (\ref{tab:vdWordacc}), we can establish that the order of convergence for the discrete hydrostatic solution is two.\\
	\begin{table}
		\begin{center}
			\begin{tabular}{|c|c|c|c|c|}
				\hline
				Nodes & \multicolumn{2}{|c|}{$\rho$} & \multicolumn{2}{|c|}{$p$} \\ \hline
				& Error & Order & Error & Order \\ \hline
				101 &  2.08E-5 &  - &  1.10E-5 &  - \\ \hline
				201 &  5.22E-6 &  1.99 &  2.77E-6 &  1.99 \\ \hline
				401 &  1.31E-6 &  2.00 &  6.93E-7 &  2.00 \\ \hline
				801 &  3.27E-7 &  2.00 &  1.73E-7 &  2.00 \\ \hline
				1601 &  8.18E-8 &  2.00 &  4.34E-8 &  2.00 \\ \hline
			\end{tabular}
			\caption{Convergence of error  for discrete hydrostatic solution for van der Waals equation of state}
			\label{tab:vdWordacc}
		\end{center}
	\end{table}
	For testing the well-balanced property of the scheme, we take the three cases: 1) NWB-Exact, 2) WB-Exact and 3) WB-Discrete (definitions are same as in section~(\ref{sec:polyhydro})). For all the cases, simulations are performed by assuming transmissive boundary conditions up to a final time of 2.0. $L_1$ error norms are measured with respect to the hydrostatic initial condition at the final time and is shown in table (\ref{tab:vdWerrornorm}). We observe that, for the WB-Discrete case, the initial condition is exactly preserved by the well-balanced scheme as the errors are of the order of machine precision for all the primitive variables. As predicted, the scheme does not maintain well-balanced property for the exact hydrostatic solution. However, when a longer simulation with final time $t$ = 250 units was performed, we observe that velocity error norm tends towards machine precision while pressure and density error norms achieve steady state ($\approx 10^{-5}$) without further increase in values as shown in figure (\ref{fig:vdWexactlong}). We can also see that the solutions for the non well-balanced scheme becomes unstable at time $t \approx 19.1$ units.  Therefore, the exact hydrostatic state is preserved to a good accuracy by the well-balanced scheme and especially the static condition of zero velocity is well maintained.
	\begin{table}
		\begin{center}
			\begin{tabular}{|c|c|c|c|c|}
				\hline
				Scheme & Cells &$\rho$ & u & p \\ \hline
				NWB-Exact & \multirow{3}{*}{100} &  1.268E-1  & 4.964E-1  & 6.002E-2 \\ 
				WB-Exact & &  2.037E-5  & 1.454E-5  & 1.022E-5 \\ 
				WB-Discrete & & 1.299E-13  & 3.595E-13  & 2.193E-13 \\ \hline
				NWB-Exact & \multirow{3}{*}{1000} &  1.2173E-1  & 4.881E-1  & 5.601E-2 \\ 
				WB-Exact & &  1.921E-7 &   1.413E-7 & 9.453E-8 \\ 
				WB-Discrete & &  4.219E-13 &  4.949E-13  & 2.997E-13 \\ \hline
			\end{tabular}
			\caption{$L_1$ error norms for density, velocity and pressure for hydrostatic solution for van der Waals equation of state}
			\label{tab:vdWerrornorm}
		\end{center}
	\end{table}
	\begin{figure}
		\begin{tabular}{ccc}
			\includegraphics[width=0.33\textwidth]{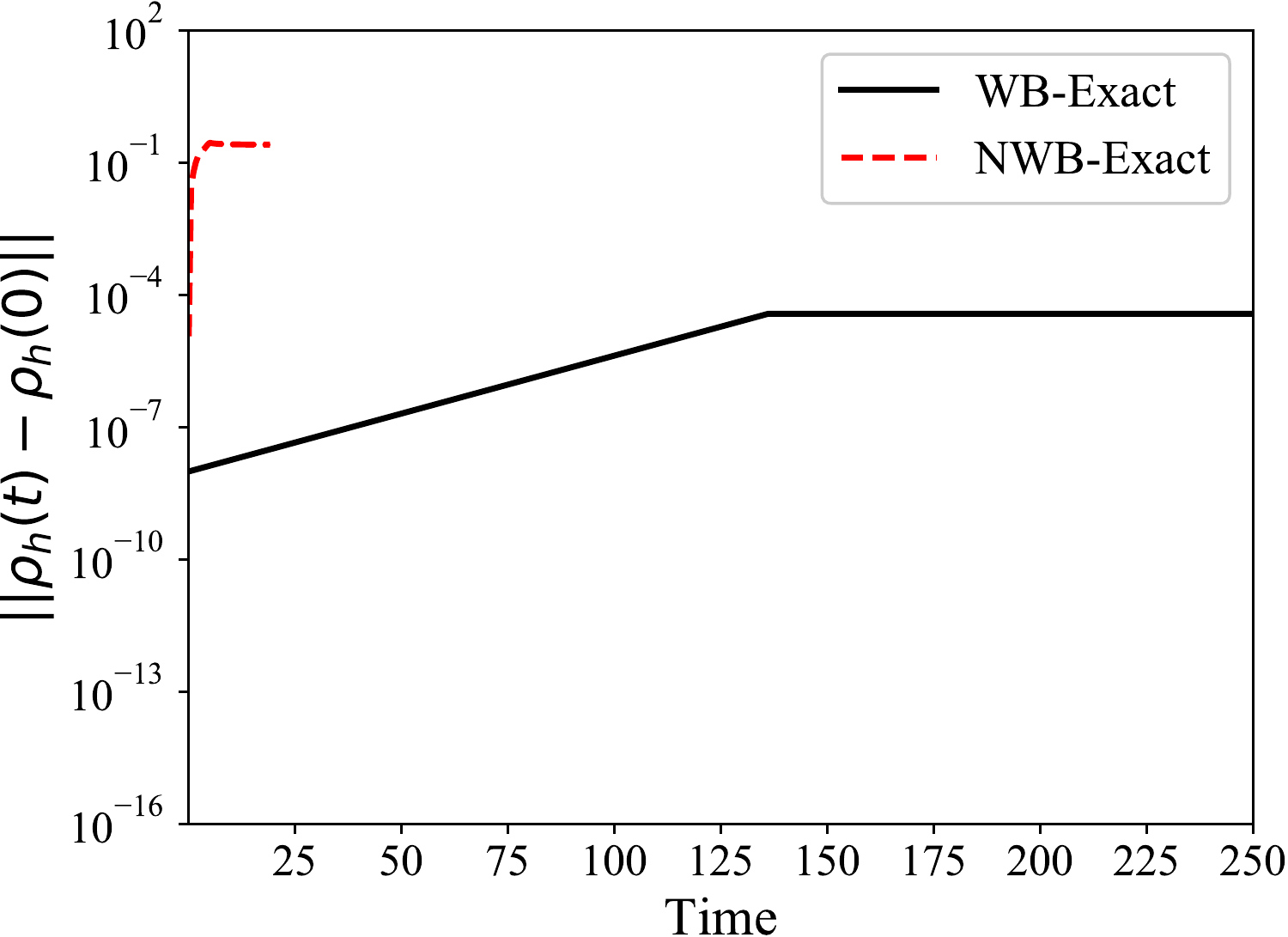} &
			\includegraphics[width=0.33\textwidth]{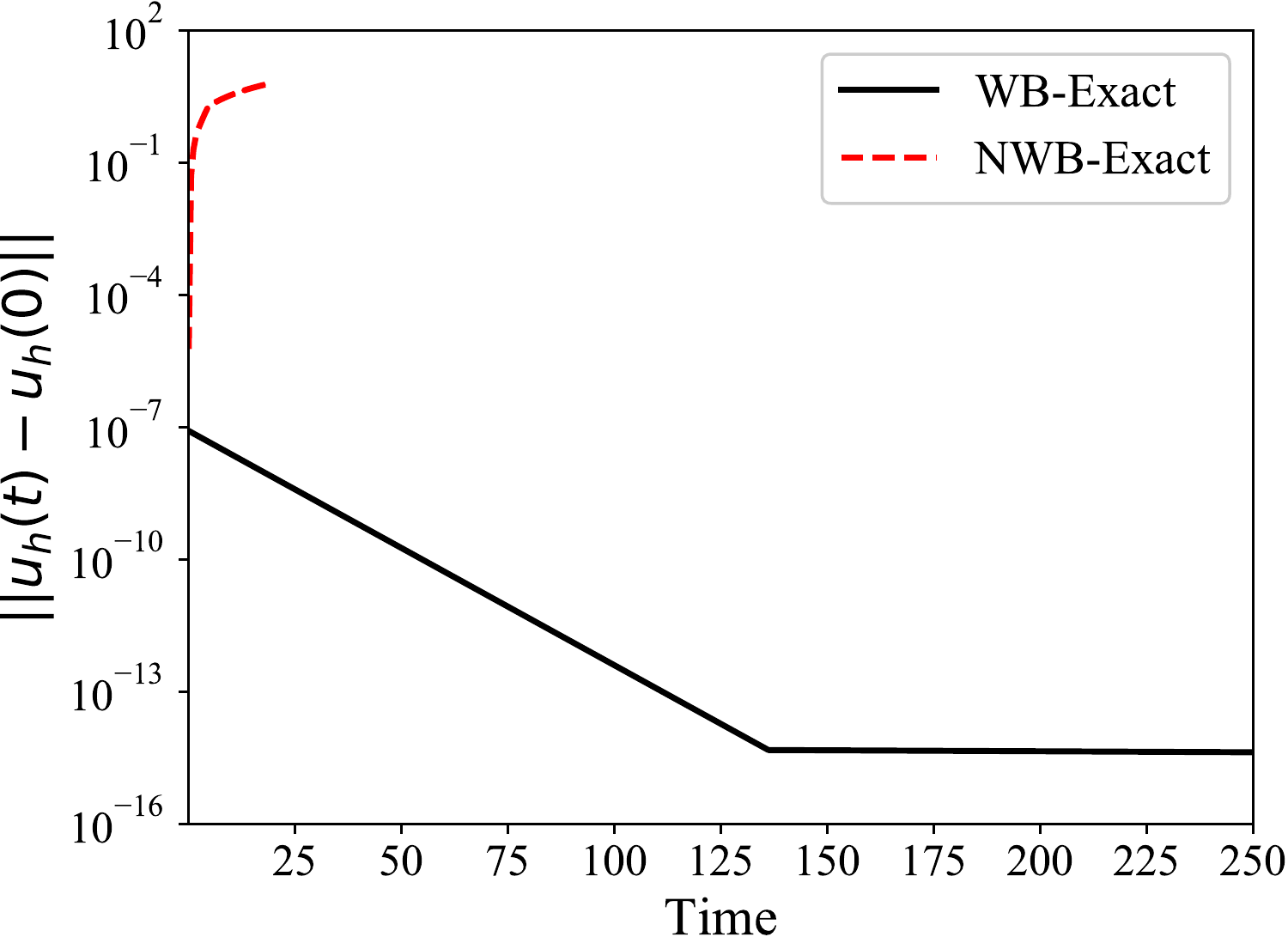} &
			\includegraphics[width=0.33\textwidth]{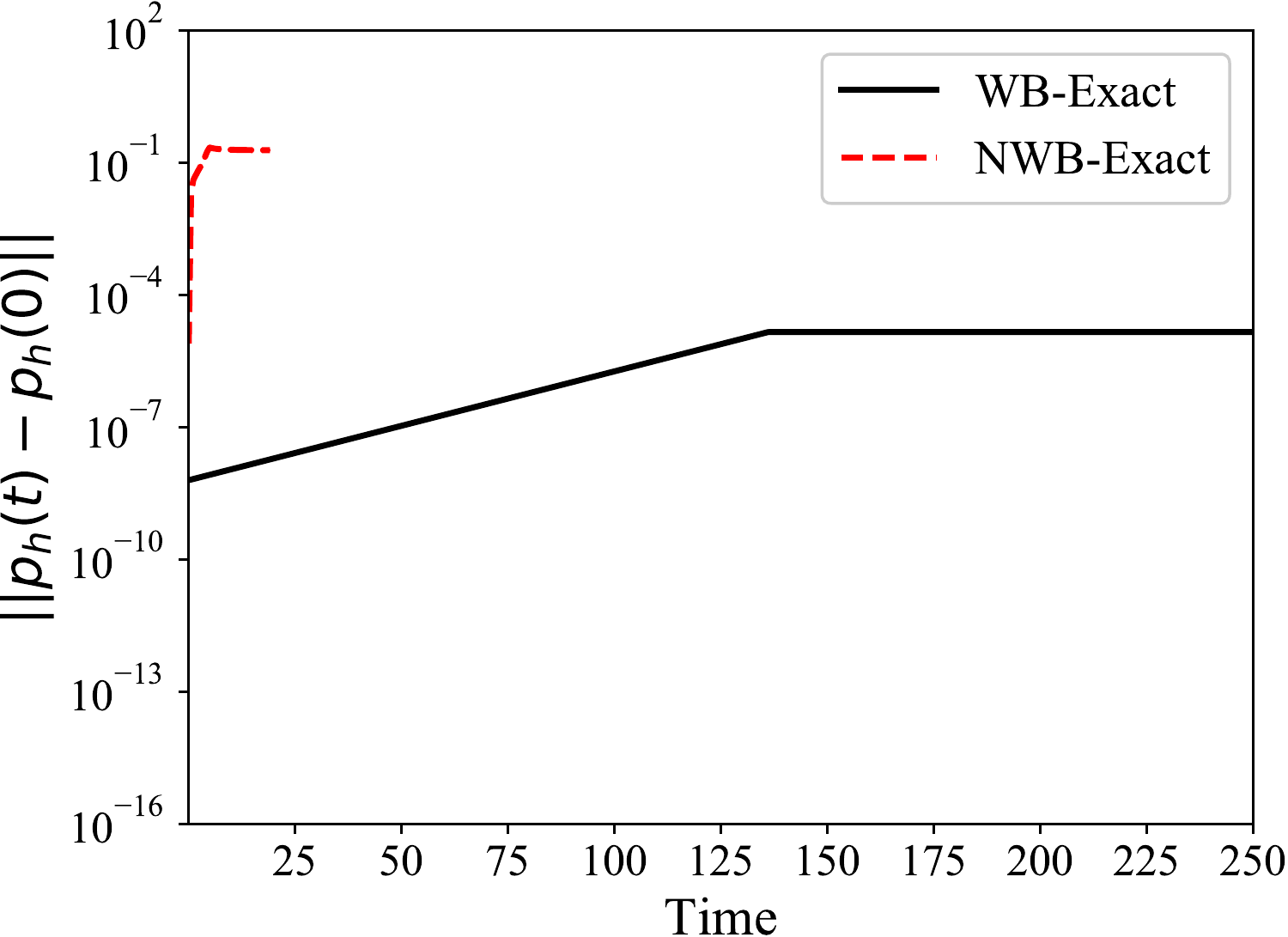} 
		\end{tabular}
		\caption{Stability of  exact hydrostatic solution for van der Waals equation : (a) density  (b) velocity (c) pressure}
		\label{fig:vdWexactlong}
	\end{figure}
\subsection{Perturbation from hydrostatic solution for van der Waals equation of state}
In this section, we test the ability of the well-balanced scheme to simulate the evolution of small perturbations from the discrete hydrostatic solution for van der Waals equation computed in section (\ref{sec:vdWhydro}). The perturbations are applied to the pressure and is given as 
\[
p_i = \widetilde{p_i} + \Delta p_i \qquad \text{where} \qquad \Delta p_i = \eta \exp(-100(x_i - 1/2)^2)
\] 
	Here, $\eta$ is equal to 0.1 or 0.001. The convergence of the solution is tested by performing simulations for two grid sizes of 100 and 1000 cells upto a final time of $t$ = 0.2 for the domain [0,1] using the well-balanced scheme. Transmissive boundary conditions are used for all the cases. Figure~(\ref{fig:perturbedvdW}a) shows that the solution has converged for the coarse mesh of 100 cells for a large perturbation of $\eta$ = 0.1. Now, we compare the solution with that obtained by using a non well-balanced scheme for the fine mesh as shown in figure ~(\ref{fig:perturbedvdW}b). It can be observed that even for such a large amplitude of perturbation and fine mesh, the non well-balanced scheme shows significant divergence from the converged solution. This divergence is also observed for the coarse mesh (not shown here). However, figure~(\ref{fig:perturbedvdW}c) shows that the well-balanced scheme provides a converged solution even for a small perturbation of $\eta$ = 0.001 and even on a coarse mesh. Therefore, it can be inferred that well-balancing is crucial for a real gas obeying the van der Waals equation of state and the proposed scheme is able to achieve this.\\
	\begin{figure}
		\begin{tabular}{ccc}
			\includegraphics[width=0.33\textwidth]{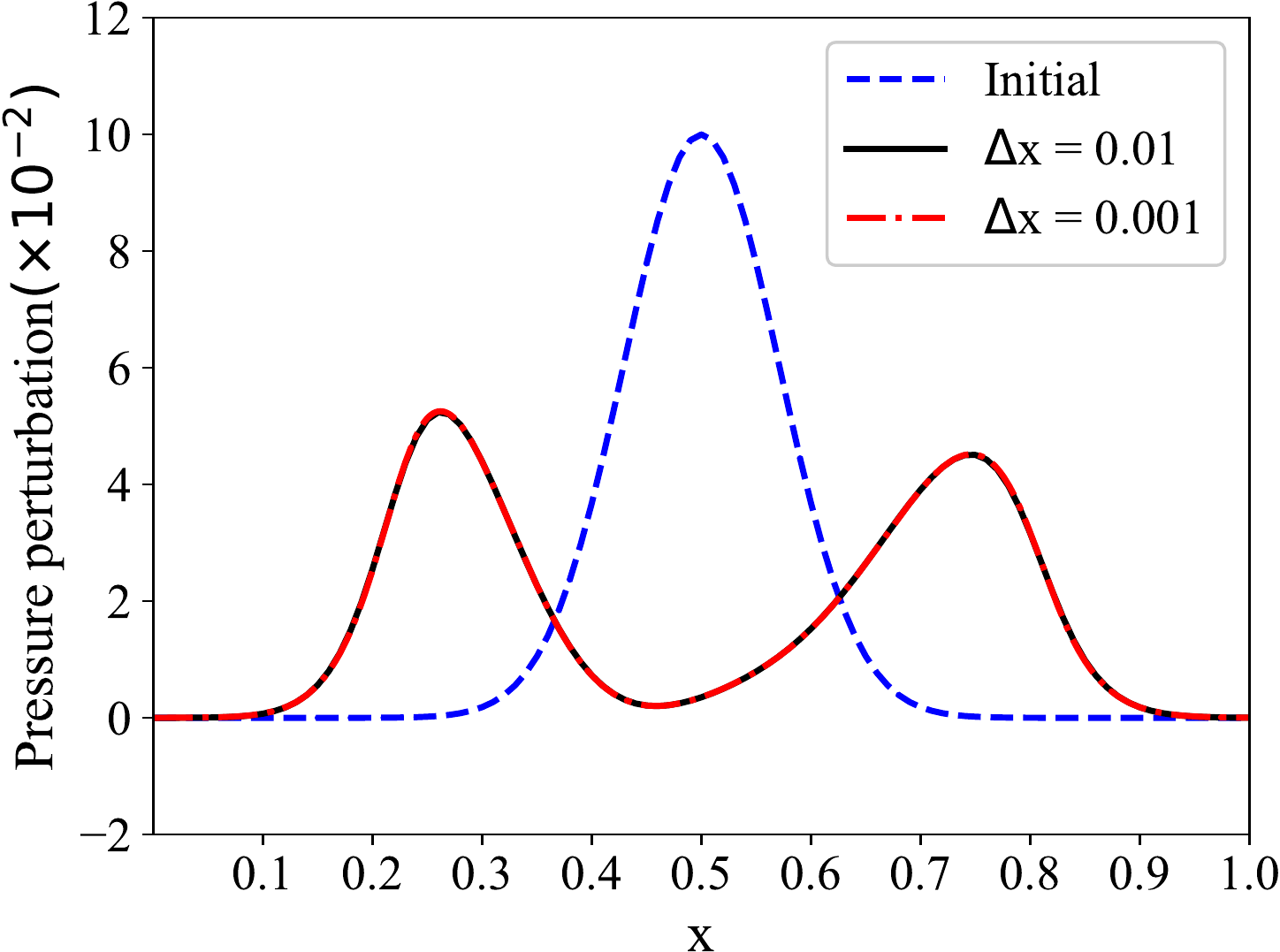} &
			\includegraphics[width=0.33\textwidth]{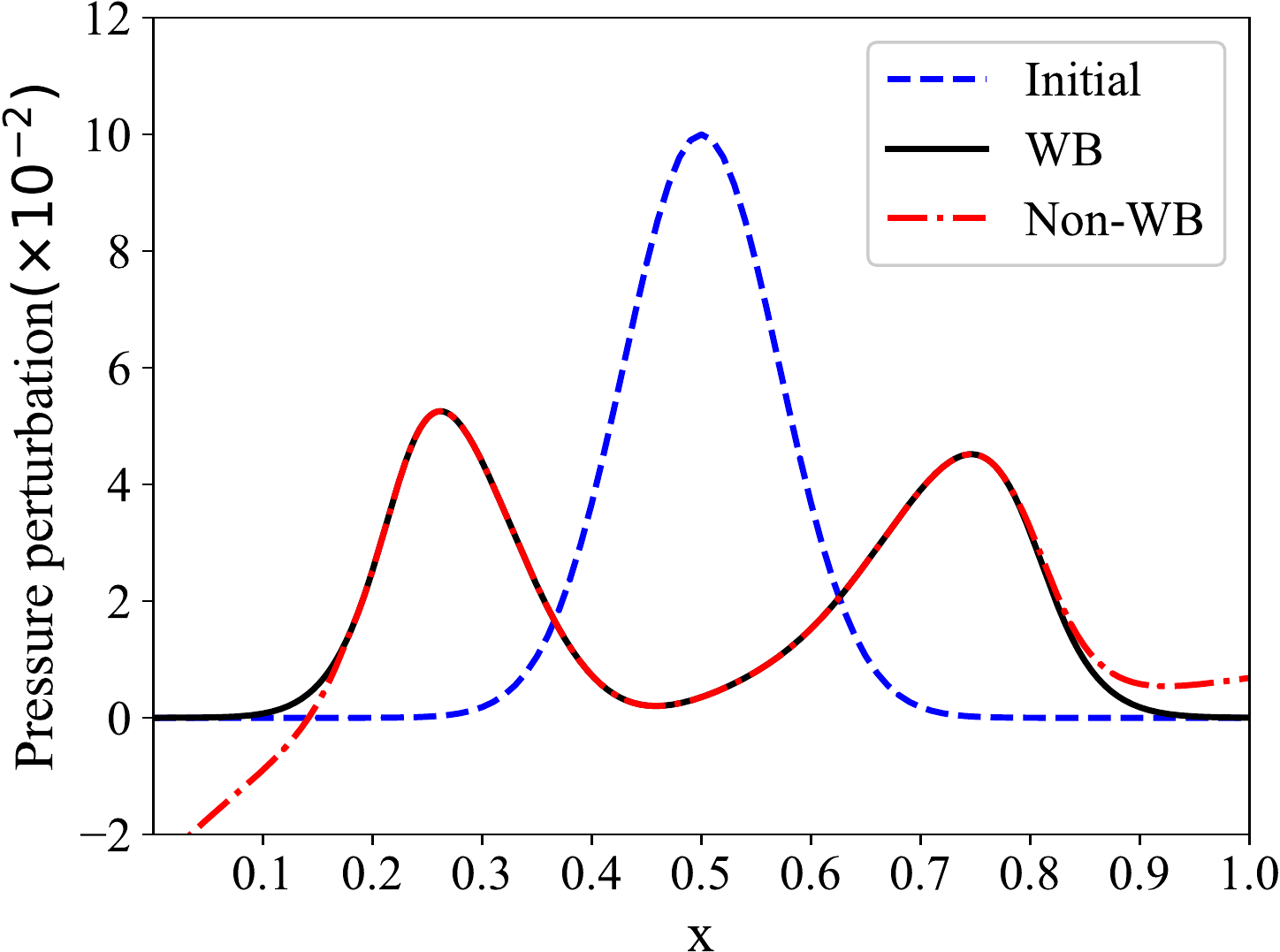} &
			\includegraphics[width=0.33\textwidth]{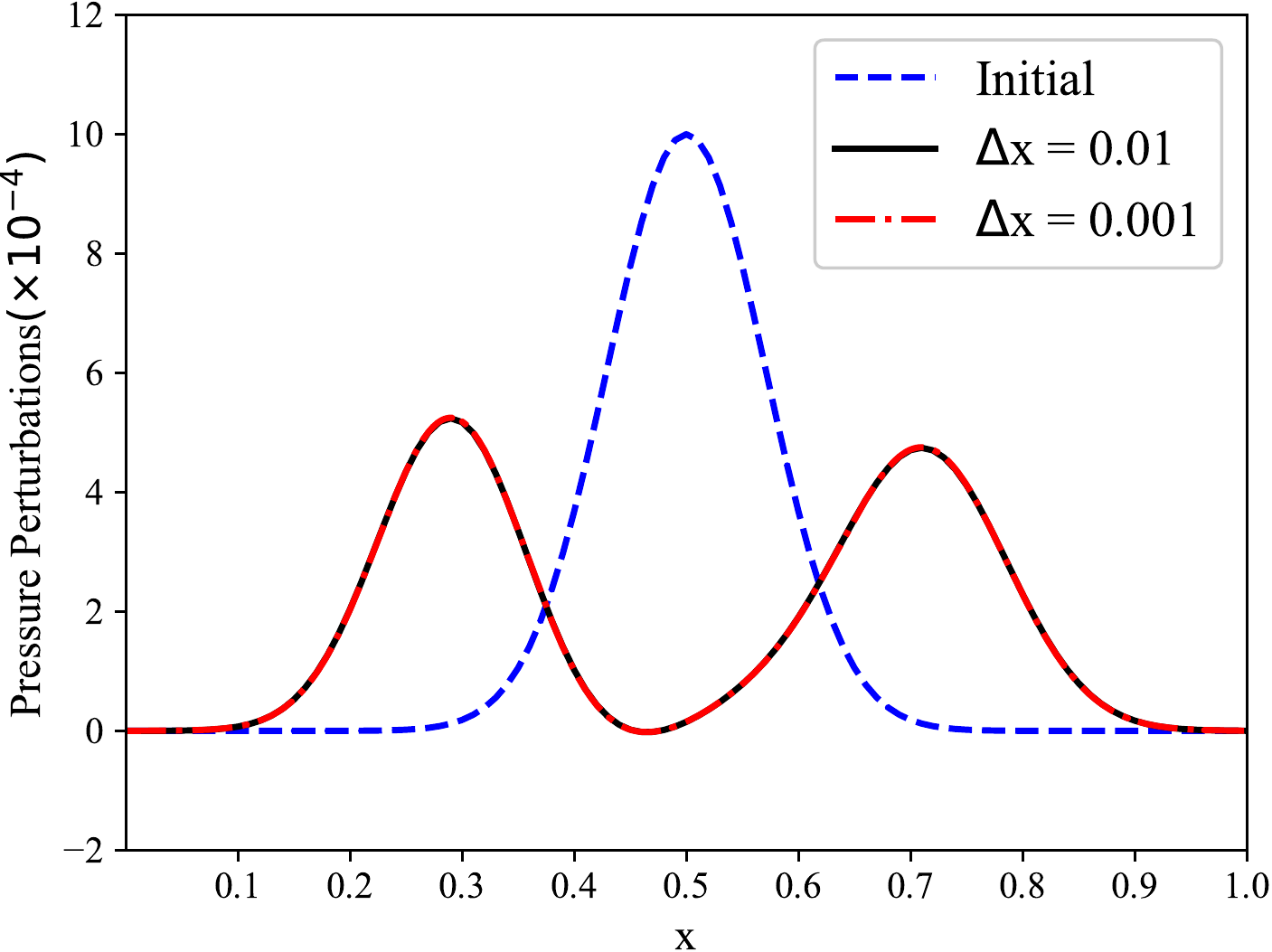}
		\end{tabular}
		\caption{Pressure perturbations from van der Waals discrete hydrostatic state : (a) $\eta$ = 0.1 (b) $\eta$ = 0.1, $\Delta x$ = 0.001 (c) $\eta$ = 0.001}
		\label{fig:perturbedvdW}
	\end{figure}
In order to obtain a reference solution to compare our results, we solve a linearised version of the Euler equations derived for the perturbations of the primitive variables $\left[\rho, u, p\right]$. Here, we apply the same initial pressure perturbation as before, with the amplitude of perturbation $\eta$ decreased to $10^{-5}$. This system of equations is solved over a domain [-1,2] using a central difference scheme. fourth order Runge-Kutta scheme and a grid size of 3000 cells. Figure~(\ref{fig:vdWref}a) shows a part of the domain i.e [0,1] in which solutions obtained from our well-balanced scheme compare favorably with the solutions from the linear Euler equations. This provides further evidence that the well-balanced scheme for the full Euler equations is able to compute small perturbations in a reliable way. Now, we compare this well-balanced solution with that obtained using a non well-balanced scheme for a very fine mesh of 300000 cells over the domain [-1,2]. From figure~(\ref{fig:vdWref}b), we observe that the solution obtained using the non well-balanced scheme shows a significant divergence from the correct solution even for such a fine mesh, again underscoring the importance of using a well balanced scheme.
	\begin{figure}
		\begin{tabular}{cc}
			\includegraphics[width=0.48\textwidth]{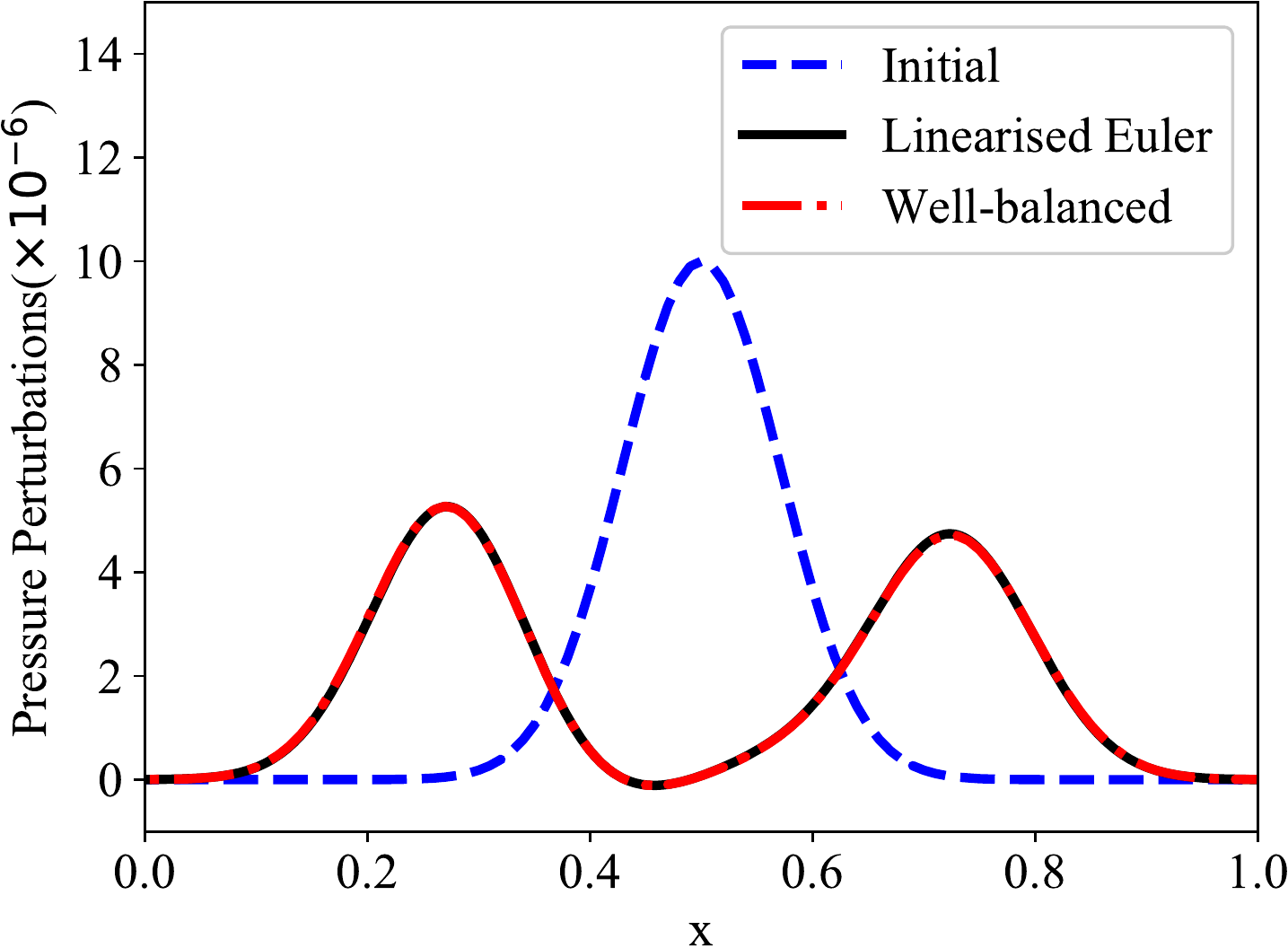} &
			\includegraphics[width=0.48\textwidth]{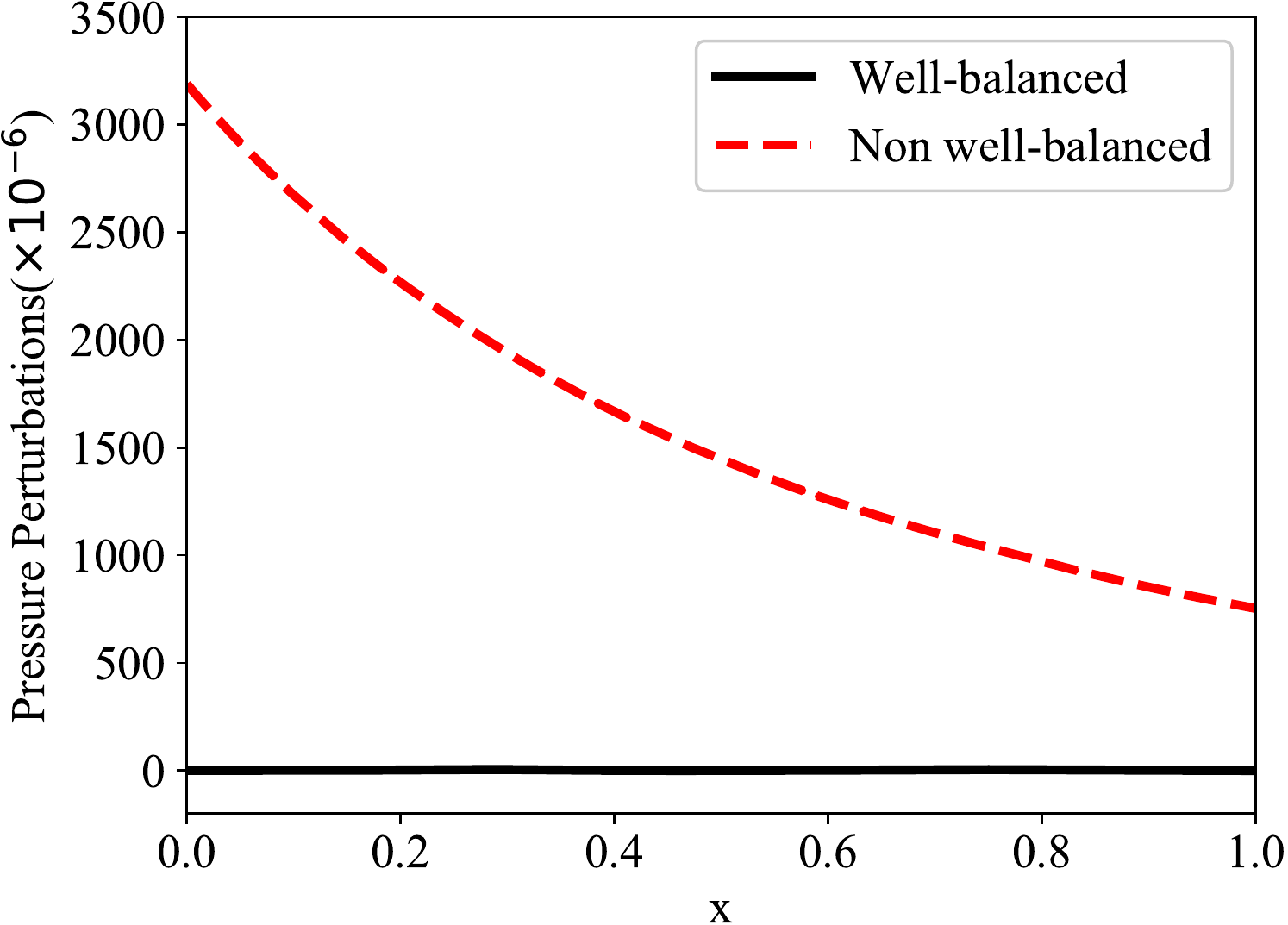}
		\end{tabular}
		\caption{Comparison of evolution of pressure perturbations of amplitude $\eta=10^{-5}$ from van der Waals hydrostatic state with (a) reference solution (b) solution from non well-balanced scheme.}
		\label{fig:vdWref}
	\end{figure}
\subsection{Sod test under gravitational field}
To study the performance of the scheme for discontinuous problems, we solve the Sod problem~\cite{Sod1978} under a gravitational field with potential $\phi (x) $ = $x$ in a domain of [0,1] with solid wall boundary conditions. The initial conditions are given by
\[
(\rho,u,p) = \begin{cases}
(1,0,1) & \textrm{if } x < \frac{1}{2}\\
(0.125,0,0.1) & \textrm{if } x>  \frac{1}{2}
\end{cases}
\]
The simulations are performed upto a final time $t$ = 0.2 with two grids of sizes 200 and 2000 cells using minmod based reconstruction scheme. To test the accuracy of the solution, we compare the results with a reference solution obtained using the CRWENO5 scheme on a fine mesh of 2000 cells. For the coarse mesh, the solutions for density and pressure near the shock region obtained using our well-balanced scheme is compared with that obtained for CRWENO5 scheme with the same grid size in figure~ (\ref{fig:sodzoomed}). It can be observed that the current second order well-balanced scheme achieves reasonable accuracy compared to a 5'th order WENO scheme. The results for various parameters are illustrated in figures~(\ref{fig:sod}). We observe that the solutions compare favourably with that obtained using the higher order reference scheme and the scheme is able to capture the features of the solution even for a coarse mesh with grid spacing, $\Delta x$ = 0.005. Since the gravity force is acting towards the left boundary, the density and velocity distributions are being pulled in that direction. We can see that the solutions do not have spurious oscillations even in the presence of shocks and contact discontinuities.
\begin{figure}
\begin{center}
\begin{tabular}{cc}
\includegraphics[width=0.4\textwidth]{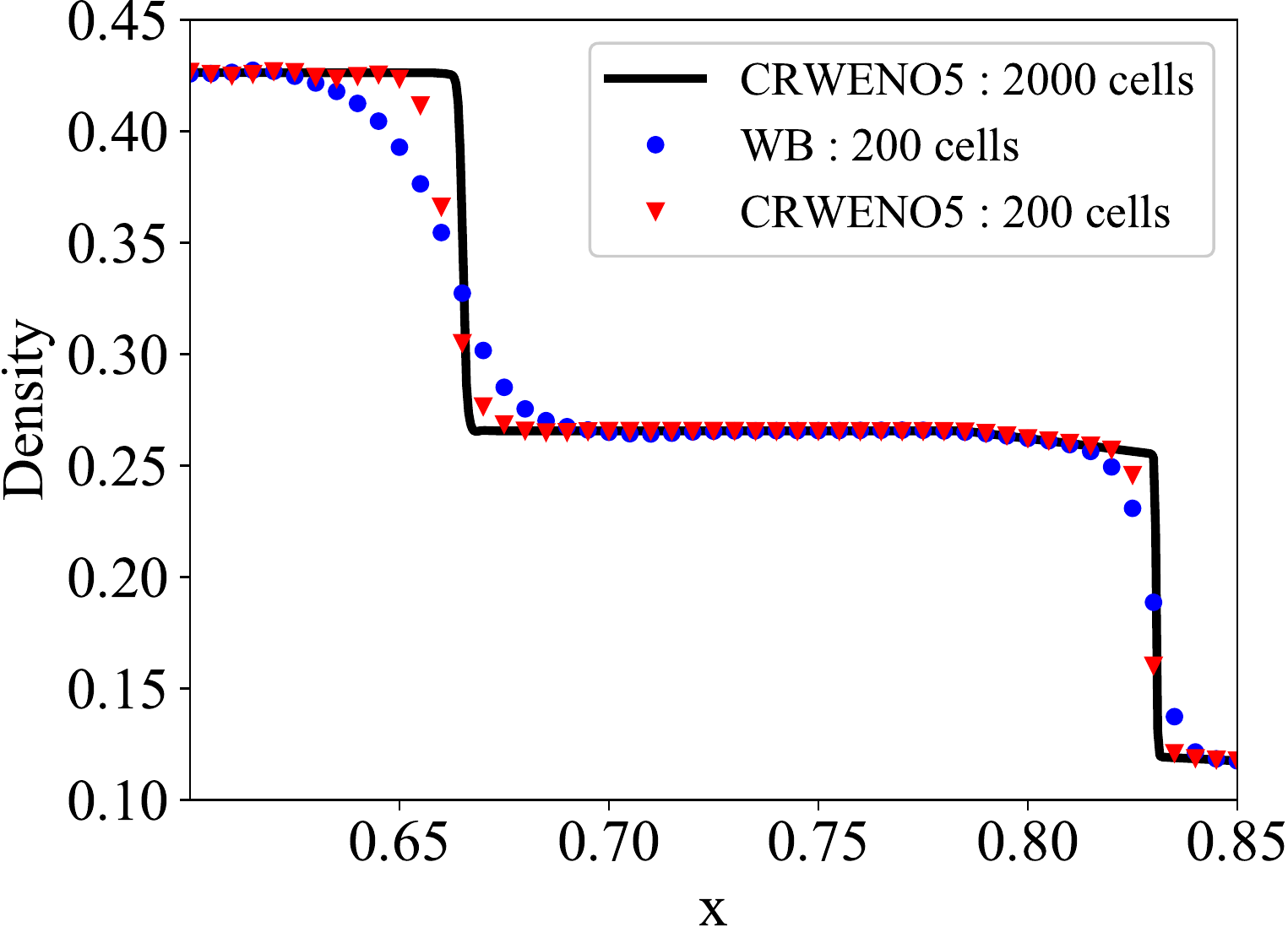} &
\includegraphics[width=0.4\textwidth]{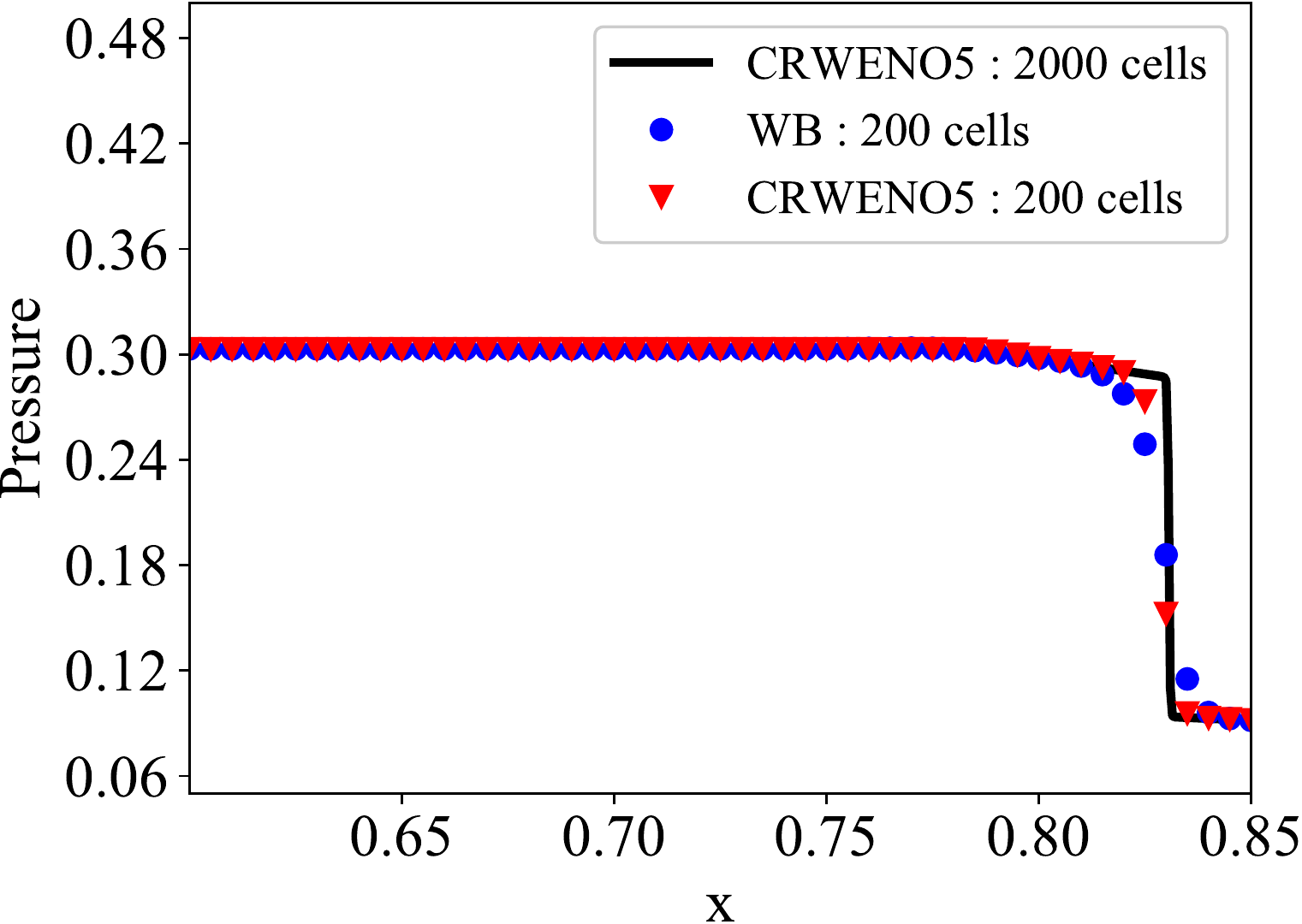} \\
(a)  & (b)  
\end{tabular}
\caption{Comparison of zoomed view near the shock region for Sod test under a linear gravitational field with CRWENO5 scheme for (a) density (b) pressure}
\label{fig:sodzoomed}
\end{center}
\end{figure}

\begin{figure}
\begin{center}
\begin{tabular}{cc}
\includegraphics[width=0.4\textwidth]{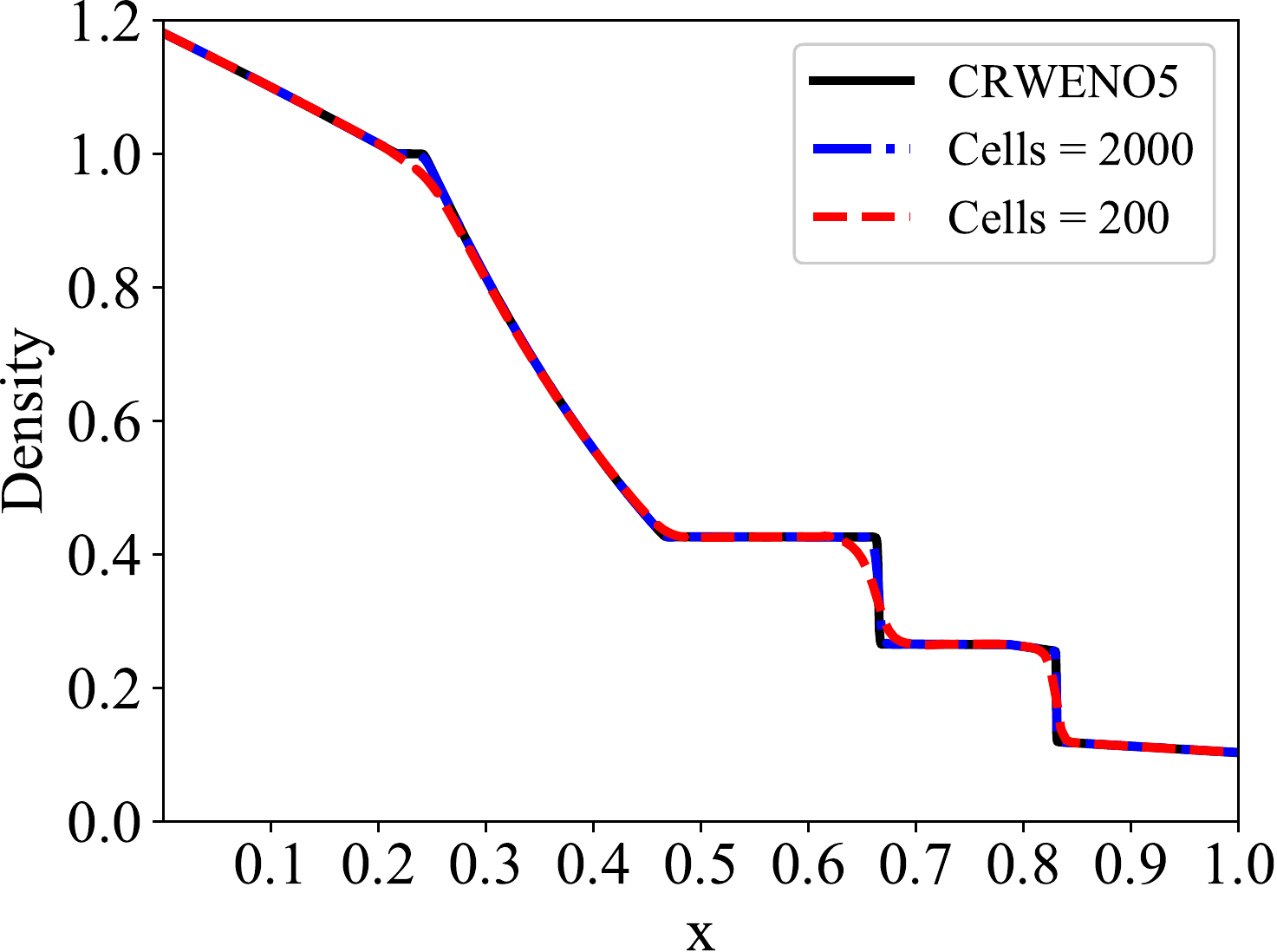} &
\includegraphics[width=0.4\textwidth]{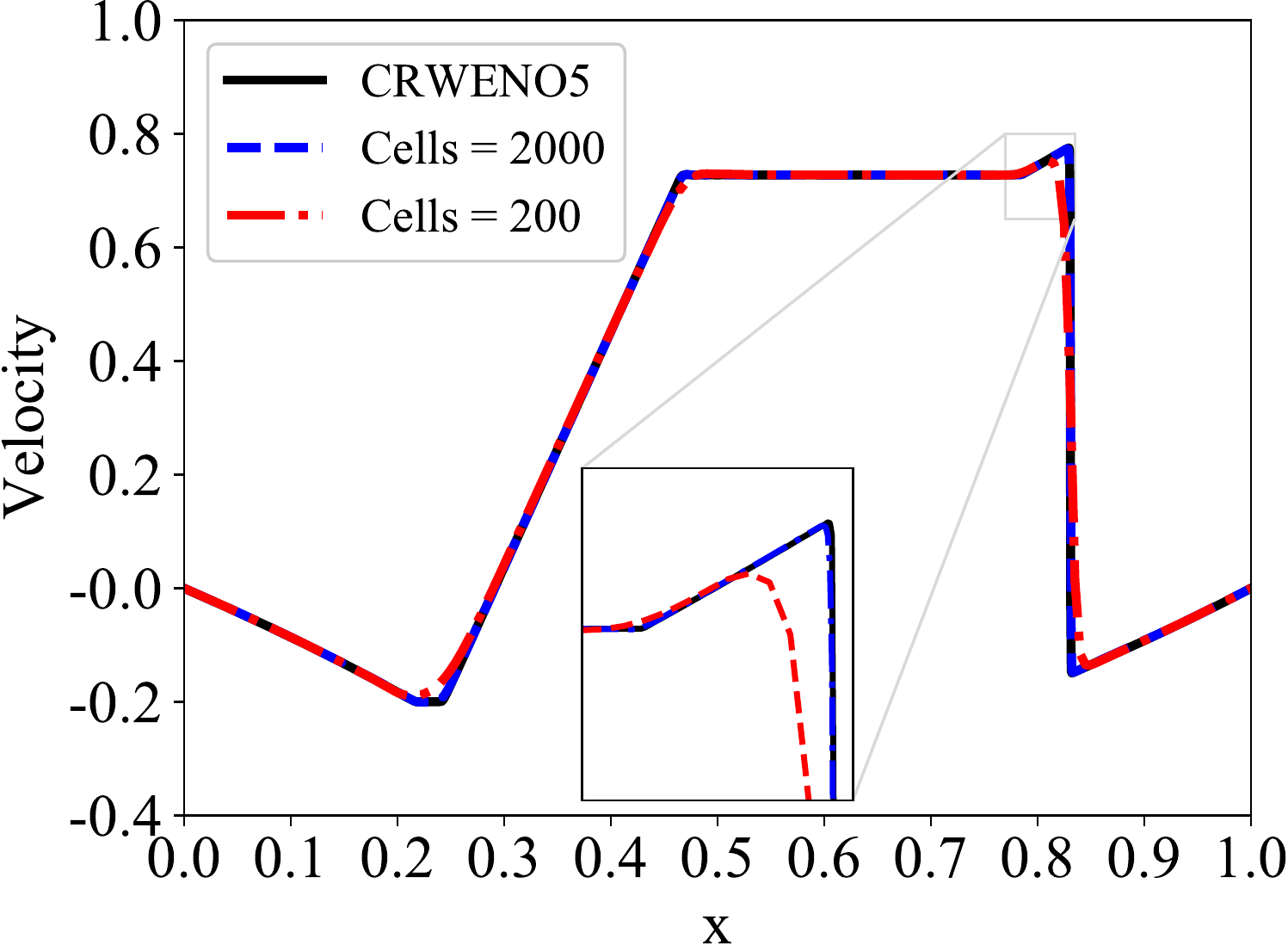} \\
(a)  & (b)  \\
\includegraphics[width=0.4\textwidth]{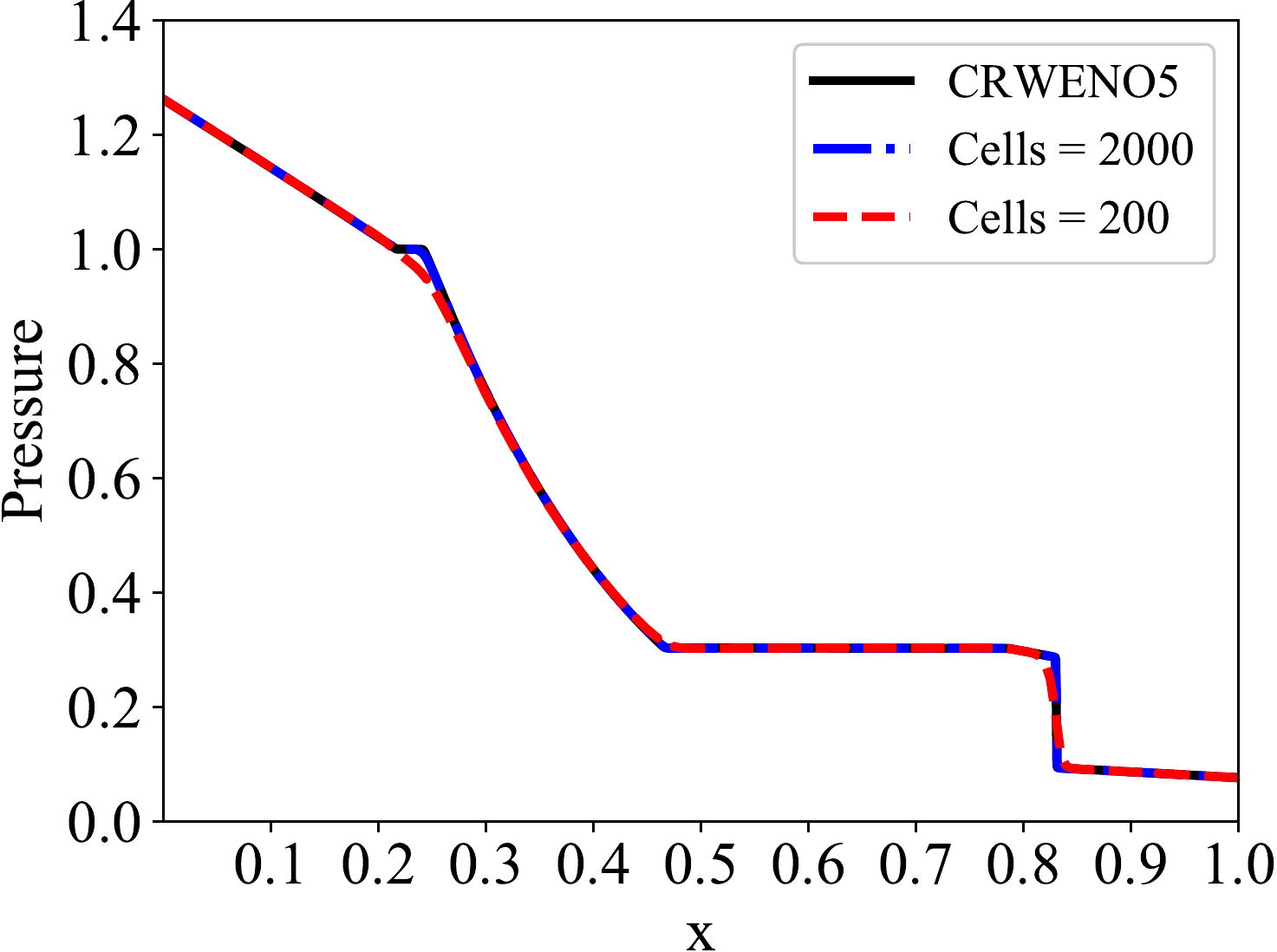} &
\includegraphics[width=0.4\textwidth]{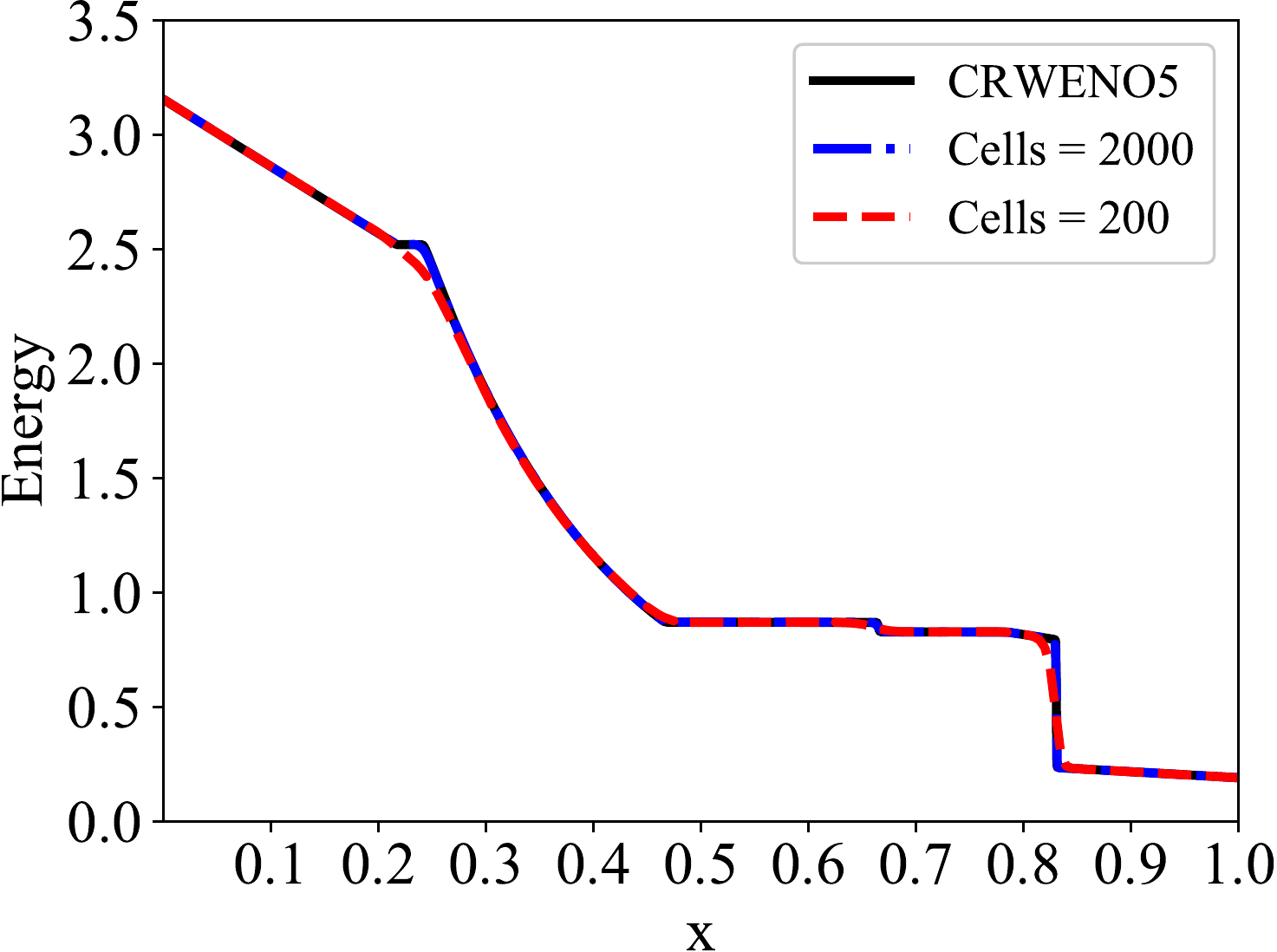} \\
(c)  & (d) \\
\end{tabular}
\caption{Sod test under a linear gravitational field. (a) density, (b) velocity, (c) pressure, (d) energy }
\label{fig:sod}
\end{center}
\end{figure}
\subsection{Contact discontinuity under gravitational field}
In this test case, the initial condition has a contact discontinuity in a  domain of size [0,1] under a gravitational field of $\phi(x)$ = $x$. Without gravity this would be a stationary solution. Solid wall boundary conditions are used at both ends of the domain and initial conditions are  given by
\[
(\rho,u,p) = \begin{cases}
(1,0,1) & \textrm{if } x < \frac{1}{2}\\
(10,0,1) & \textrm{if } x>  \frac{1}{2}
\end{cases}
\]
The simulations are performed upto a final time $t$ = 0.6 with two grids of sizes 200 and 2000 cells. The results are compared with those obtained from the non well-balanced scheme. We observe that both the schemes produce similar results even on the coarse mesh as shown in figure~(\ref{fig:contact}). This demonstrates that the present scheme performs similarly to standard schemes for discontinuous problems.
\begin{figure}
\begin{center}
\begin{tabular}{cc}
\includegraphics[width=0.4\textwidth]{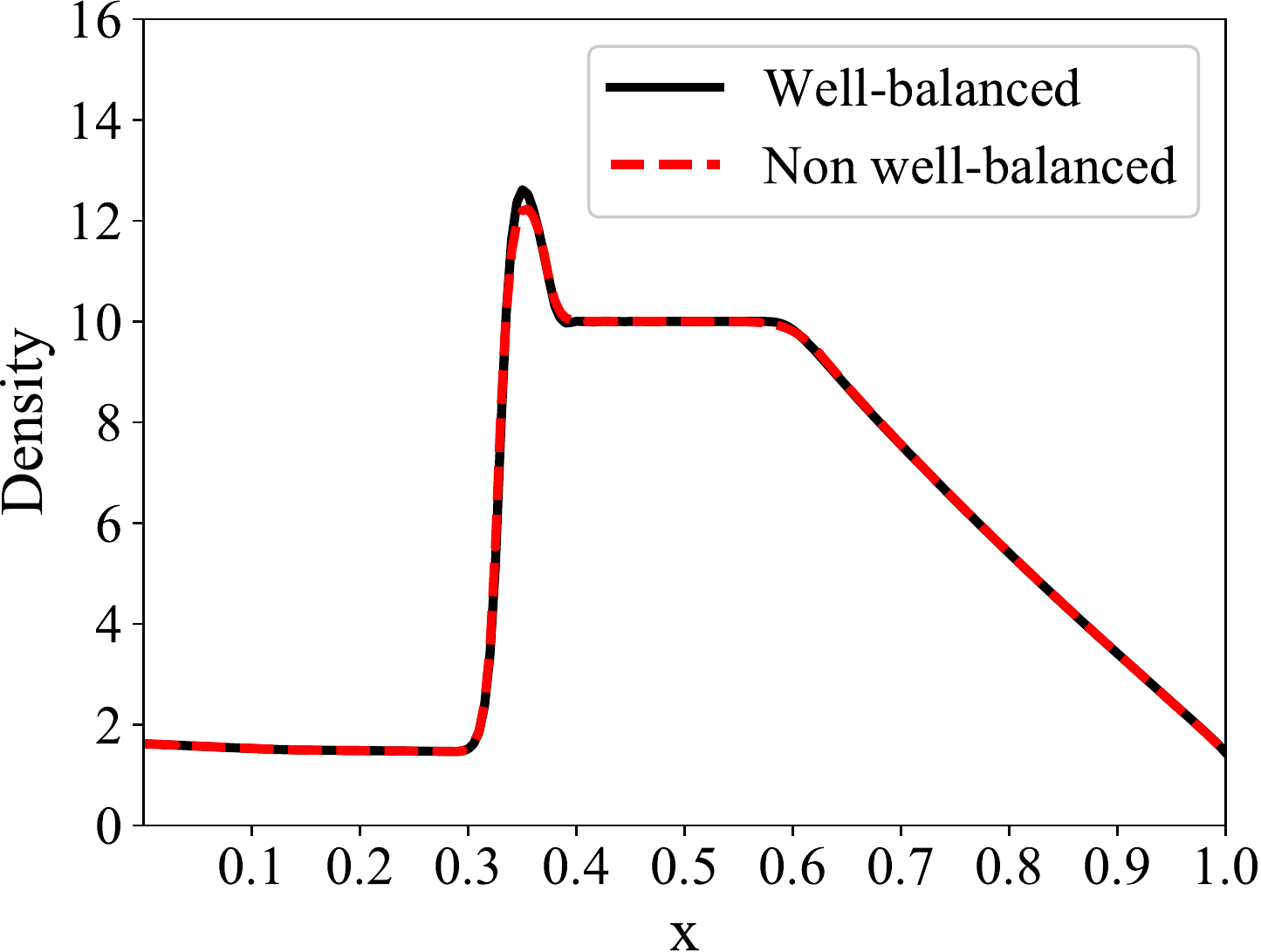} &
\includegraphics[width=0.4\textwidth]{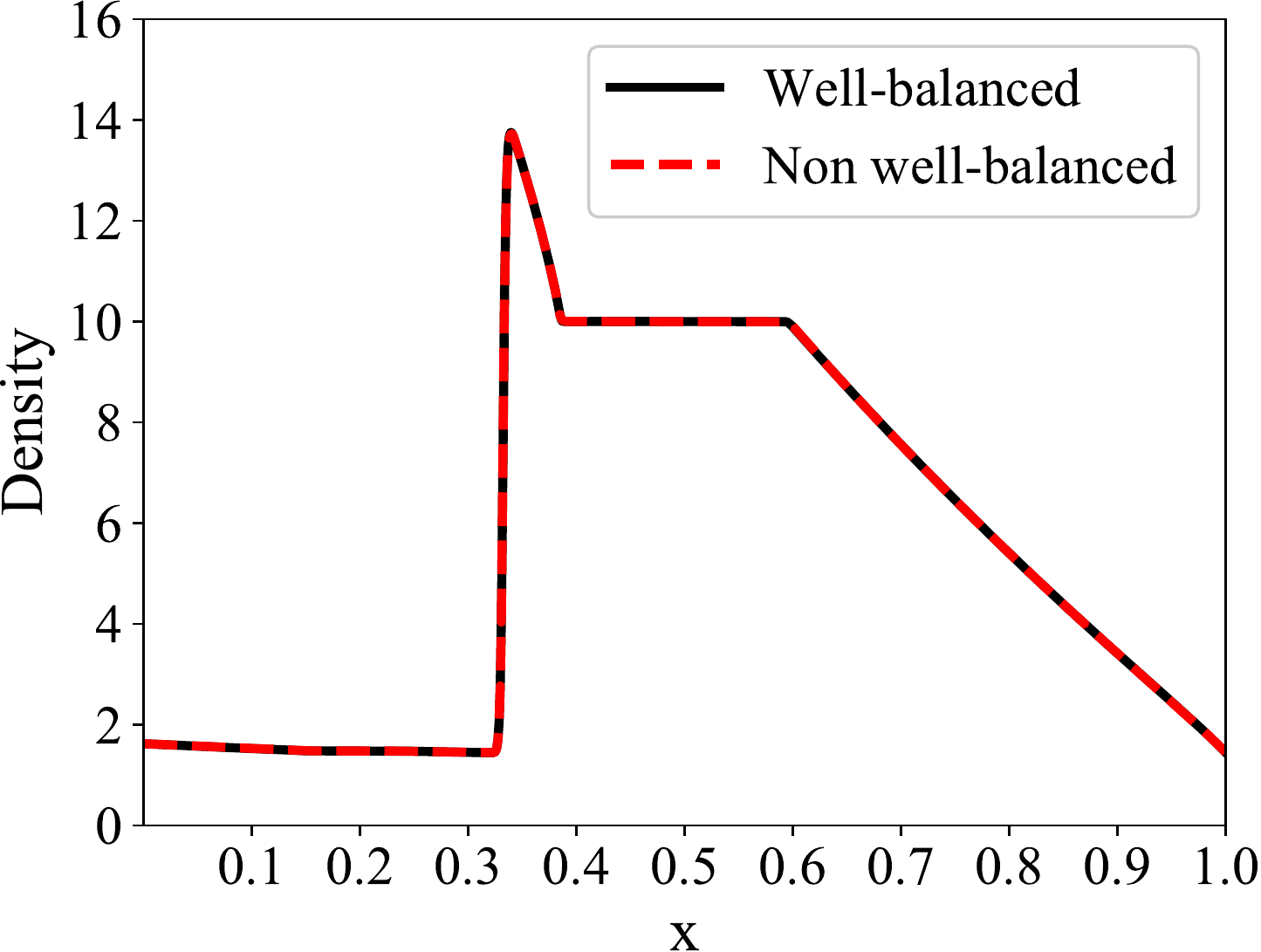}\\
(a)  & (d)  \\
\includegraphics[width=0.4\textwidth]{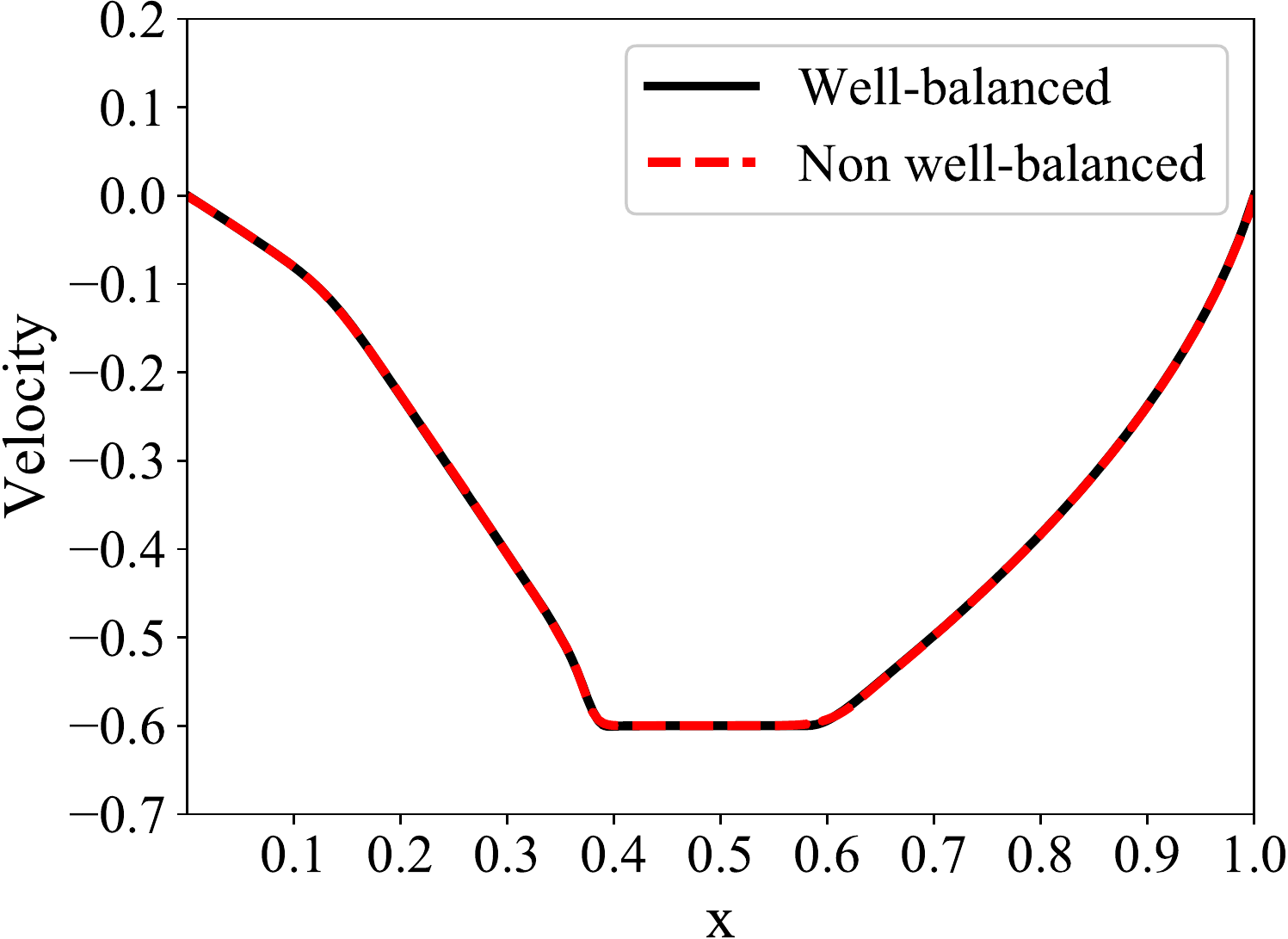} &
\includegraphics[width=0.4\textwidth]{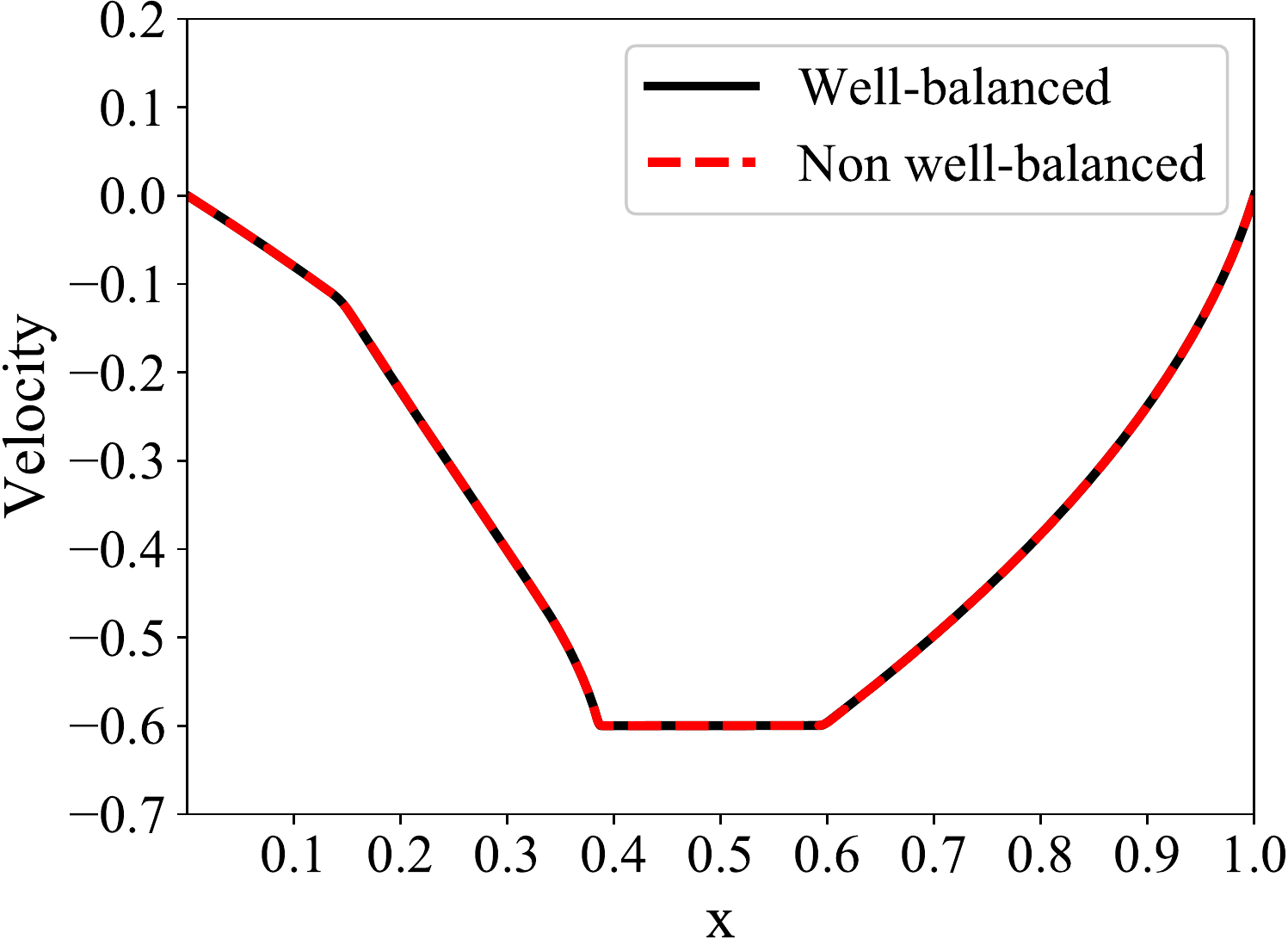}\\
(b)  & (e)  \\
\includegraphics[width=0.4\textwidth]{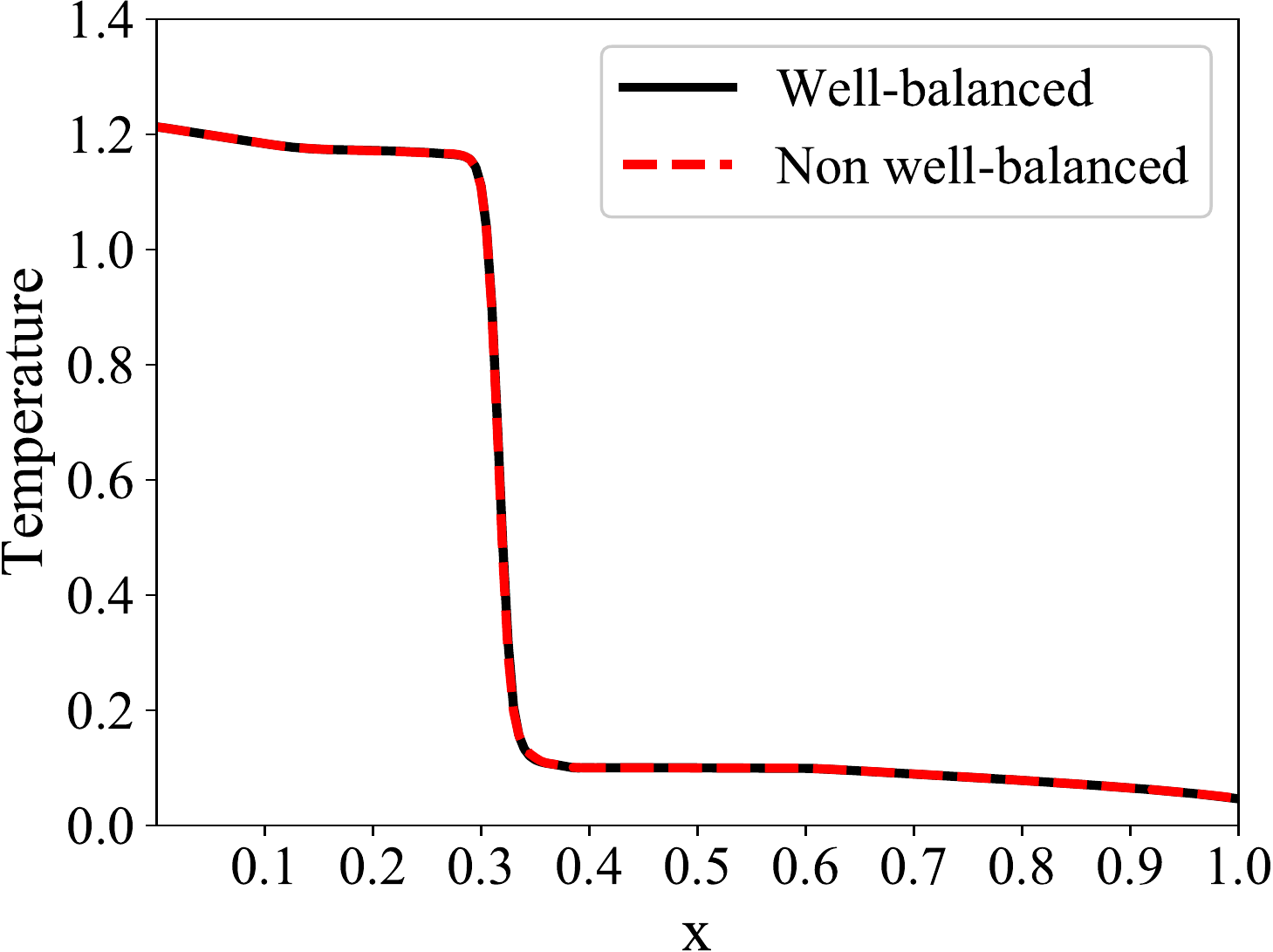} &
\includegraphics[width=0.4\textwidth]{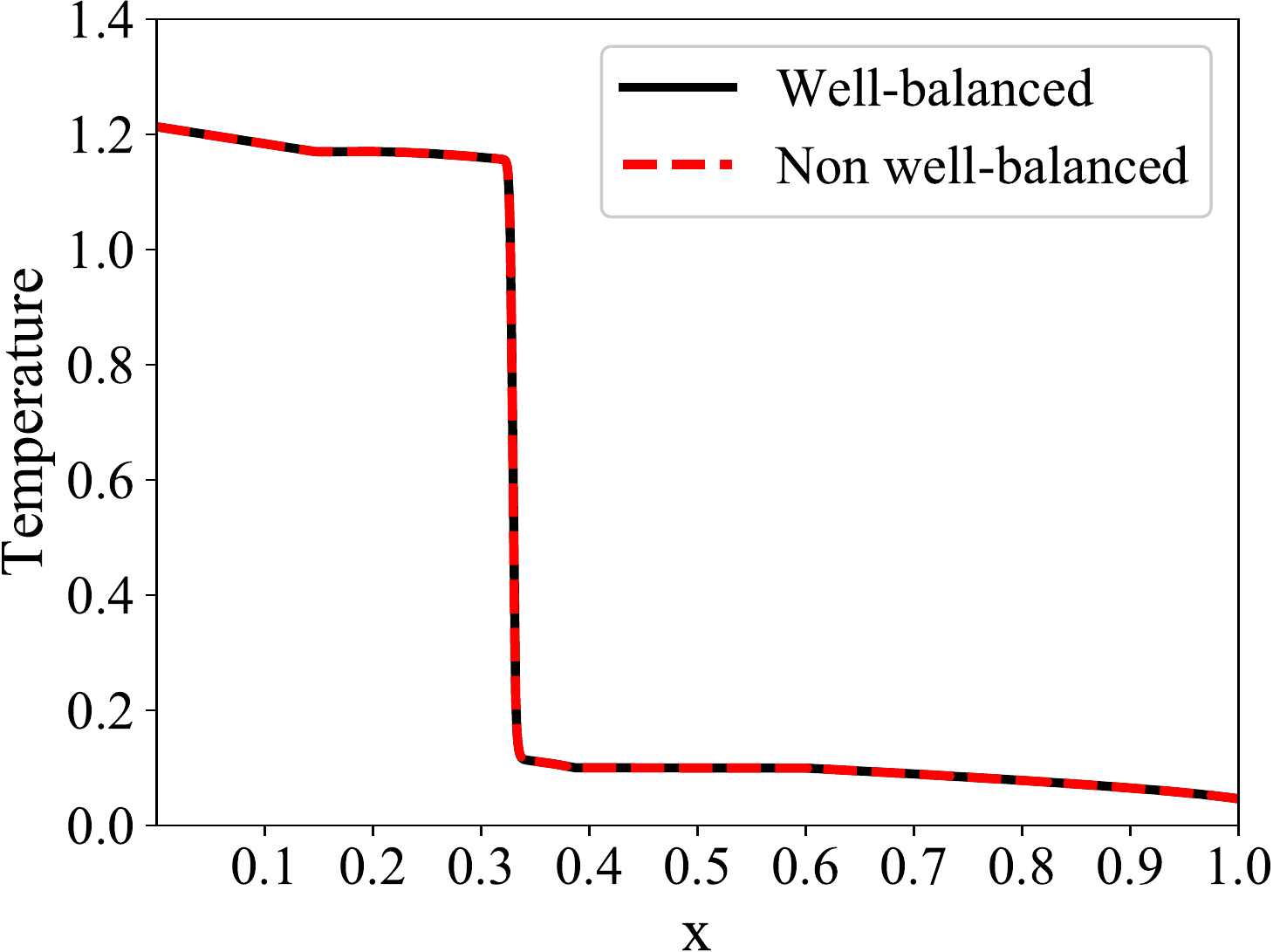}\\
(c)  & (f) \\
\end{tabular}
\caption{Contact Discontinuity under a linear gravitational field. 1st Column - 200 cells; 2nd Column - 2000 cells }
\label{fig:contact}
\end{center}
\end{figure}
\section{2-D numerical results}
\label{sec:res2d}
All the test cases make use of HLLC flux and MUSCL reconstruction. The specific heat ratio is $\gamma = 1.4$ and gas constant is equal to unity.
\subsection{Perturbation from isothermal hydrostatic solution}
\label{sec:2diso}
We consider an initial condition~\cite{Li2016} in which an isothermal hydrostatic condition is imposed on a unit square domain with transmissive boundary conditions and a gravitational potential defined as $\phi(x,y)$  = $x+y$. The hydrostatic state is defined as follows:
\[
\rhoe(x,y) = \rho_0 \exp(-\rho_0  \phi(x,y)/p_0), \qquad \pe(x, y) = p_0 \exp( -\rho_0 \phi(x,y)/p_0)
\]
where $\rho_0 = 1.21$ and $p_0 = 1$. To study the performance of the scheme in resolving small perturbations, we add an initial perturbation to the hydrostatic pressure distribution as follows
\[
p(x,y,0) = \pe(x,y,0) + \eta \exp(-100 \rho_0 ((x-0.3)^2 + (y -0.3)^2)/p_0)
\]
The simulation is performed on a grid of  $51 \times 51$ upto a final time of $t = 0.15$ units and the results for two different values of amplitude of pressure perturbations ($\eta$ = 0.1, 0.001) are compared with a non well-balanced scheme in which the source terms are computed using central differences. Also, to test the accuracy, a reference solution using CRWENO5 is computed using the same parameters. From figures~(\ref{fig:2disothermaleta1}) and (\ref{fig:2disothermaleta001}), we can see that the non well-balanced schemes distorts the small perturbations and this become more severe when the perturbation is small. The well-balanced scheme is able to resolve the pressure perturbations  for both amplitudes and does not suffer from loss of accuracy for very small perturbations. It can also be observed that the solutions obtained from the current second-order well-balanced scheme is quite comparable with the CRWENO5 solutions.
\begin{figure}
\begin{center}
	\begin{tabular}{ccc}
		\includegraphics[width=0.32\textwidth]{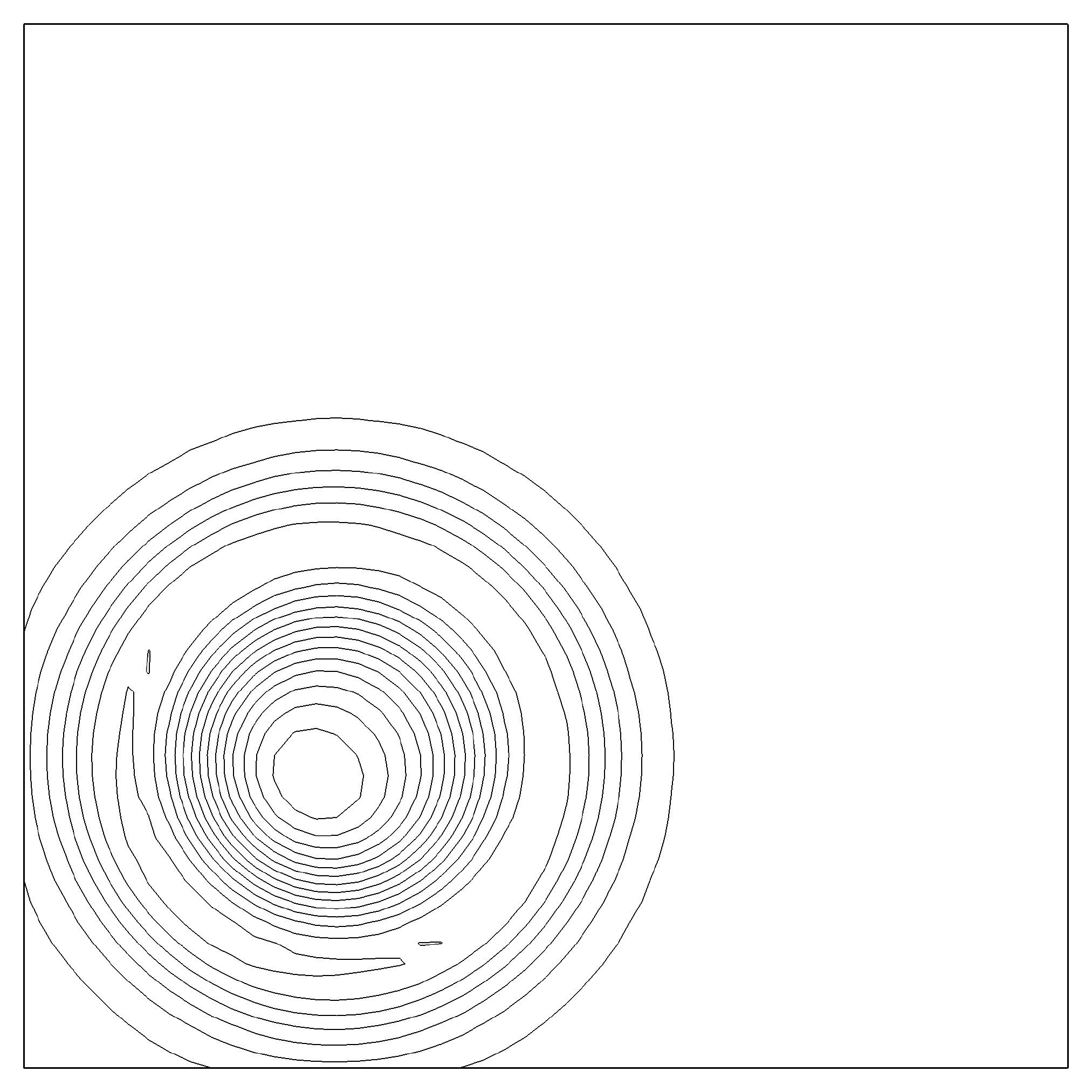} &
		\includegraphics[width=0.32\textwidth]{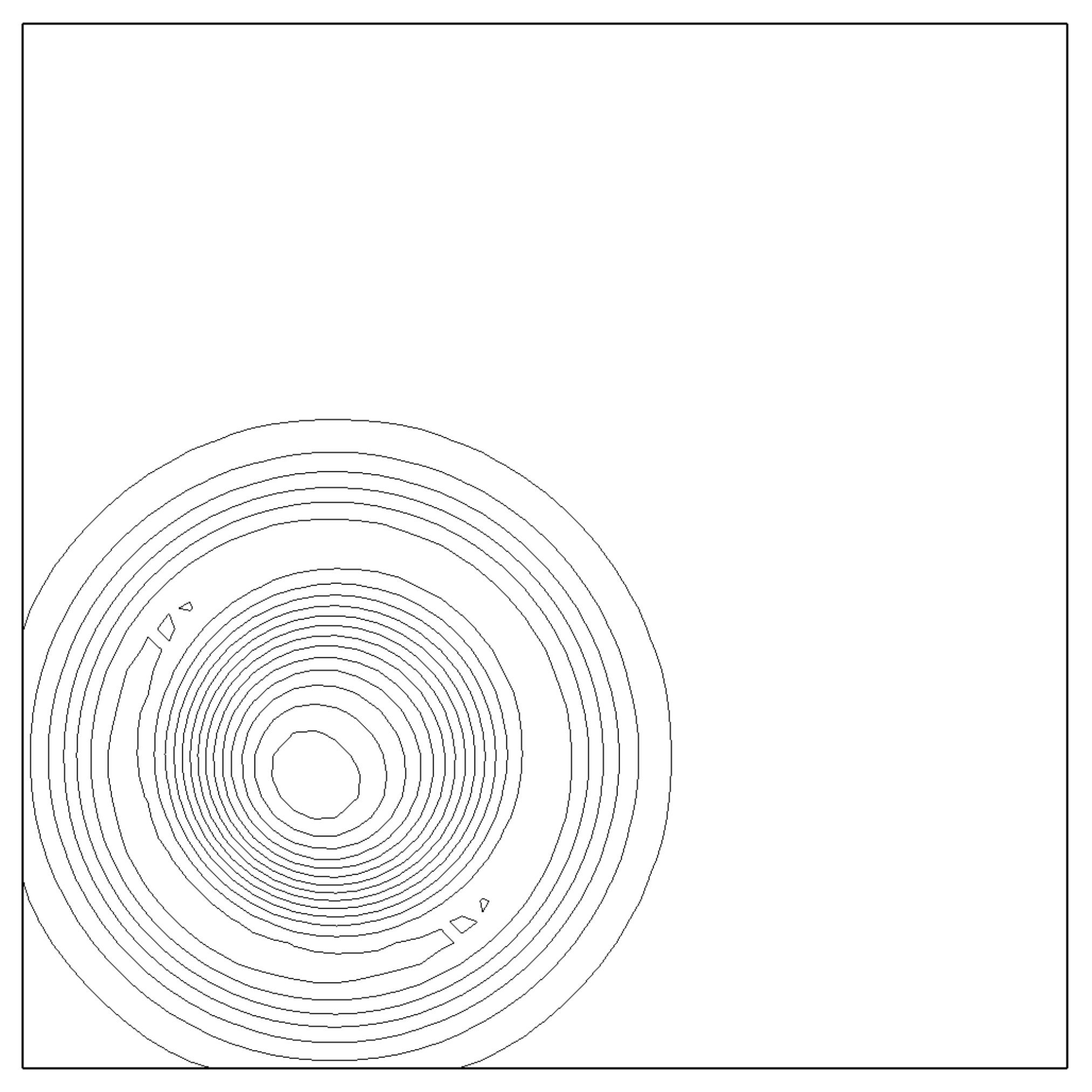} &
		\includegraphics[width=0.32\textwidth]{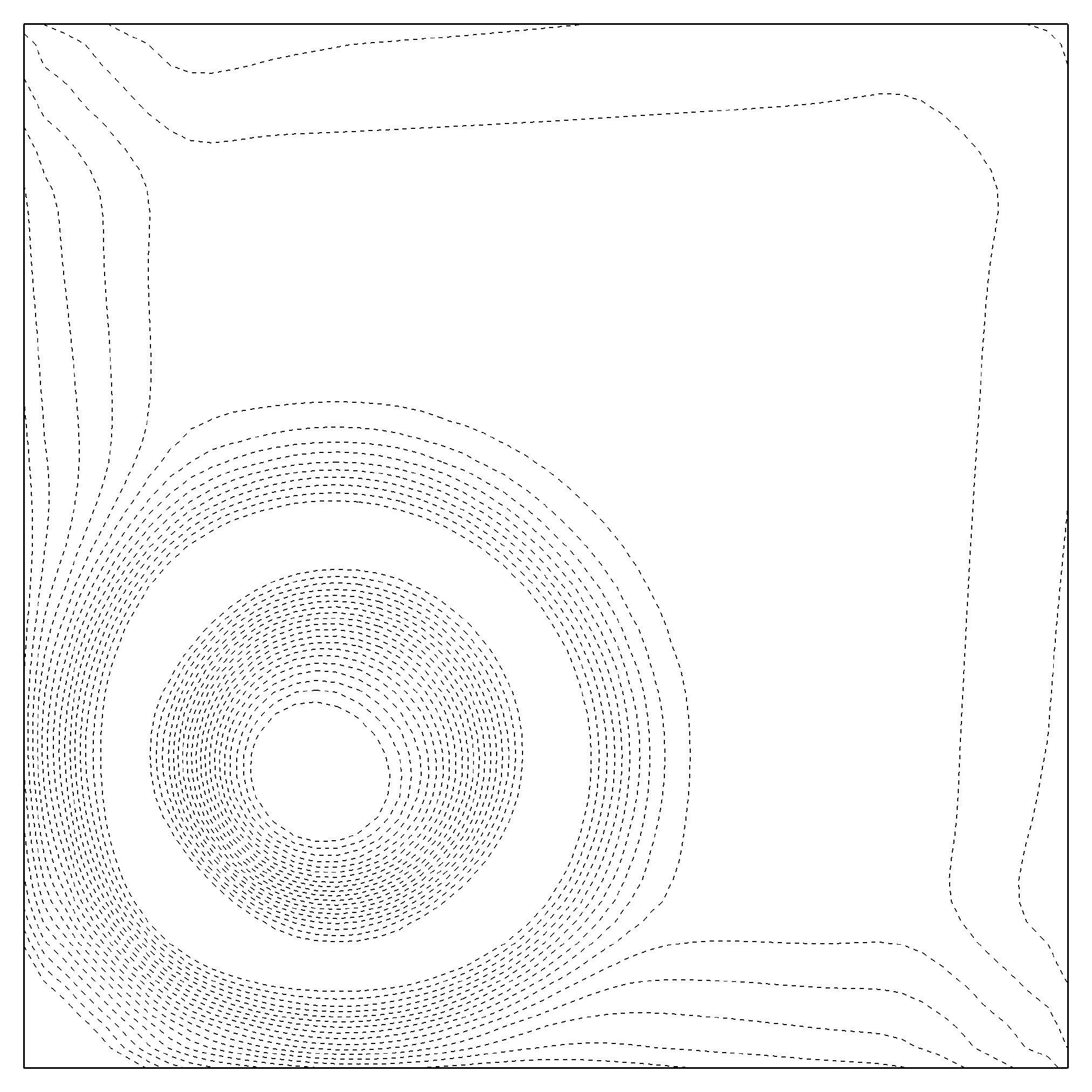} \\
		(a) Well-balanced scheme & (b) CRWENO5 & (c) Non well-balanced scheme
	\end{tabular}
	\caption{Pressure perturbations for test case defined in section (\ref{sec:2diso})  for $\eta$ = 0.1. 20 equally spaced contour levels between -0.03 and +0.03}
	\label{fig:2disothermaleta1}
\end{center}
\end{figure}
\begin{figure}
\begin{center}
	\begin{tabular}{ccc}
		\includegraphics[width=0.3\textwidth]{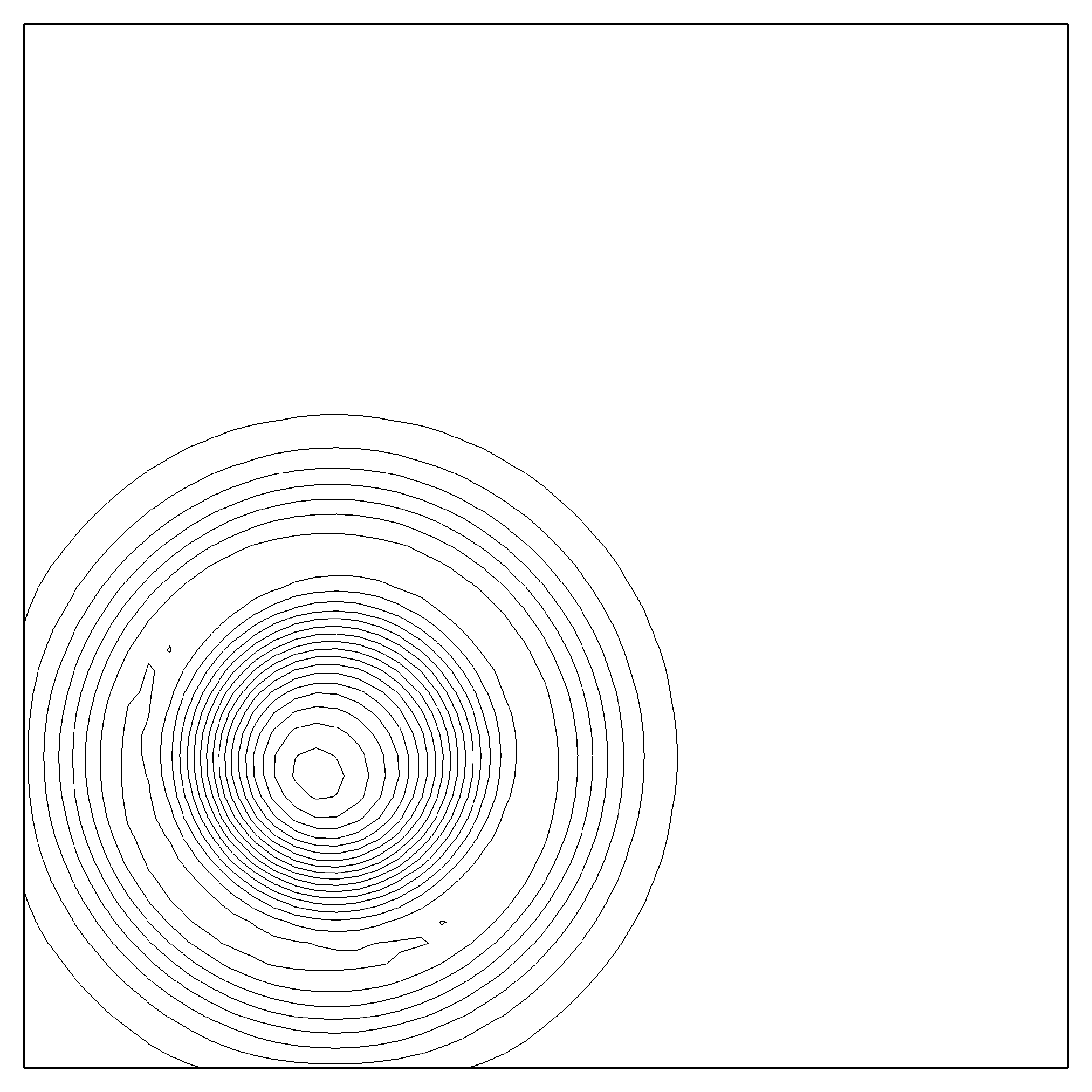} &
		\includegraphics[width=0.3\textwidth]{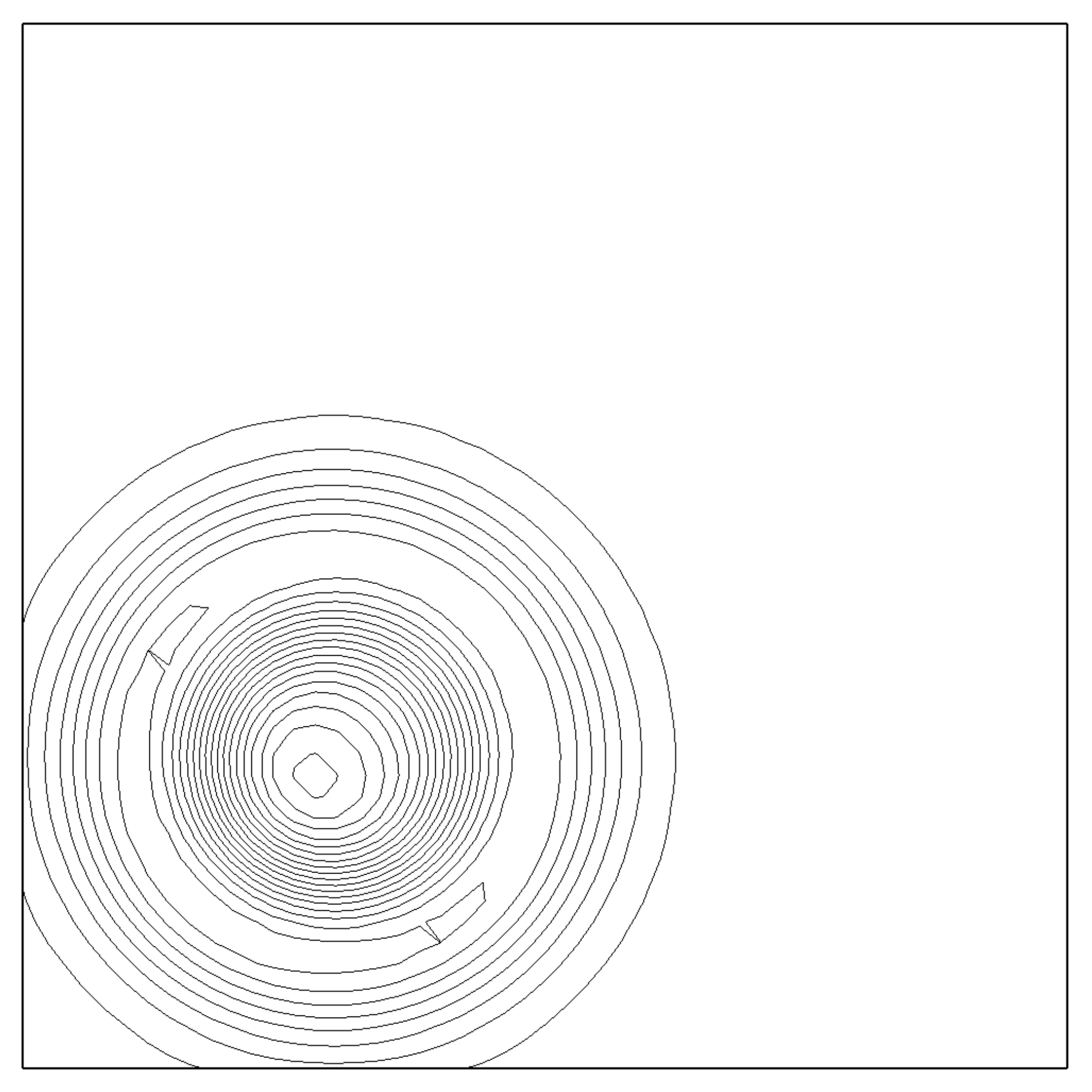} &
		\includegraphics[width=0.3\textwidth]{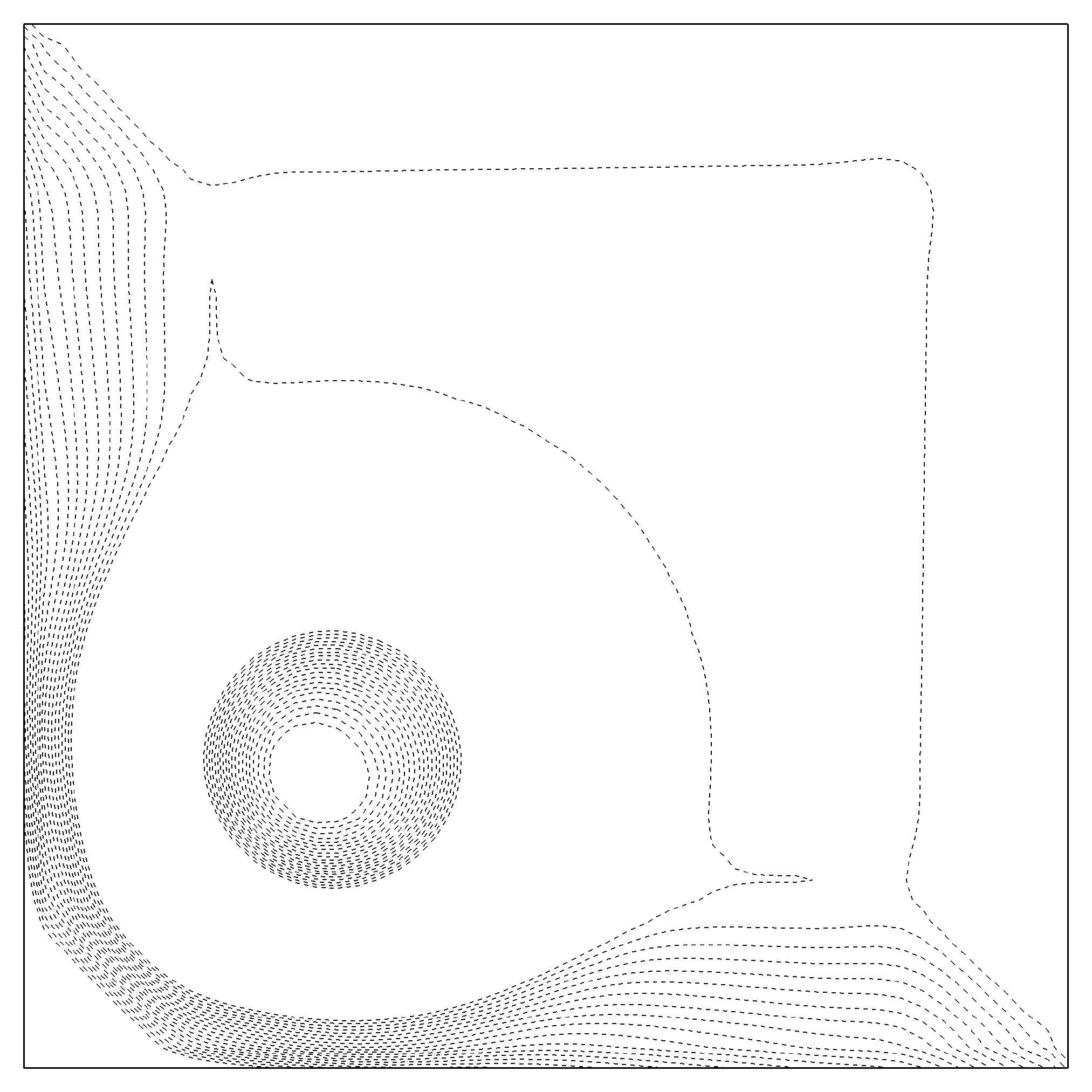} \\
		(a) Well-balanced scheme & (b) CRWENO5 & (c) Non well-balanced scheme
	\end{tabular}
	\caption{Pressure perturbations for test case defined in section (\ref{sec:2diso})  for $\eta$ = 0.001. (a) and (b) 20 equally spaced contour levels between -0.00026 and +0.00026 , (c) 20 equally spaced contour levels between -0.02 and +0.00026 }
	\label{fig:2disothermaleta001}
\end{center}
\end{figure}

\subsection{Perturbation from polytropic hydrostatic solution}
\label{sec:2dpolypert}
In this test case, we consider an initial condition in which a 2-D polytropic condition  is used to obtain a discrete hydrostatic state on a unit square domain. The boundary of the domain is taken to be transmissive and a gravitational potential of $\phi(x,y) = x+y$ is imposed on the domain. The hydrostatic state is defined as follows
\[
\rhoe(x,y) = (1 - \frac{\gamma - 1}{\gamma}\phi(x,y))^{\frac{1}{\gamma -1}} \qquad \pe(x,y) =  \rho^{\gamma}(x,y)
\]
Now, we add an initial perturbation to the hydrostatic pressure distribution as follows
\[
p(x,y,0) = \pe(x,y) + \eta \exp(-100((x-0.3)^2 + (y -0.3)^2))
\]
The simulation is performed on a grid of resolution 51$\times$51 upto a final time of $t$ = 0.15 and the results for two different values of amplitude of pressure perturbations ($\eta$ = 0.1, 0.001) are compared with a non well-balanced scheme in which the source terms are computed using central differences. We also compare the results with that obtained using the higher order CRWENO5 scheme. The scheme does not preserve the exact hydrostatic state as the gravitational force is not aligned with the Cartesian grid. However, from figures (\ref{fig:2dpolytropiceta1}) and (\ref{fig:2dpolytropiceta001}),  we can see that the well-balanced scheme is able to resolve the pressure perturbation reasonably accurately while the solutions obtained from non well-balanced scheme is distorted to a very large extent. We can also observe that the results are comparable with that obtained using the reference CRWENO5 scheme even for the smaller perturbations.
\begin{figure}
	\begin{center}
		\begin{tabular}{ccc}
			\includegraphics[width=0.30\textwidth]{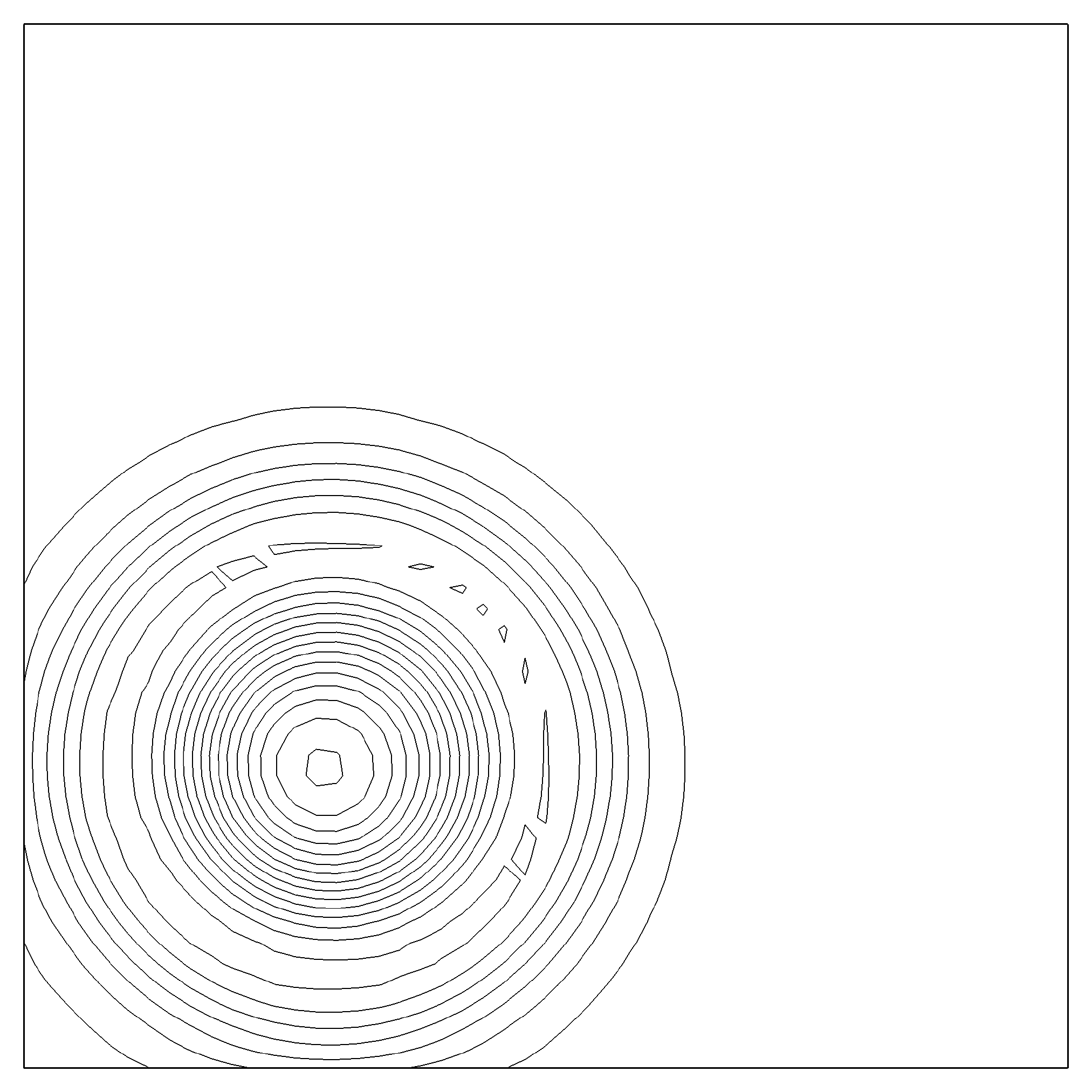} &
			\includegraphics[width=0.30\textwidth]{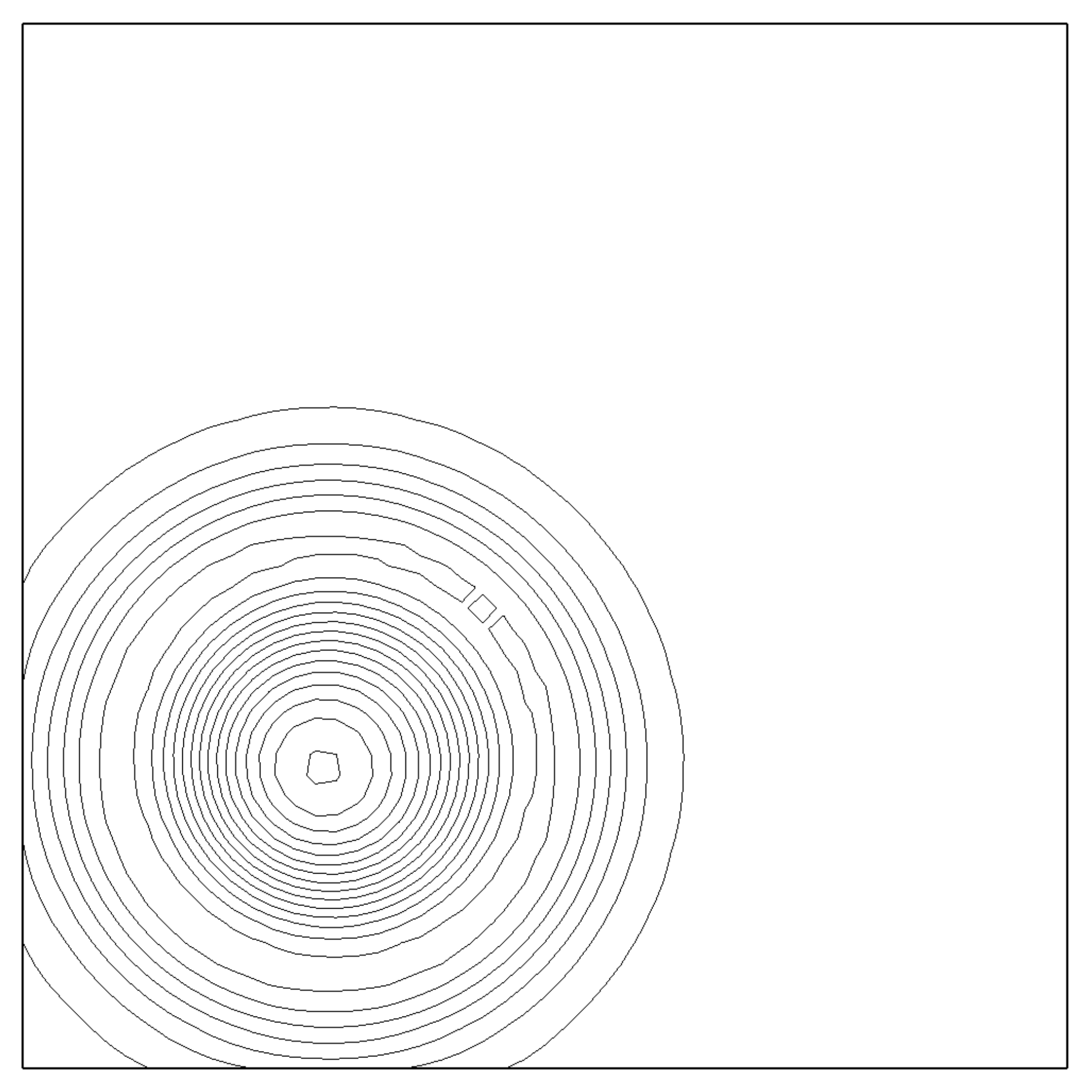} &
			\includegraphics[width=0.30\textwidth]{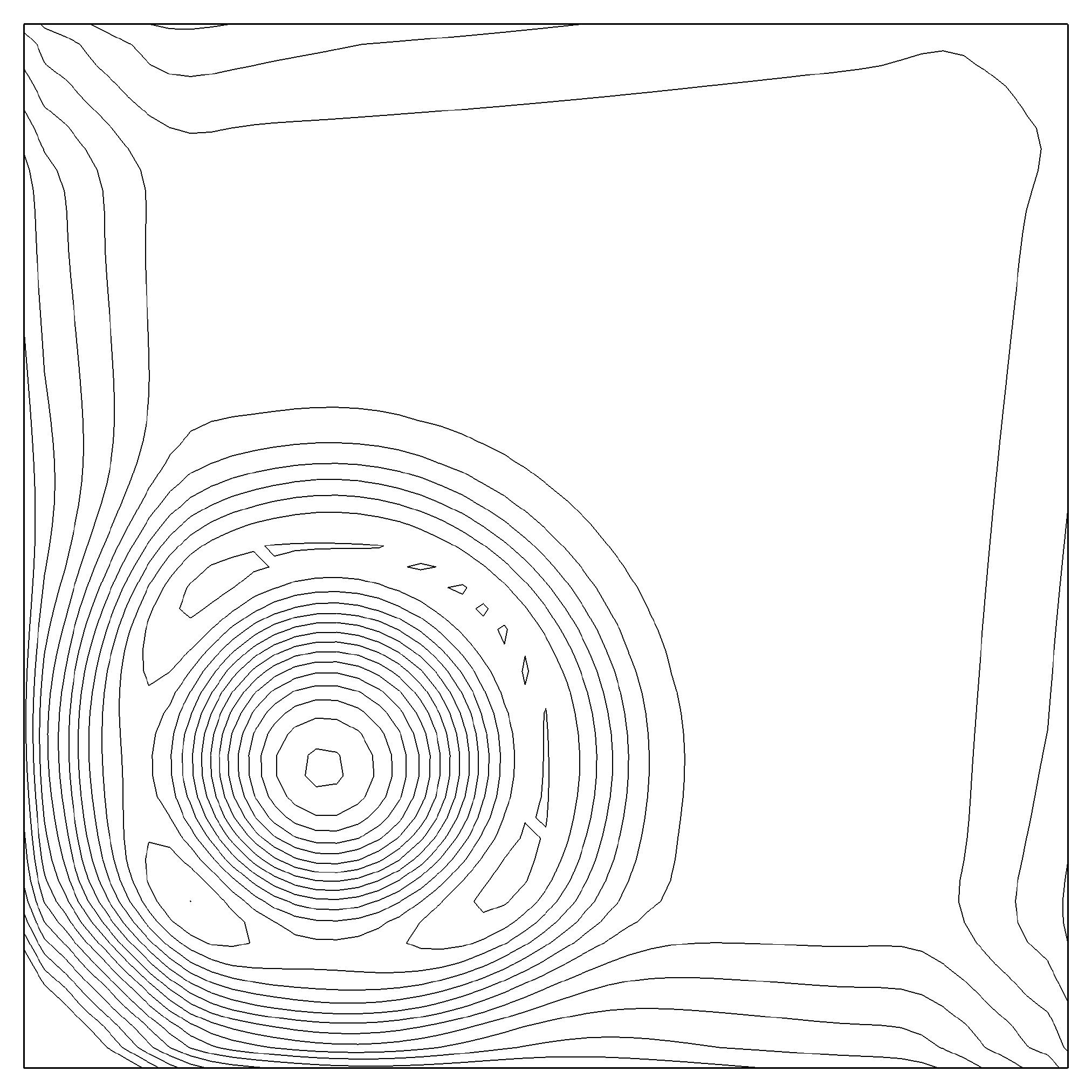} \\ 
			(a) Well-balanced scheme & (b) CRWENO5 & (c) Non well-balanced scheme
		\end{tabular}
		\caption{Pressure perturbations for test case defined in section (\ref{sec:2dpolypert})  for $\eta$ = 0.1. 20 equally spaced contour levels between -0.03 and +0.03}
		\label{fig:2dpolytropiceta1}
	\end{center}
\end{figure}

\begin{figure}
	\begin{center}
		\begin{tabular}{ccc}
			\includegraphics[width=0.30\textwidth]{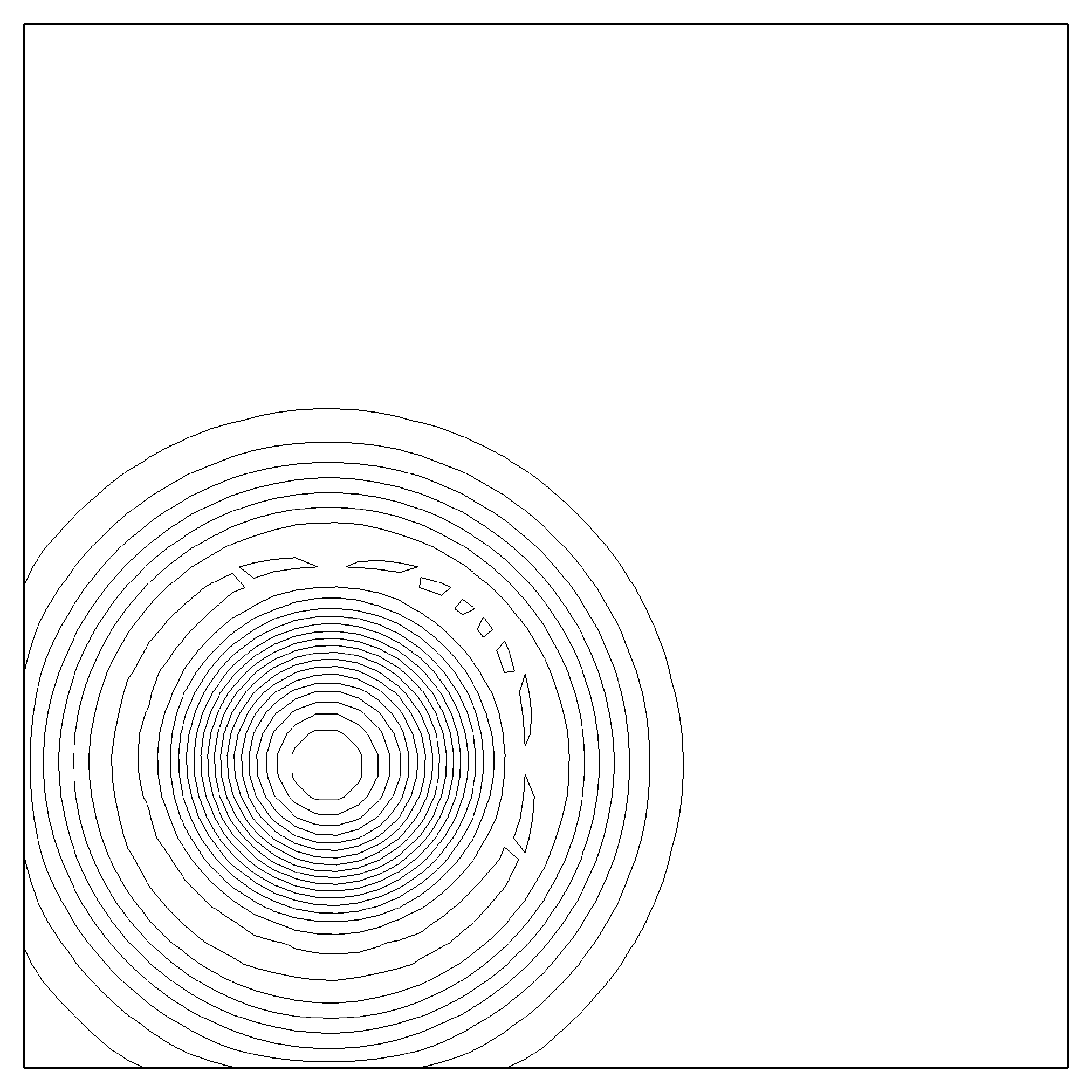} &
			\includegraphics[width=0.30\textwidth]{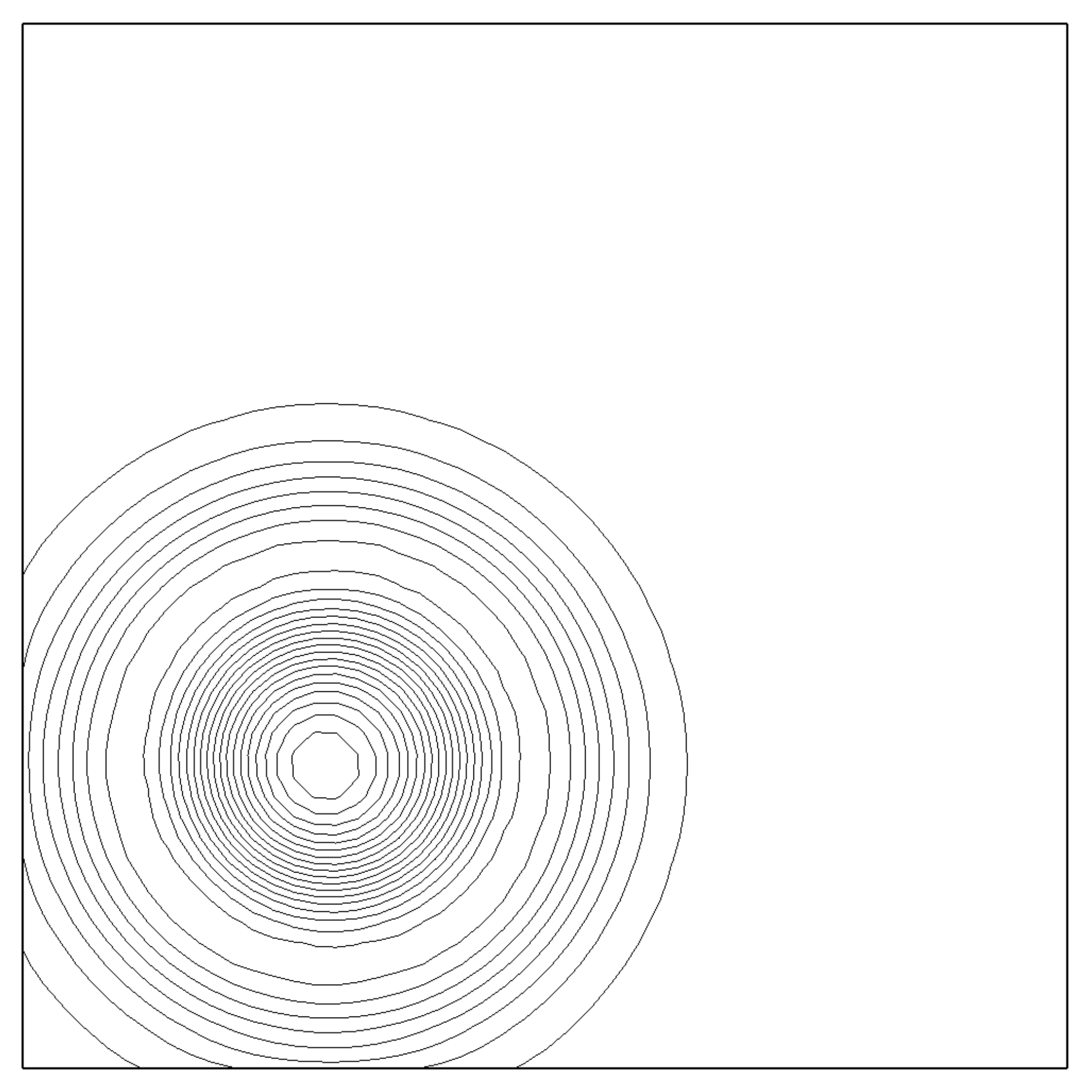} &
			\includegraphics[width=0.30\textwidth]{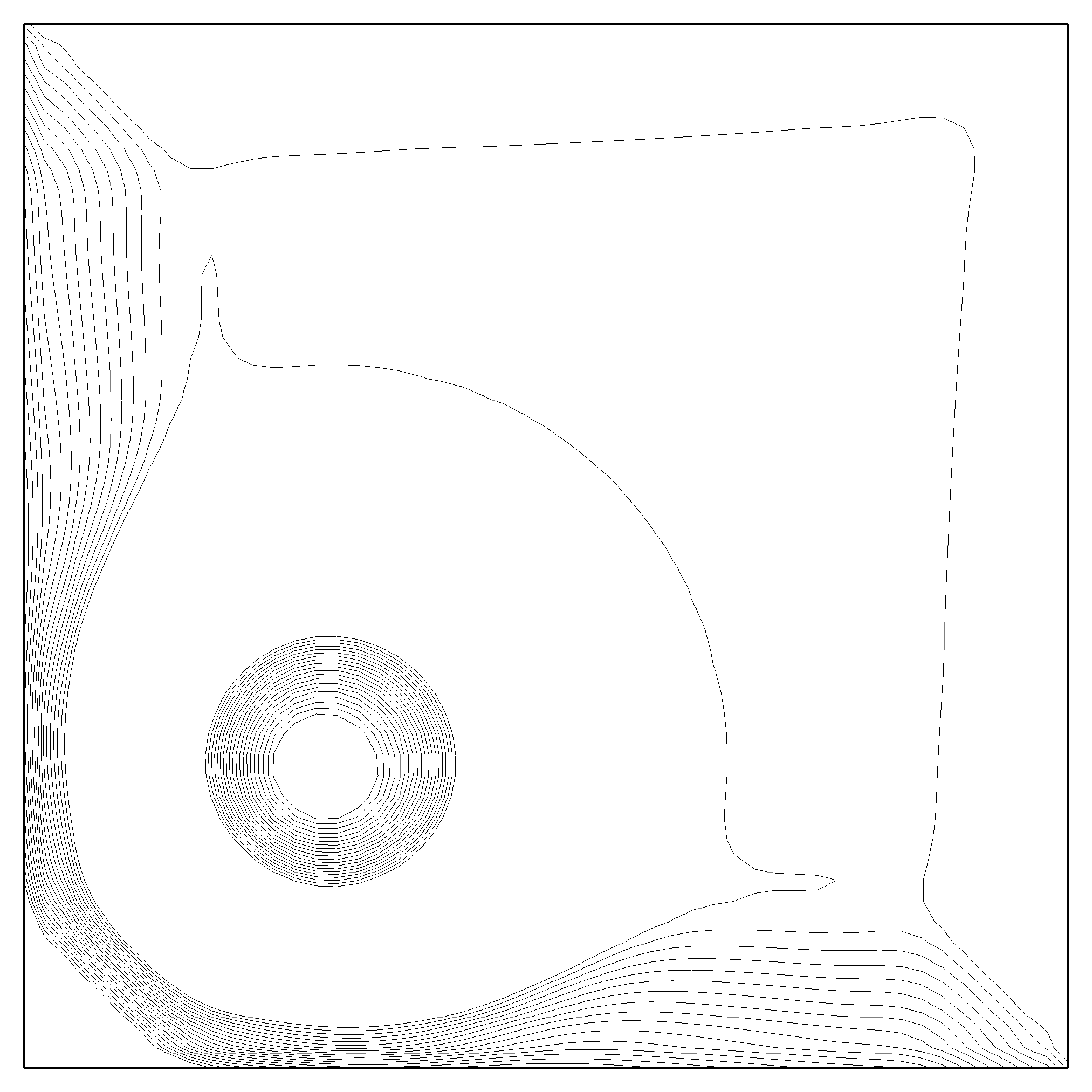} \\ 
			(a) Well-balanced scheme & (b) CRWENO5 & (c) Non well-balanced scheme
		\end{tabular}
		\caption{Pressure perturbations for test case defined in section (\ref{sec:2dpolypert})  for $\eta$ = 0.001. (a) and (b) 20 equally spaced contour levels between -0.00026 and +0.00026 , (b) 20 equally spaced contour levels between -0.02 and +0.00026 }
		\label{fig:2dpolytropiceta001}
	\end{center}
\end{figure}

\subsection{Order of accuracy}
\label{sec:2dorder}
This is a test case taken from~\cite{Li2016} and is used to determine the order of accuracy of the well-balanced scheme when applied to general solutions. An exact solution of the Euler equations with gravity is given by
\[
\rho(x, y, t) = 1 + 0.2\sin(\pi(\phi(x,y) - t(u_0 + v_0))), \qquad u(x,y,t) = u_0, \qquad v(x,y,t) = v_0
\]
\[
p(x,y,t) = p_0 + t(u_0 + v_0) - \phi(x,y) + 0.2\cos(\pi(\phi(x,y) - t(u_0 + v_0)))/\pi
\]
in the domain $[0,2] \times [0,2]$ with the gravitational potential taken as $\phi(x,y) = x + y$. The parameters for the flow are chosen to be $u_0 = v_0 = 1$ and $p_0 = 4.5$. Exact solution is applied at the boundary and ghost points for each time step and the $L_2$ norms of the error for the primitive variables are computed at final time of $t = 0.1$ for various grid resolutions. The error norms and convergence rates for each primitive variable is tabulated in table~(\ref{t:2dconvergence}). We observe that second order accuracy has been achieved for all the variables for the two-dimensional well-balanced scheme.
\begin{table}
\begin{center}
\begin{tabular}{|c|c|c|c|c|c|c|c|c|}
\hline
Cells & \multicolumn{2}{|c|}{$\rho$}& \multicolumn{2}{|c|}{$u$} & \multicolumn{2}{|c|}{$v$}  & \multicolumn{2}{|c|}{$p$}\\ \hline
 & Error & Rate & Error & Rate & Error & Rate & Error & Rate \\ \hline
100$\times$100 & 2.802E-6 & - & 4.464E-6 & - & 4.464E-6 & - & 9.183E-6 & - \\ \hline
200$\times$200 & 5.982E-7 & 2.23 &  1.114E-6 & 2.00 & 1.114E-6 & 2.00 &  2.308E-6 &  1.99 \\ \hline
400$\times$400 &  1.377E-7 & 2.12 &  2.779E-7 & 2.00 & 2.779E-7 & 2.00 & 5.781E-7 & 1.99 \\ \hline
800$\times$800 &  3.338E-8 & 2.05 & 6.939E-8 & 2.00 & 6.939E-8 & 2.00 & 1.447E-7 & 1.99 \\ \hline
\end{tabular}
\caption{Convergence of error for 2D case given in section (\ref{sec:2dorder})}
\label{t:2dconvergence}
\end{center}
\end{table}

\subsection{Radial gravitational field}
This test case consists of a radial gravitational field and an isothermal hydrostatic solution given by $\rhoe = \exp(-\phi(r))$ and $\pe = \exp(-\phi(r))$, 
where $\phi(r) = r$. We conduct a test to verify the well-balanced property of the scheme on a Cartesian mesh. The test domain is $[-1,+1] \times [-1,+1]$ with transmissive boundary conditions on all sides and the simulation is performed upto a final time of $t = 1.0$ units. The errors in $L_1$ norm relative to the initial condition is computed at final time and is presented in table~(\ref{t:radialwb}). We can clearly see that the $L_1$ norm of the error is approximately equal to machine precision for all the grid resolutions tested showing that the scheme is well-balanced.
\begin{table}
 \begin{center}
 \begin{tabular}{|c|c|c|c|c|}
 \hline
Cells & $\rho$  & $u$ & $v$ & $p$ \\\hline
50$\times$50 & 8.203E-16 &   6.179E-16  & 6.179E-16 &  1.164E-15\\ \hline
100$\times$100  & 1.721E-15  & 1.119E-15  & 1.119E-15  & 2.505E-15\\ \hline
 200$\times$200  & 3.610E-15  & 1.695E-15   &1.695E-15 &  5.289E-15  \\ \hline
 \end{tabular}
\caption{$L_1$ error norms for primitive variables for a radial isothermal hydrostatic solution}
 \label{t:radialwb}
 \end{center}
\end{table}
Figure~(\ref{fig:radialwbtest}) shows a comparison between the density contours at time $t$ = 1.5, obtained using the well-balanced scheme and a non well-balanced scheme in which the source terms are calculated using the exact value of the gravitational force $\nabla \phi$. We see that the well-balanced scheme preserves the initial condition, while large errors can be observed for the non well-balanced scheme. 
\begin{figure}
\begin{center}
\begin{tabular}{ccc}
\includegraphics[width=0.32\textwidth]{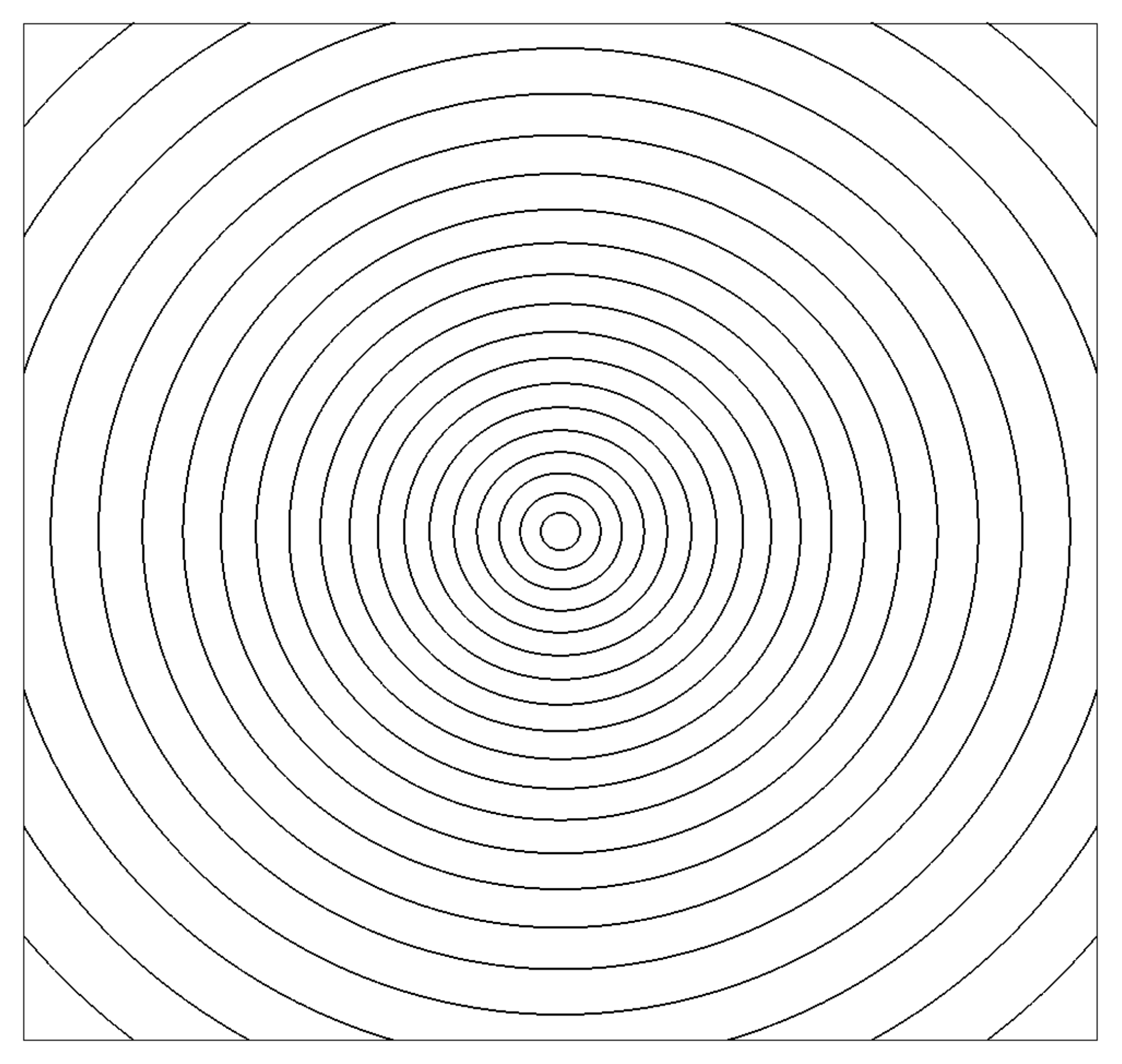} &
\includegraphics[width=0.32\textwidth]{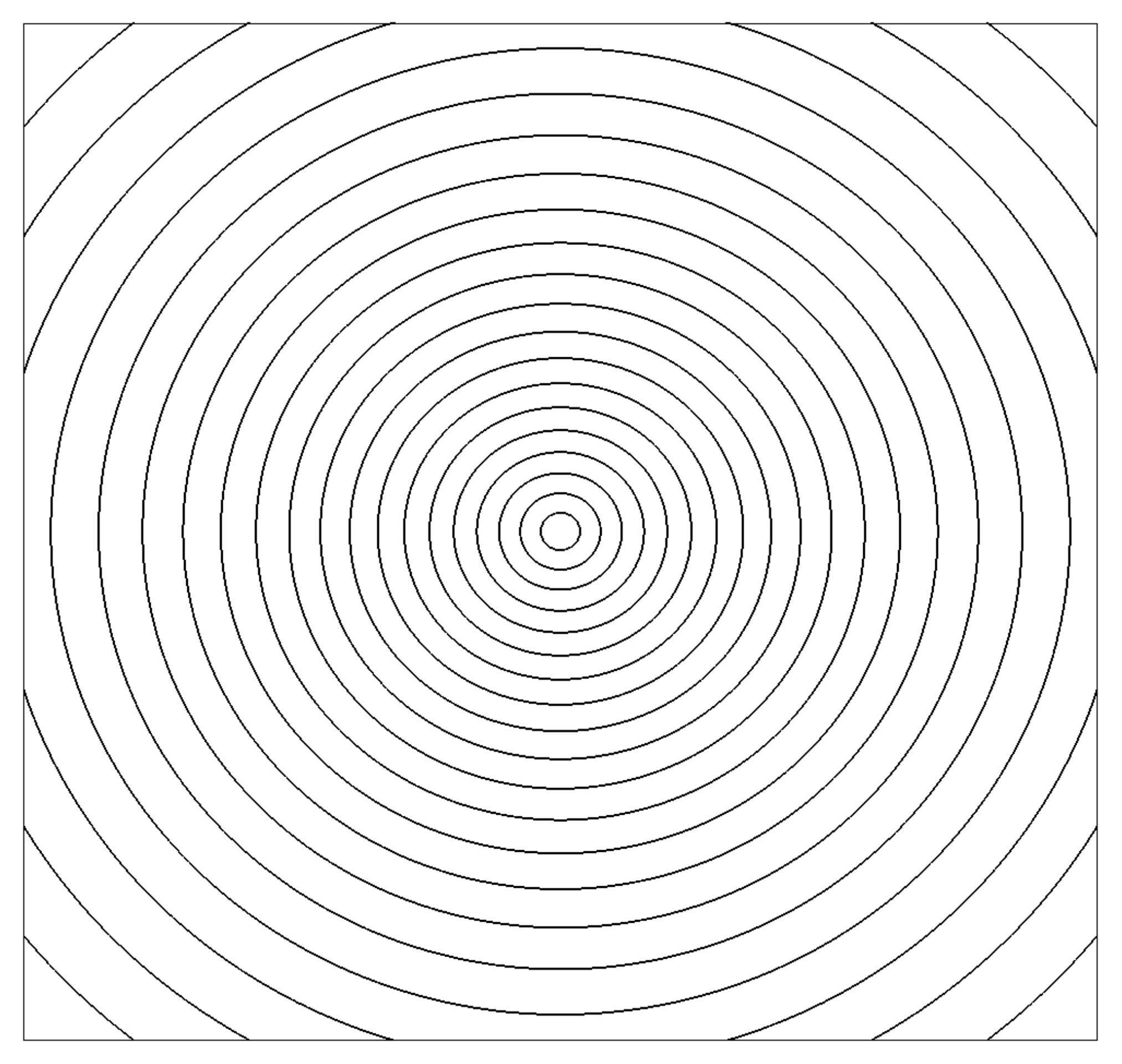} &
\includegraphics[width=0.32\textwidth]{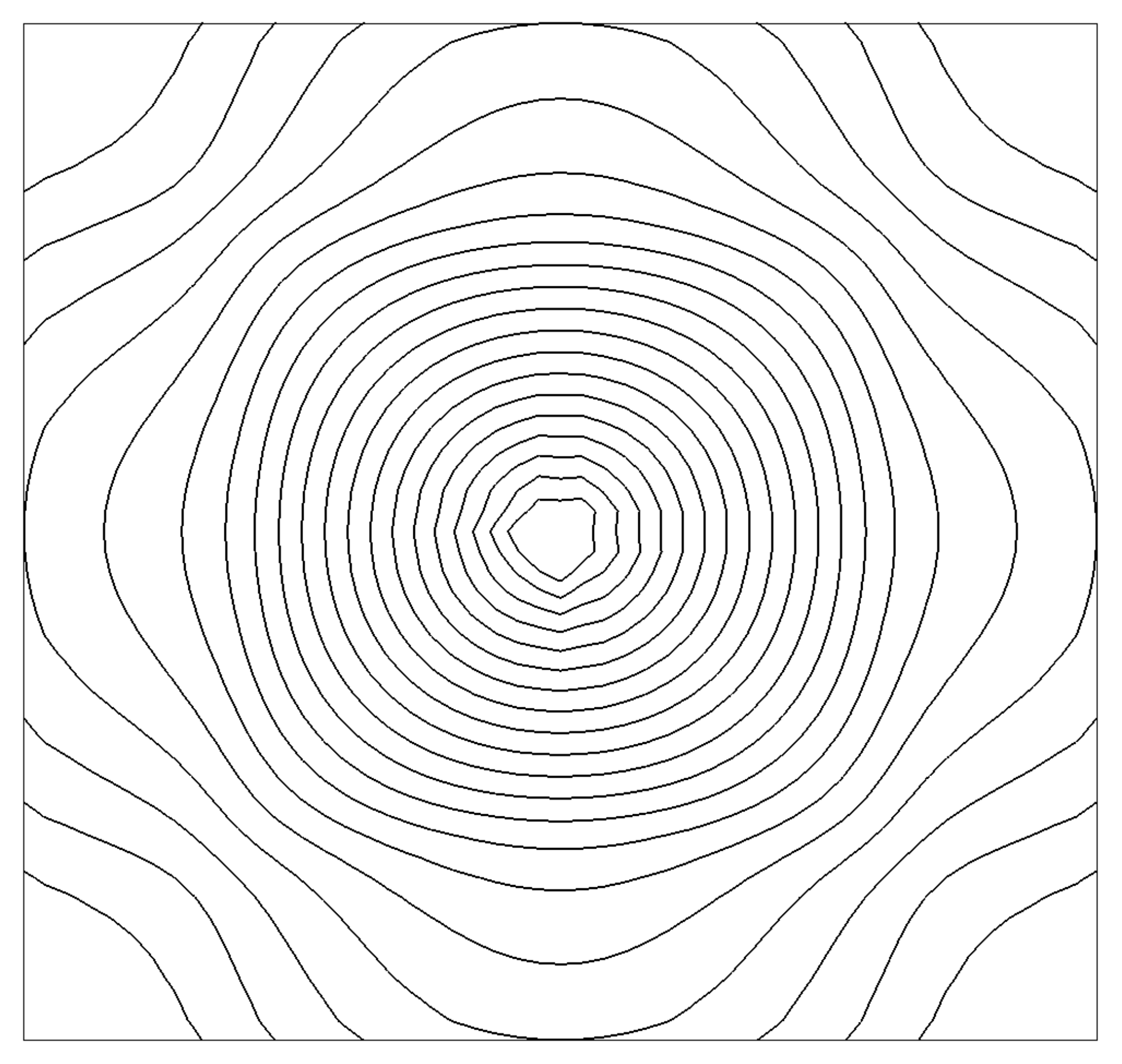} \\
(a) & (b)  & (c) 
\end{tabular}
\caption{Well-balanced test for radial hydrostatic problem on $51 \times 51$ grid; Density contours: (a) initial solution (b) well-balanced scheme: at $t = 1.5$, (c) non well-balanced scheme : at $t = 1.5$  }
\label{fig:radialwbtest}
\end{center}
\end{figure}

We next perform well-balanced tests for a polytropic hydrostatic solution given by
$
\rhoe = \left(1 - \frac{\nu - 1}{\nu}\phi\right)^{\frac{1}{\nu - 1}}$ and $\pe = \rhoe^{\nu}
$ where $\nu = 1.2$. The simulations are         performed upto a final time of $t=50$ units for grids of sizes $51\times51$ and $101\times101$. Transmissive conditions are imposed at the boundaries. Evolution of errors in    $L_1$ norm for the primitive variables are plotted in figure~(\ref{fig:radialpolyerrevolution}). Since the scheme is not exactly well-balanced for the  test case due to radial gravitational field and Cartesian mesh, errors of the order $\sim10^{-6}$ is observed for all the variables  at time $t=50$ units. However, using a non well-balanced scheme causes the simulation to become unstable at time $t=7$ units, suggesting that the present scheme is still useful in such simulations. \\
\begin{figure}
\begin{tabular}{cc}
\includegraphics[width=0.5\textwidth]{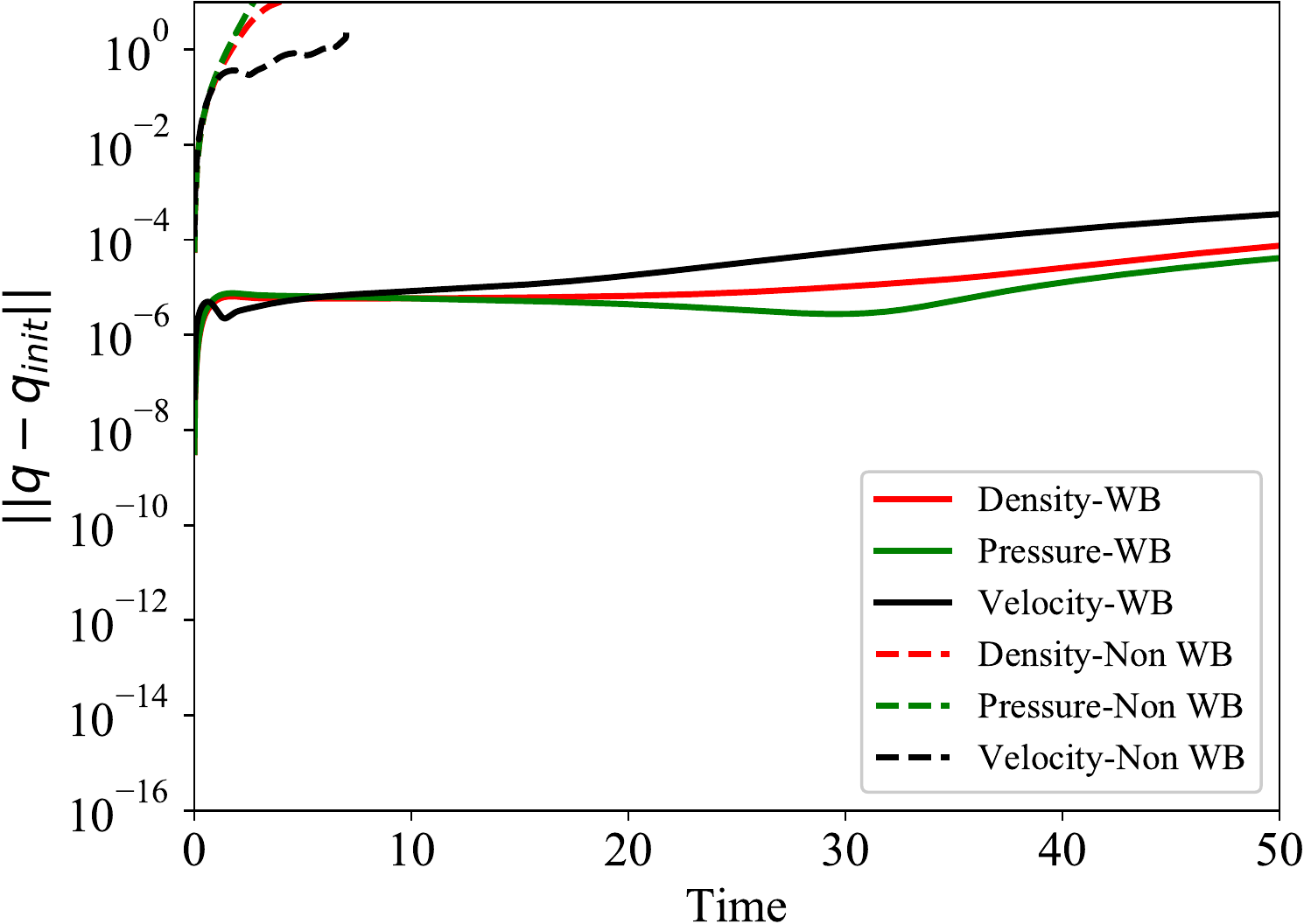} &
\includegraphics[width=0.5\textwidth]{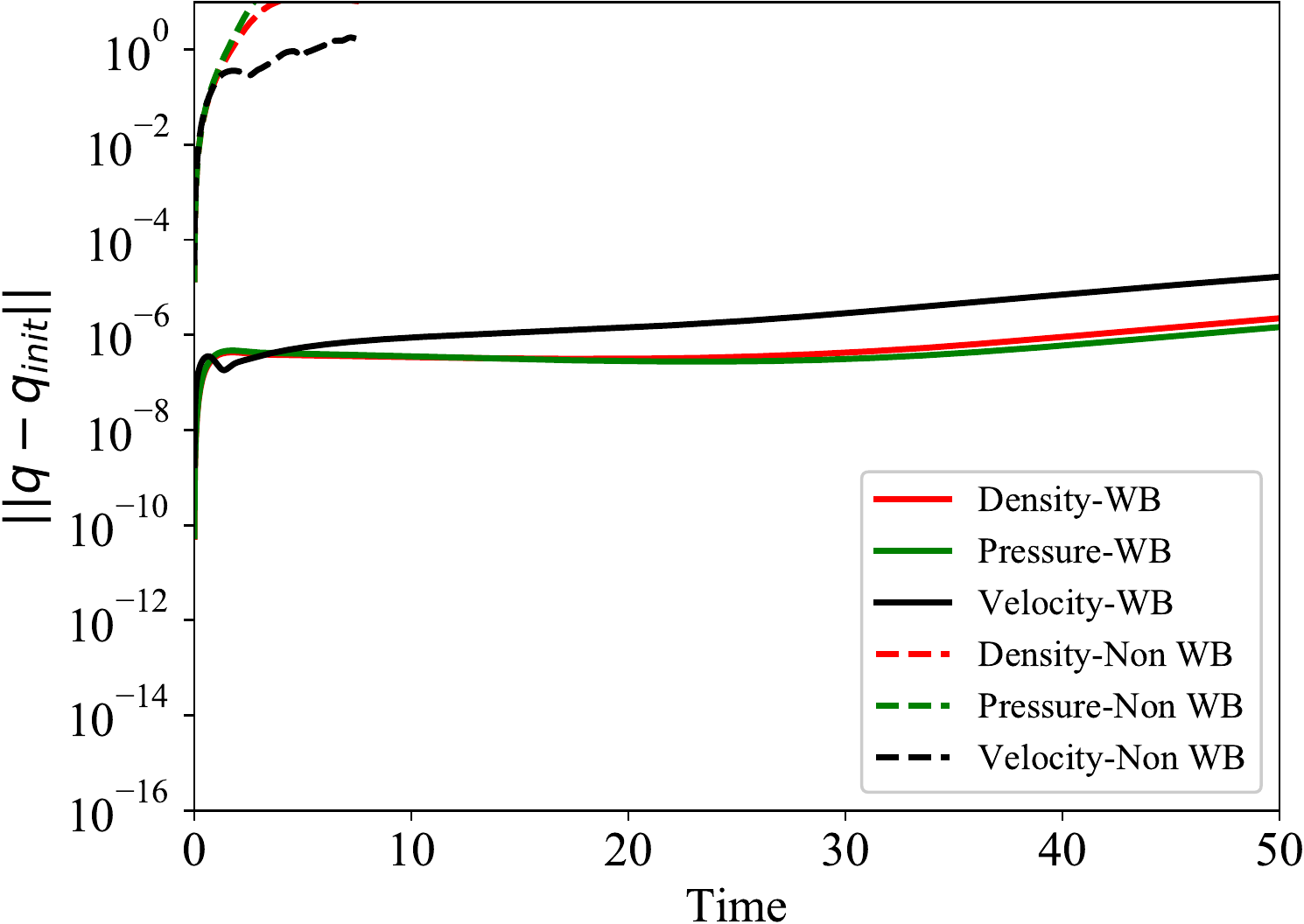} \\
(a)  & (b)  
\end{tabular}
\caption{Evolution of $L_1$ error norms for primitive variables for polytropic  initial conditions : (a) $51\times51$ (b) $101\times101$  }
\label{fig:radialpolyerrevolution}
\end{figure}
Now, we perform well-balanced tests for a van der Waals equation where the initial hydrostatic solution is obtained using the method described in section~\ref{sec:vdWhydro}. The equation~(\ref{eqn:hydrovanode}) is written for the radial case as
	\[
	\frac{d\rho}{dr} +   \frac{1}{f(\rho)}\frac{d\phi}{dr} = 0    \qquad \text{where} \qquad f(\rho) = \frac{MR_uT}{\rho(M - \rho b)^2} - \frac{2a}{M^2}
	\]
As with the 1-D case, we use $a = 0.4$ and $b = 0.001$ with the left boundary condition for $\rho$ taken as 1.0. The universal gas constant and molar mass are taken to be 1.0 and the above initial value problem is solved over a domain [0,$\sqrt{2}$]. This solution is interpolated onto a Cartesian mesh in a square domain [-1,1] to obtain our 2D problem. A grid size of 201$\times$201 is used with transmissive conditions imposed on all boundaries of the square domain. We perform numerical simulations for a large final time and compare the evolution of $L_1$ errors of the primitive variables with that obtained using a non well-balanced scheme. It should be noted that the current scheme would not be able to exactly preserve the well-balanced property even for the second-order discrete approximation of the exact solution due to misalignment of the grid with respect to the gravitational field. Therefore, $L_1$ errors of the order of $10^{-7}$ are observed for all the primitive variables using the well-balanced scheme even at a large time of $T$ = 250. However, from figure~(\ref{fig:radialvandererrevolution}a), we can observe that the non well-balanced scheme becomes unstable almost immediately after the start  of the simulation (i.e at $t$ = 0.03 units). Therefore, as with the 1-D case described in section~(\ref{sec:vdWhydro}), we observe that for van der Waals equation of state, the numerical solution is heavily reliant on the well-balanced property of the scheme.
	\begin{figure}
		\begin{tabular}{cc}
			\includegraphics[width=0.48\textwidth]{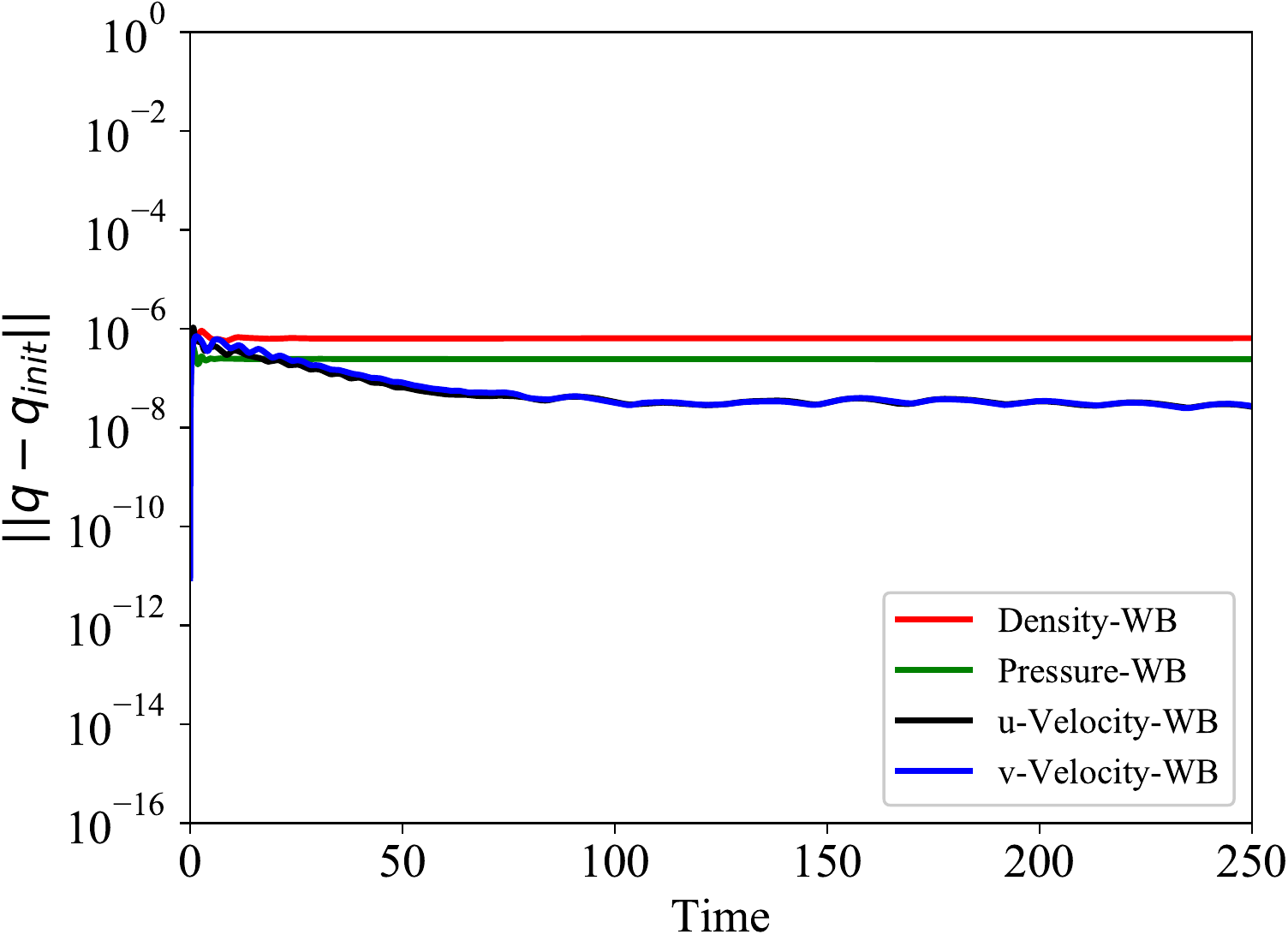} &
			\includegraphics[width=0.48\textwidth]{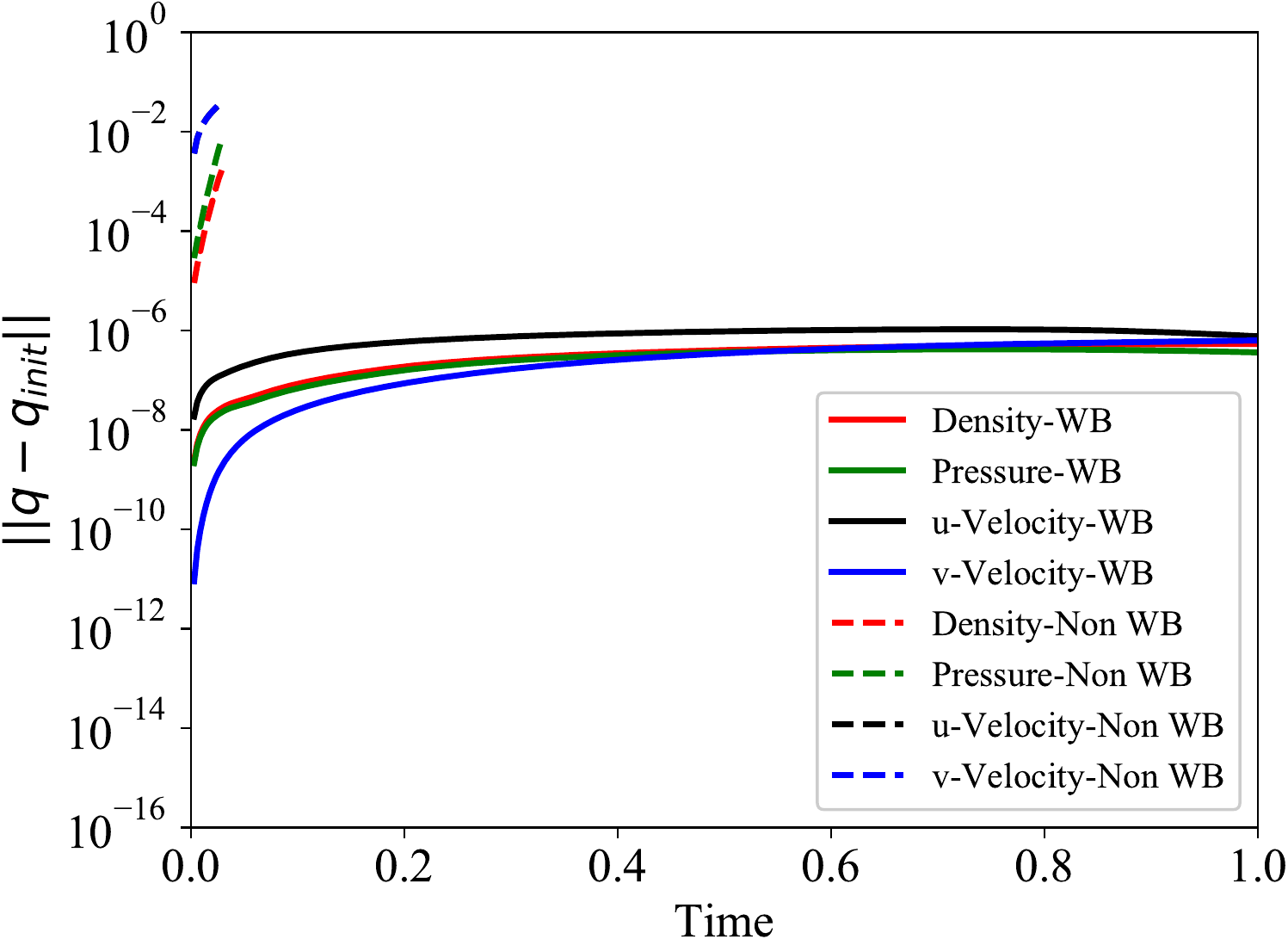} \\
			(a)  & (b)  
		\end{tabular}
		\caption{Evolution of $L_1$ error norms for primitive variables for 2D van der Waals case : (a)  WB, final time of $t$ = 250 units (b)WB v/s Non-WB }
		\label{fig:radialvandererrevolution}
	\end{figure}

\subsection{Radial Rayleigh-Taylor instability}
The initial condition consists of an isothermal hydrostatic solution which is perturbed along a wavy interface which then develops instabilities due to the radial gravitational field~\cite{LeVeque1999}. The gravitational potential is $\phi(r) = r$ and the initial condition is given by
\[
p = \begin{cases}
e^{-r}, &  r \leq r_0 \\
e^{-\frac{r}{\alpha} + r_0\frac{1 - \alpha}{\alpha}}, & r > r_0 
\end{cases}
\qquad
\rho = \begin{cases}
e^{-r}, &  r \leq r_i \\
\frac{1}{\alpha} e^{-\frac{r}{\alpha} + r_0\frac{1 - \alpha}{\alpha}}, & r > r_i 
\end{cases}
\] 
Here, $r_i = r_0 (1 + \eta \cos(k\theta))$ and $\alpha = \exp(-r_0)/(\exp(-r_0) + \Delta _{\rho})$ and the constants are $r_0 = 0.6$, $\eta = 0.02$, $\Delta _{\rho} = 0.1$ and $k = 20$. Simulations are performed up to a final time of $t = 5$ on the domain $[-1,+1] \times [-1,+1]$ with a mesh of $241 \times 241$ points.
\begin{figure}
\begin{center}
\begin{tabular}{cc}
\includegraphics[width=0.35\textwidth]{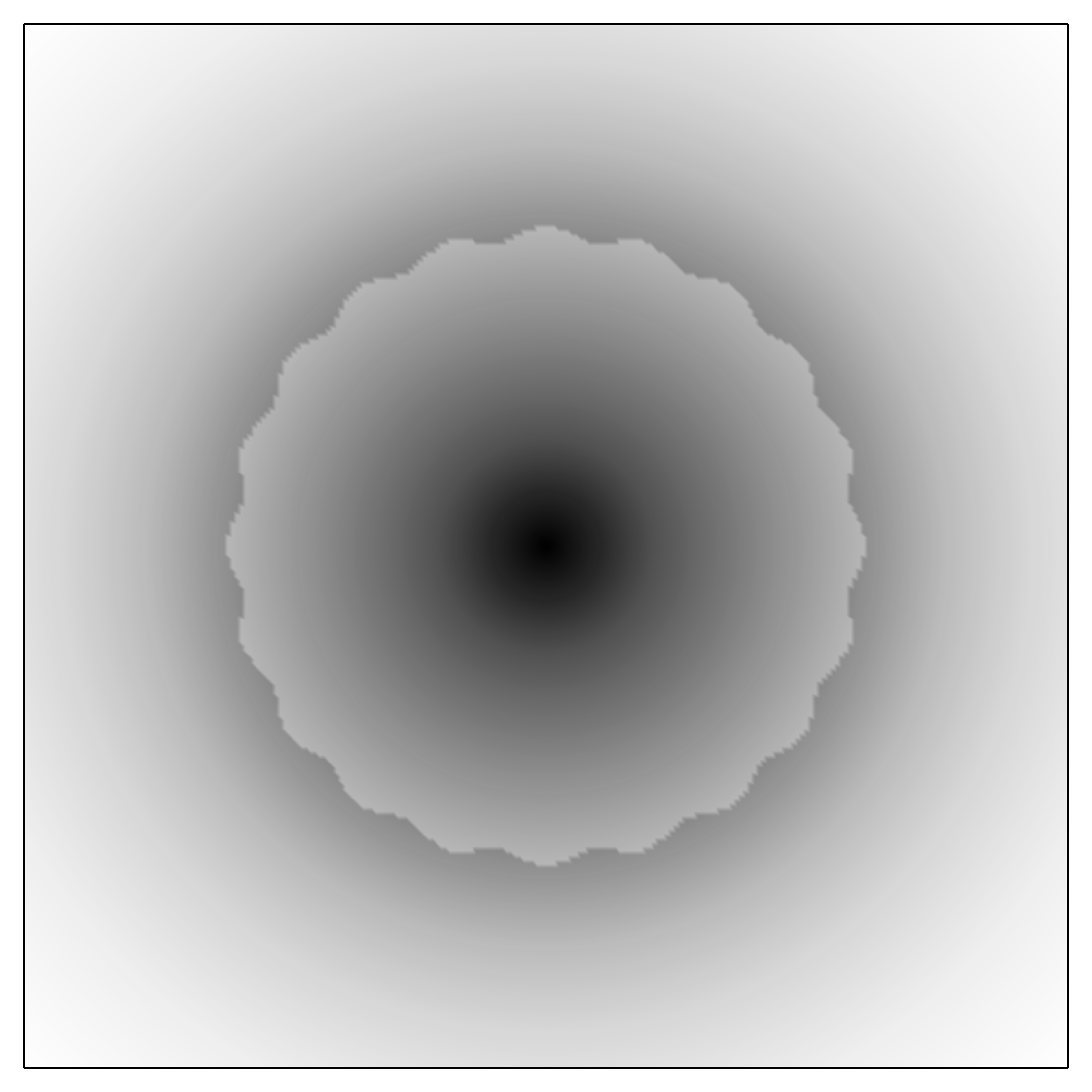} &
\includegraphics[width=0.35\textwidth]{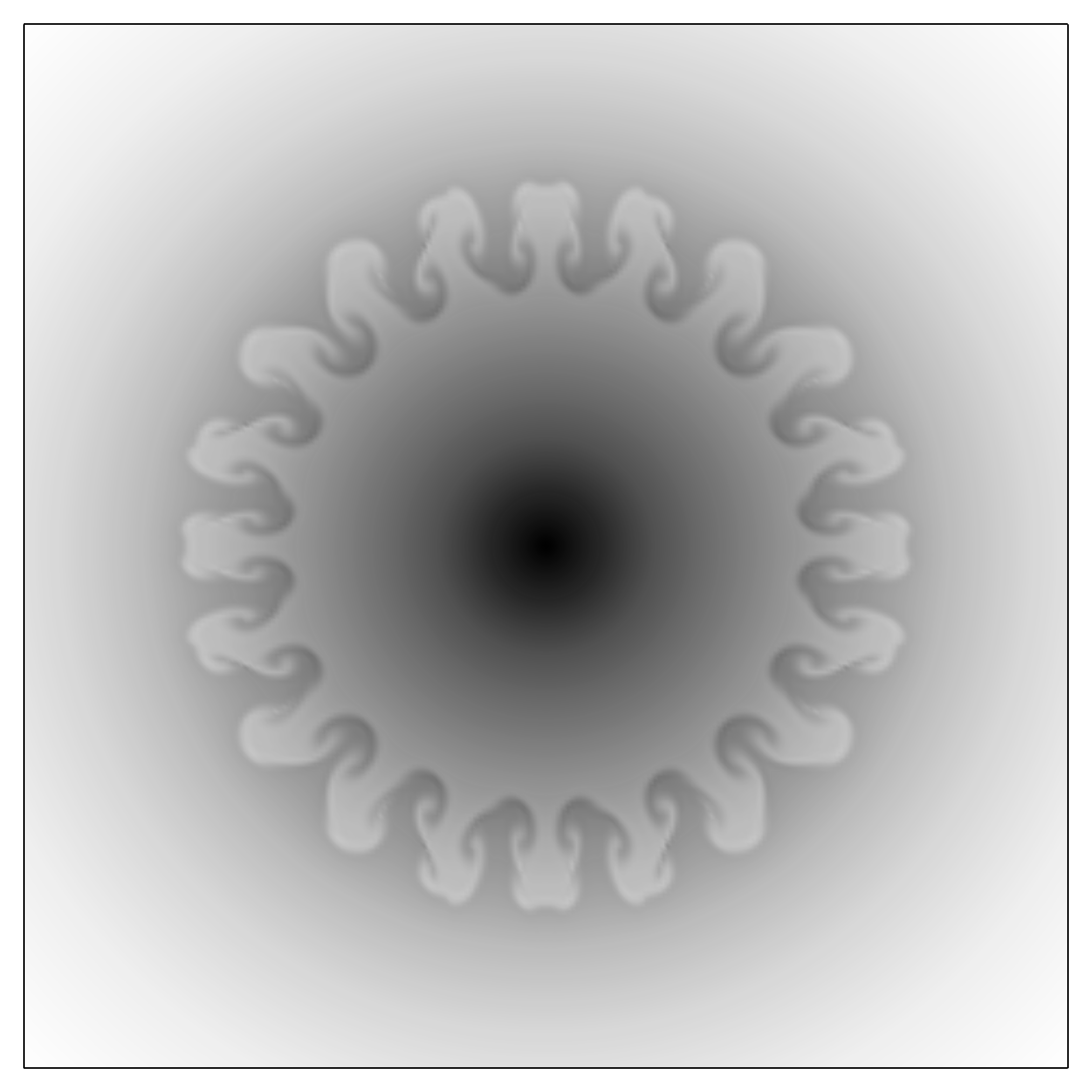} \\
(a)  & (b)  \\
\includegraphics[width=0.35\textwidth]{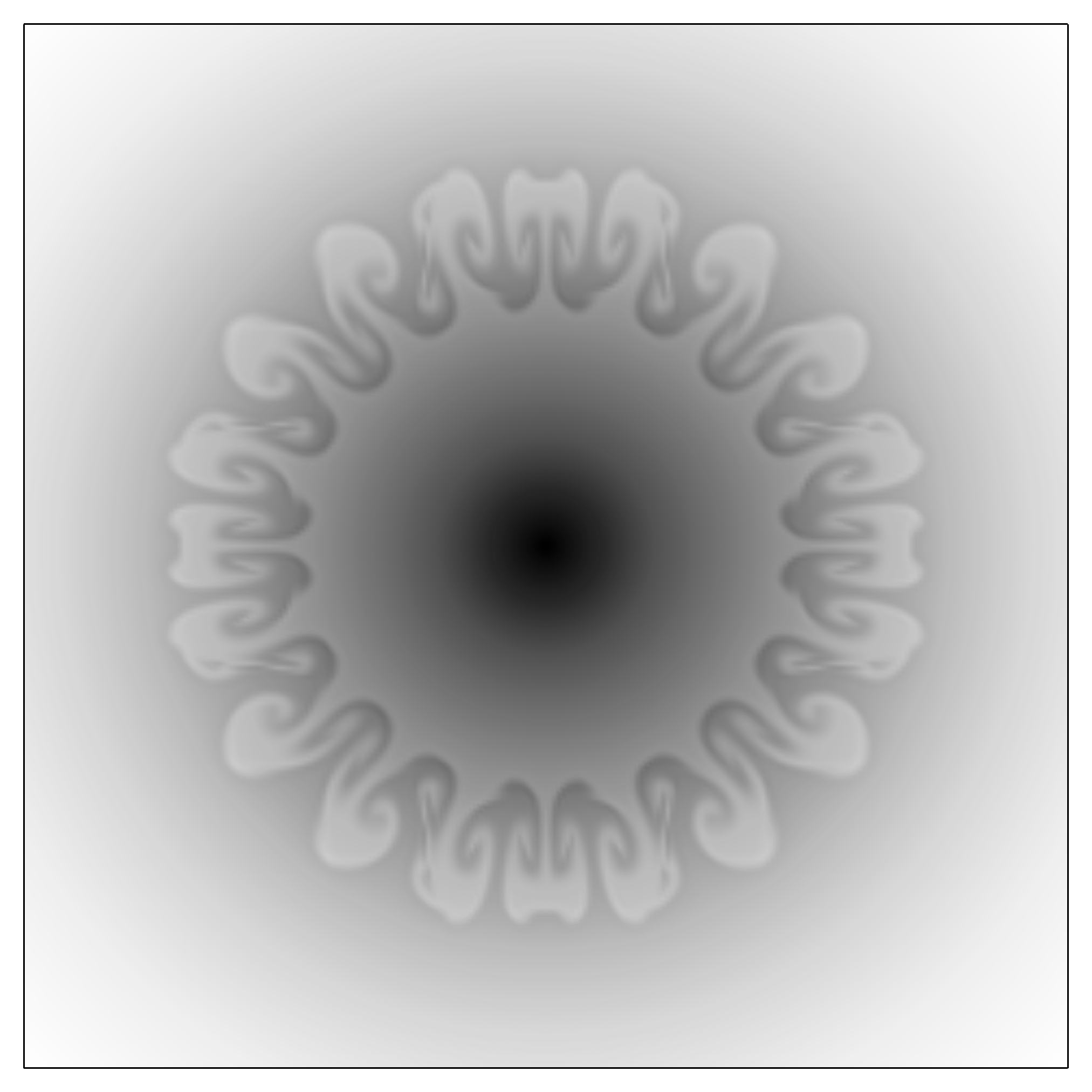} &
\includegraphics[width=0.35\textwidth]{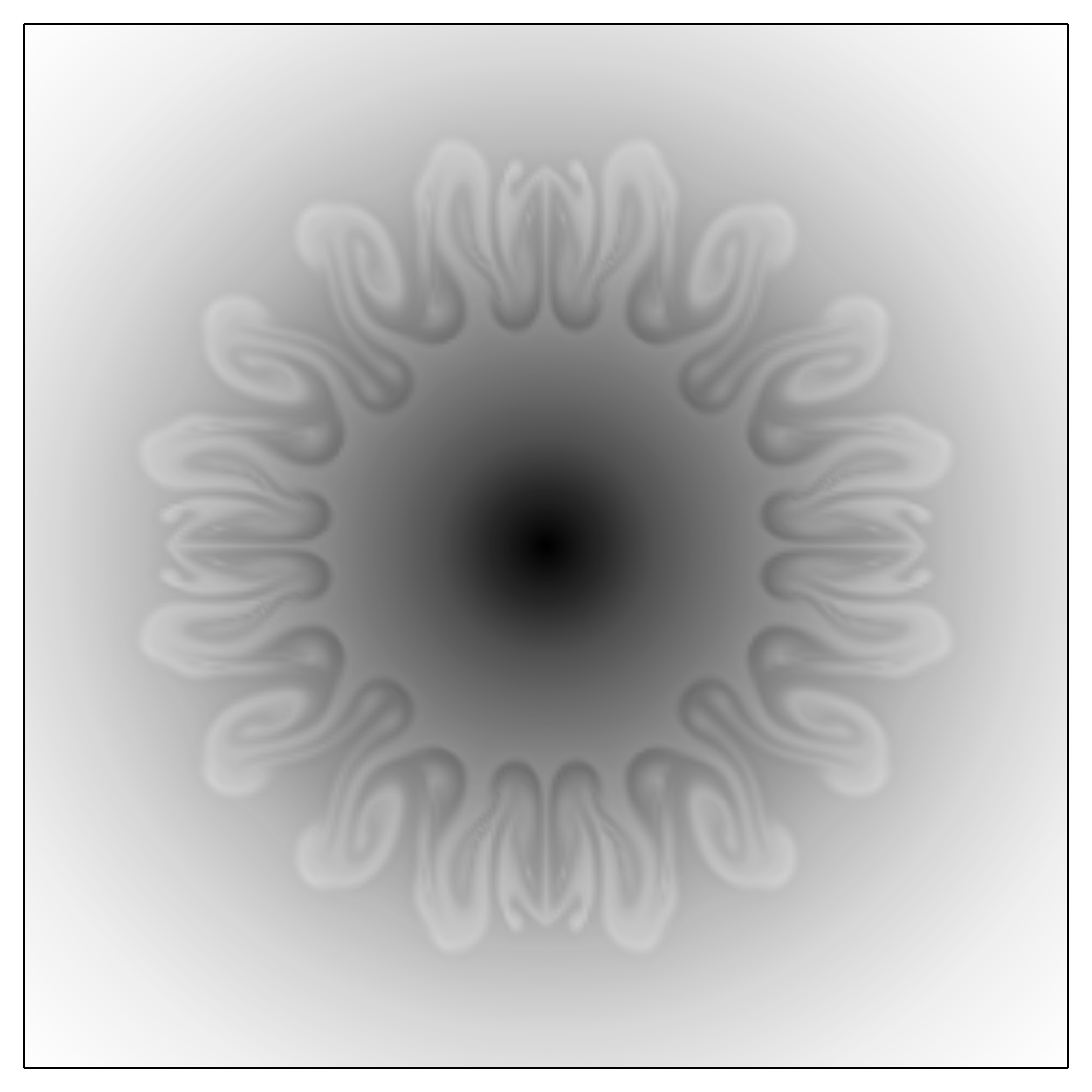} \\
(c)  & (d) \\
\end{tabular}
\caption{Radial Rayleigh-Taylor instability. Density plots at times (a) $t$ = $0$, (b) $t$ = $2.9$,  (c) $t$ = $3.8$, (d) $t$ = $5.0$. }
\label{fig:rayleightaylor}
\end{center}
\end{figure}
Figure~(\ref{fig:rayleightaylor}) shows the solutions for density at various times. A non well-balanced scheme where the source terms are computed using the exact values of the gravitational force is also used to perform the simulation. The solution (not shown here) disintegrates at around time $t$ = 2.5 due to the generation of large errors away from the discontinuous density interface at $r$ = $r_0$. In contrast, the well-balanced scheme produces relatively accurate results with the instabilities concentrated near the location of the interface while  retaining equilibrium solution away from it.

\subsection{Rising thermal bubble}

The test case~\cite{Giraldo2008} simulates the rising of a hot bubble due to buoyancy which is immersed in a hydrostatic solution given by
\[
\pe = p_0 \left(1 - \frac{(\gamma - 1)gy}{\gamma R \theta_0} \right)^{\frac{\gamma}{\gamma - 1}}, \qquad \rhoe = \rho _0 \left(1 - \frac{(\gamma - 1)gy}{\gamma R \theta_0}\right)^{\frac{1}{\gamma - 1}}
\]
where $p_0 = 10^5 \ N/m^2$, $\gamma=1.4$, $g = 9.8 \ m/s^2$ $R = 287.058 \ J/(kg \  K)$, $\theta_0 = 300 \ K$, $\rho_0 = p_0/(R \theta_0)$, and the gravitational potential is $\phi = gy$. The domain is $1000 \ m \times 1000 \ m$ with solid wall boundary conditions on all sides. The warm bubble is introduced as a potential temperature perturbation which involves replacing $\theta_0$ with $\theta_0 + \Delta \theta (x,y)$ in the density equation, where
\[
\Delta \theta (x,y) = \begin{cases}
0 & \textrm{if } r > r_c\\
\frac{\theta_c}{2}[1 + \cos(\frac{\pi r}{r_c})] & \textrm{if } r <  r_c
\end{cases} 
\quad \textrm{where} \quad
r = \sqrt{(x - x_c)^2 + (y - y_c)^2}
\]
Here, $\theta_c = 0.5 \ K$ is the perturbation strength, the initial location of the perturbation $(x_c, y_c) = (500 \ m, 350 \ m)$, and the radius of the bubble $r_c = 250 \ m$. The simulations are performed upto a final time of 700 s. Using a grid of $401\times401$ points, contour plots for the ascension of the warm bubble are drawn over the prescribed period of time and is shown in figure~(\ref{fig:bubblecontours}). The bubble rises due to buoyancy effects since the density inside the bubble is smaller than the ambient density and the interface deforms and rolls up due to physical instabilities. The solution is similar to that observed from previous studies~\cite{Ghosh2016} which use a fifth order accurate CRWENO5 scheme.  Figures~(\ref{fig:bubblecrosssection}) shows a comparison of line plots of potential temperature perturbations along cross-sectional lines at  $x = 500 \ m$ and $y = 720 \ m$ between CRWENO5 and the present scheme for two grids of sizes $201\times201$ and $401\times401$. We can observe that the solution with the present well-balanced scheme compares favourably well with the CRWENO5 scheme, considering that our scheme is only second order accurate.  
\begin{figure}
\begin{center}
\begin{tabular}{cc}
\includegraphics[width=0.48\textwidth]{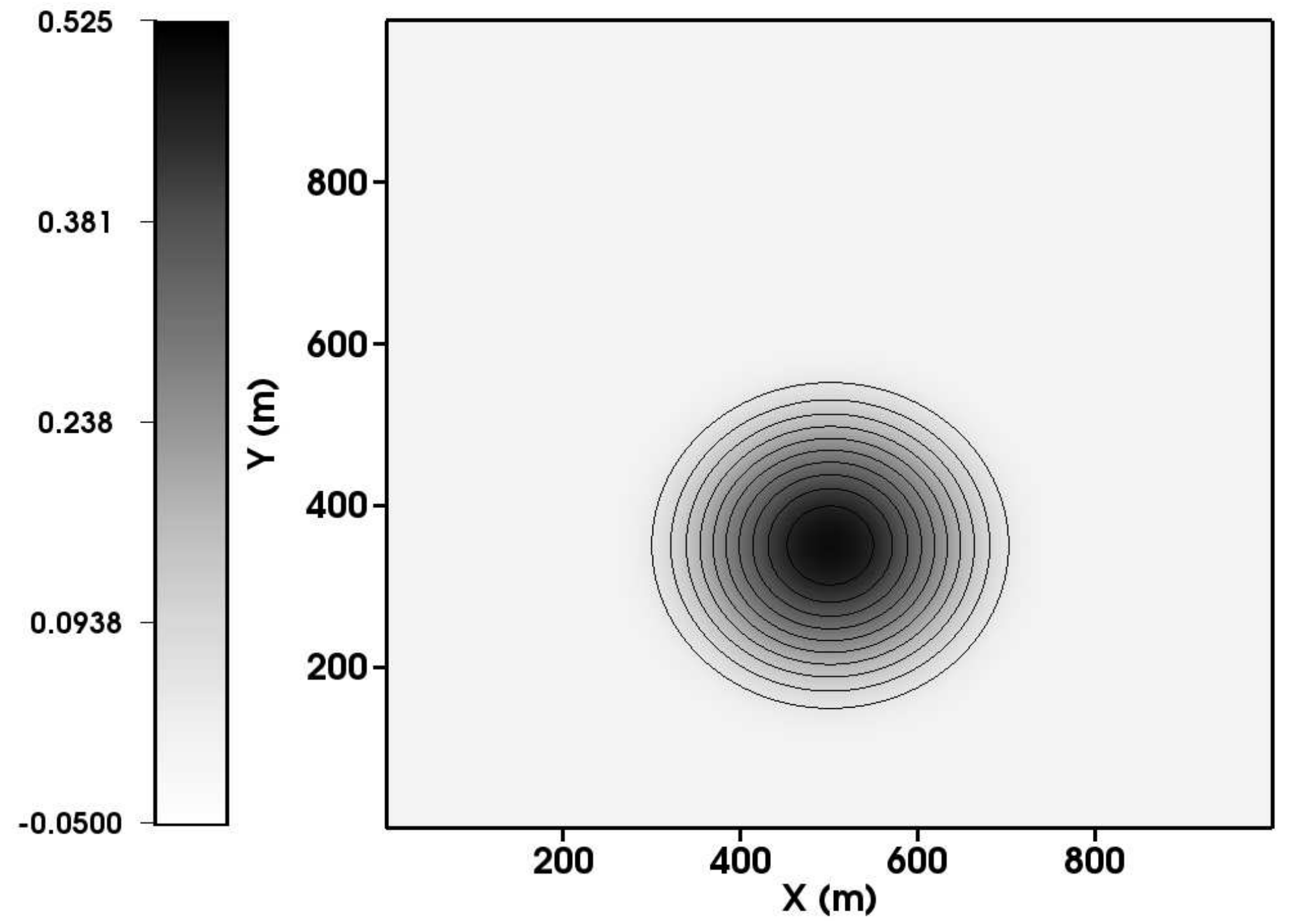} &
\includegraphics[width=0.48\textwidth]{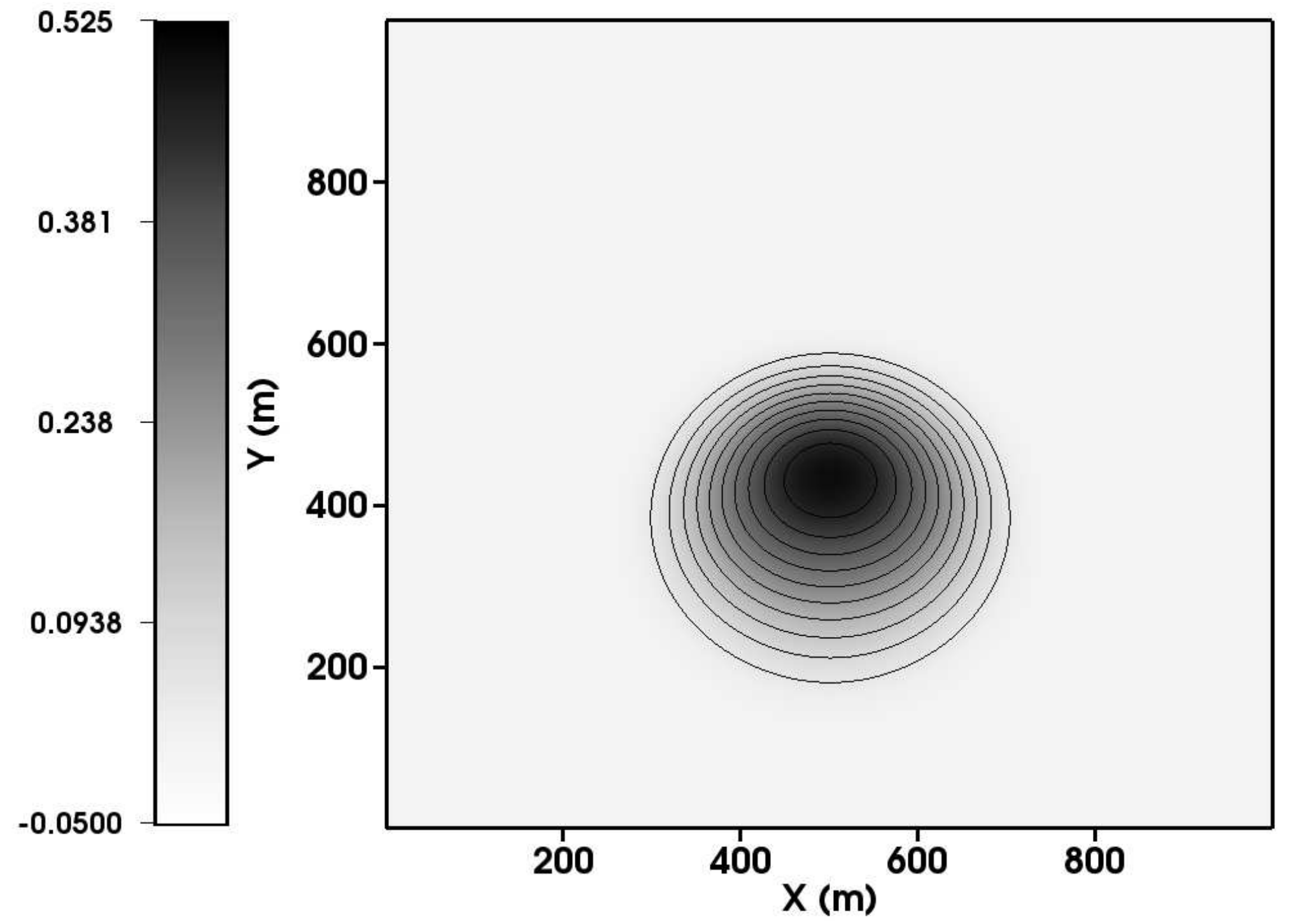} \\
(a) $ t = 0 $ $s$ & (b) $t = 150 $ $s$ \\
\includegraphics[width=0.48\textwidth]{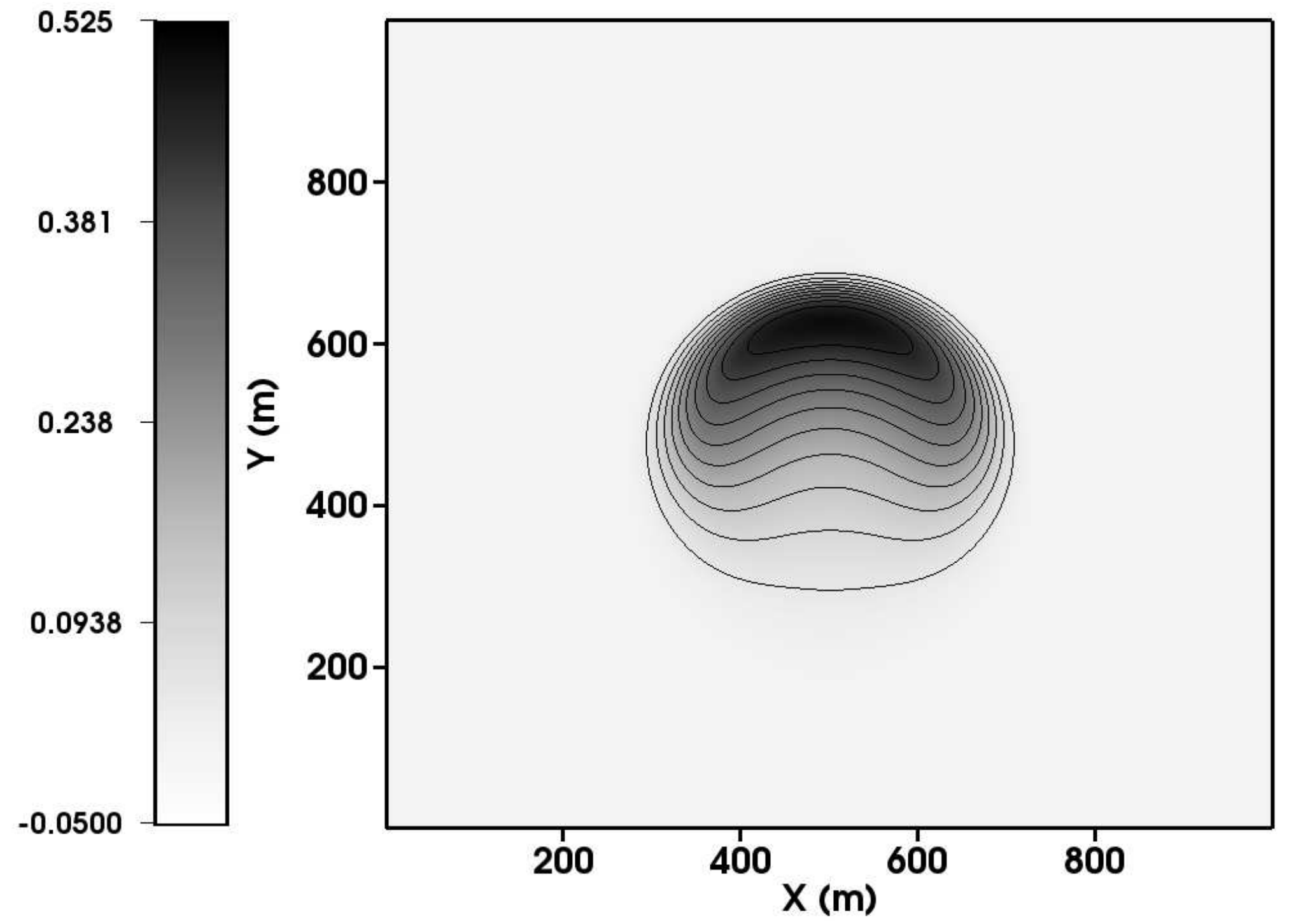} &
\includegraphics[width=0.48\textwidth]{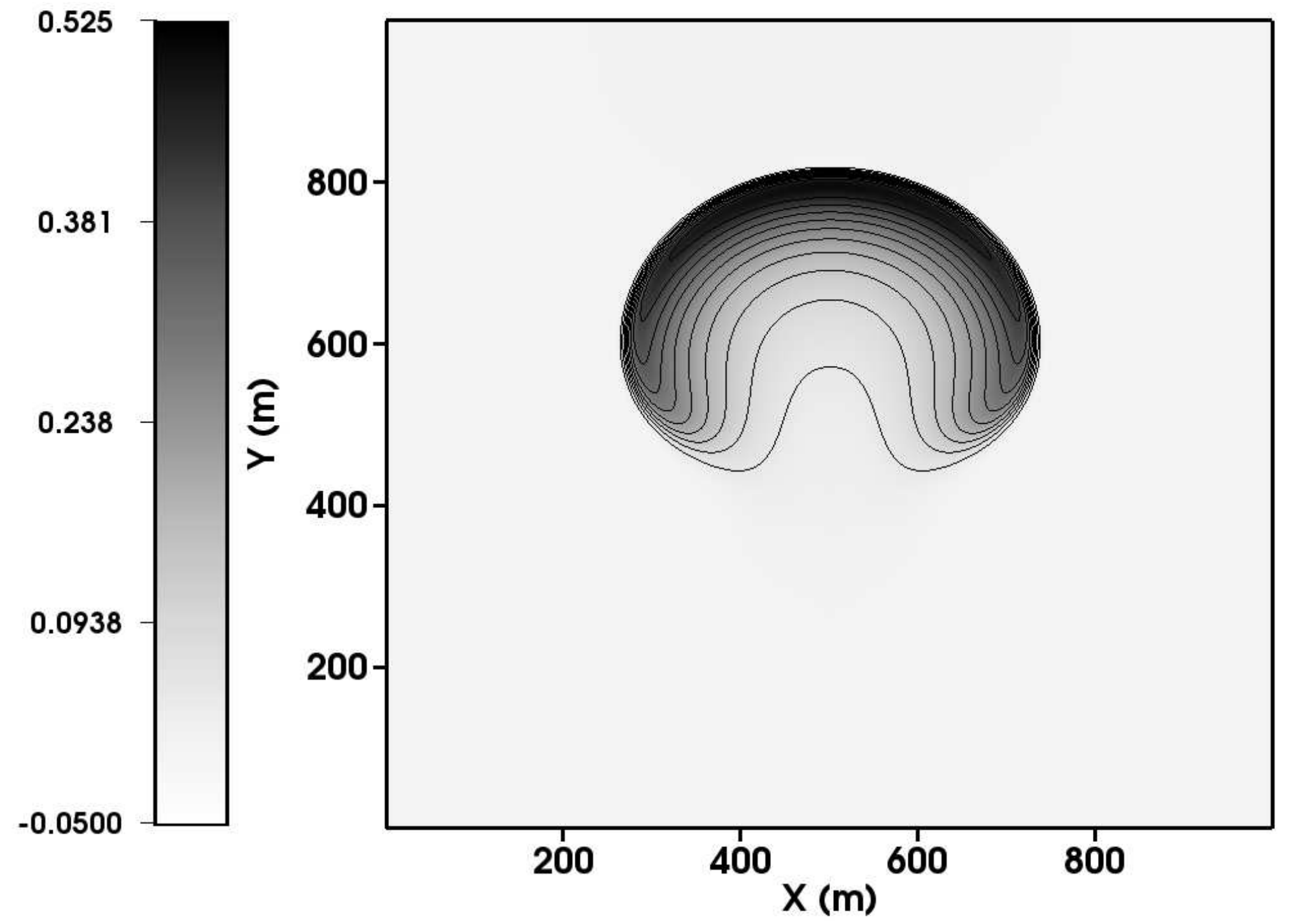} \\
(c) $t = 300 $ $s$ & (d) $t = 450 $ $s$\\
\includegraphics[width=0.48\textwidth]{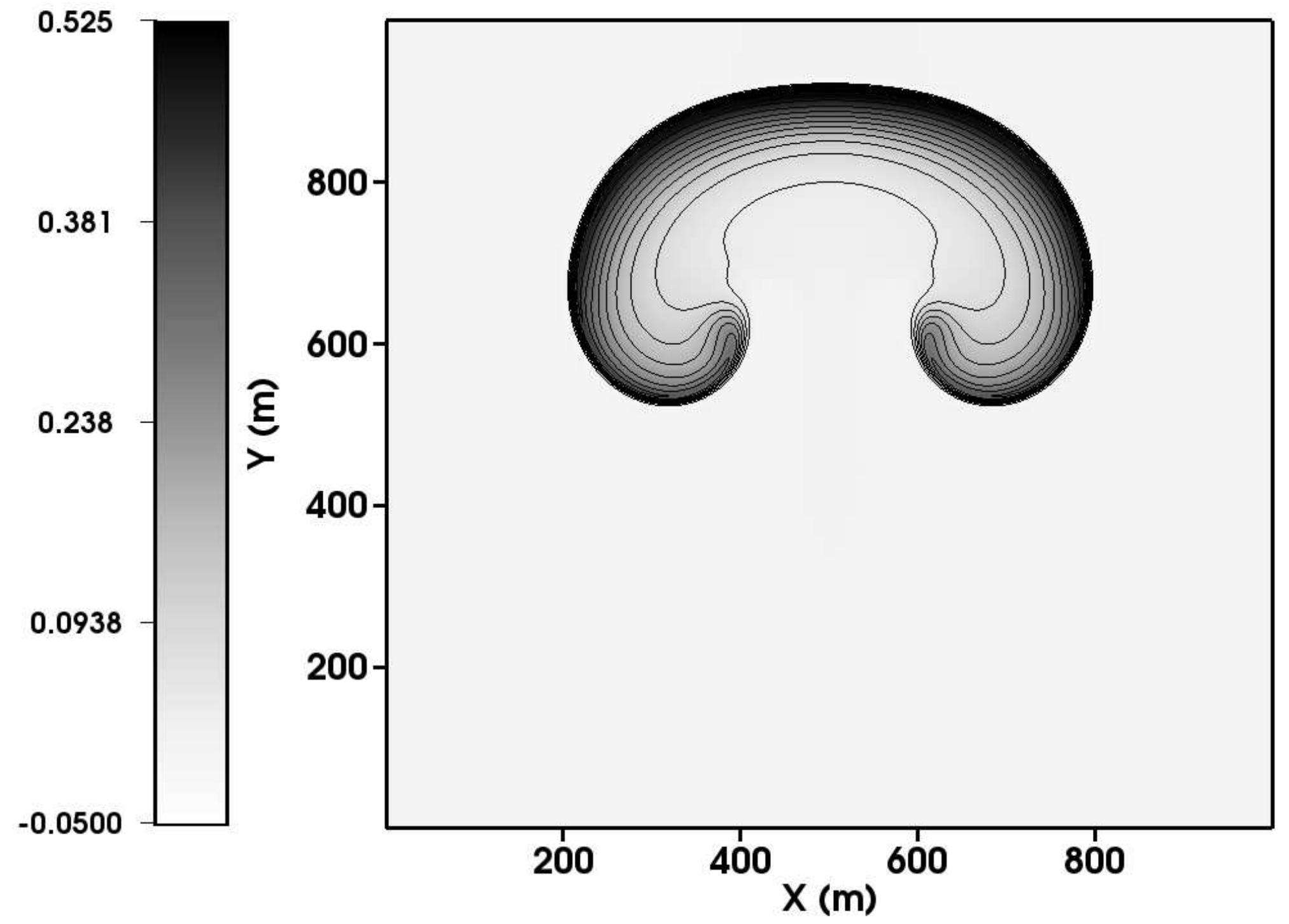} &
\includegraphics[width=0.48\textwidth]{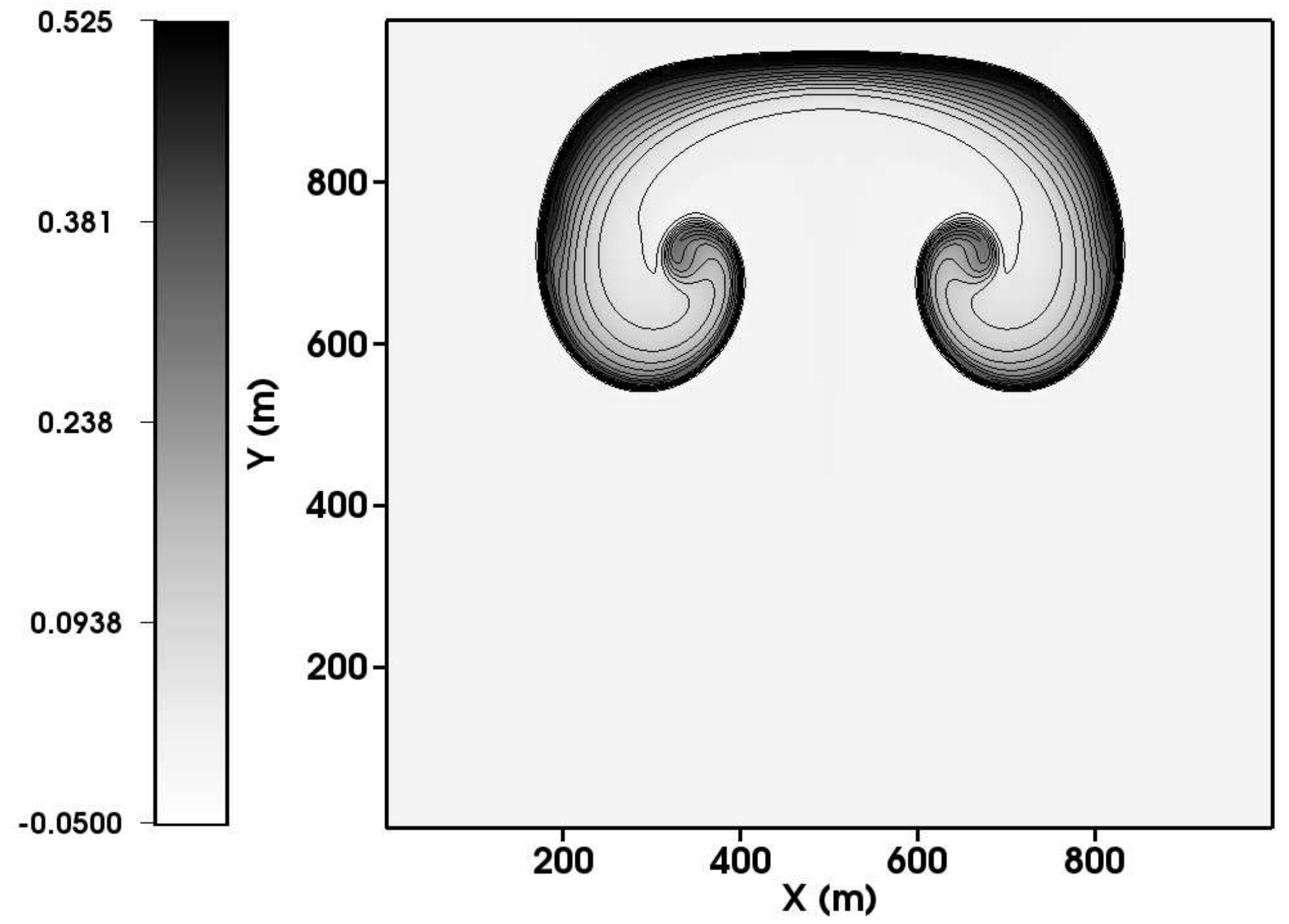} \\
(e) $t = 600 $ $s$ & (f) $t = 700 $ $s$
\end{tabular}
\caption{Rising thermal bubble, potential temperature perturbation ($\Delta \theta$) contours; 10 contour levels between -0.05 and 0.525 at different times.}
\label{fig:bubblecontours}
\end{center}
\end{figure}
\begin{figure}
\begin{center}
\begin{tabular}{cc}
\includegraphics[width=0.47\textwidth]{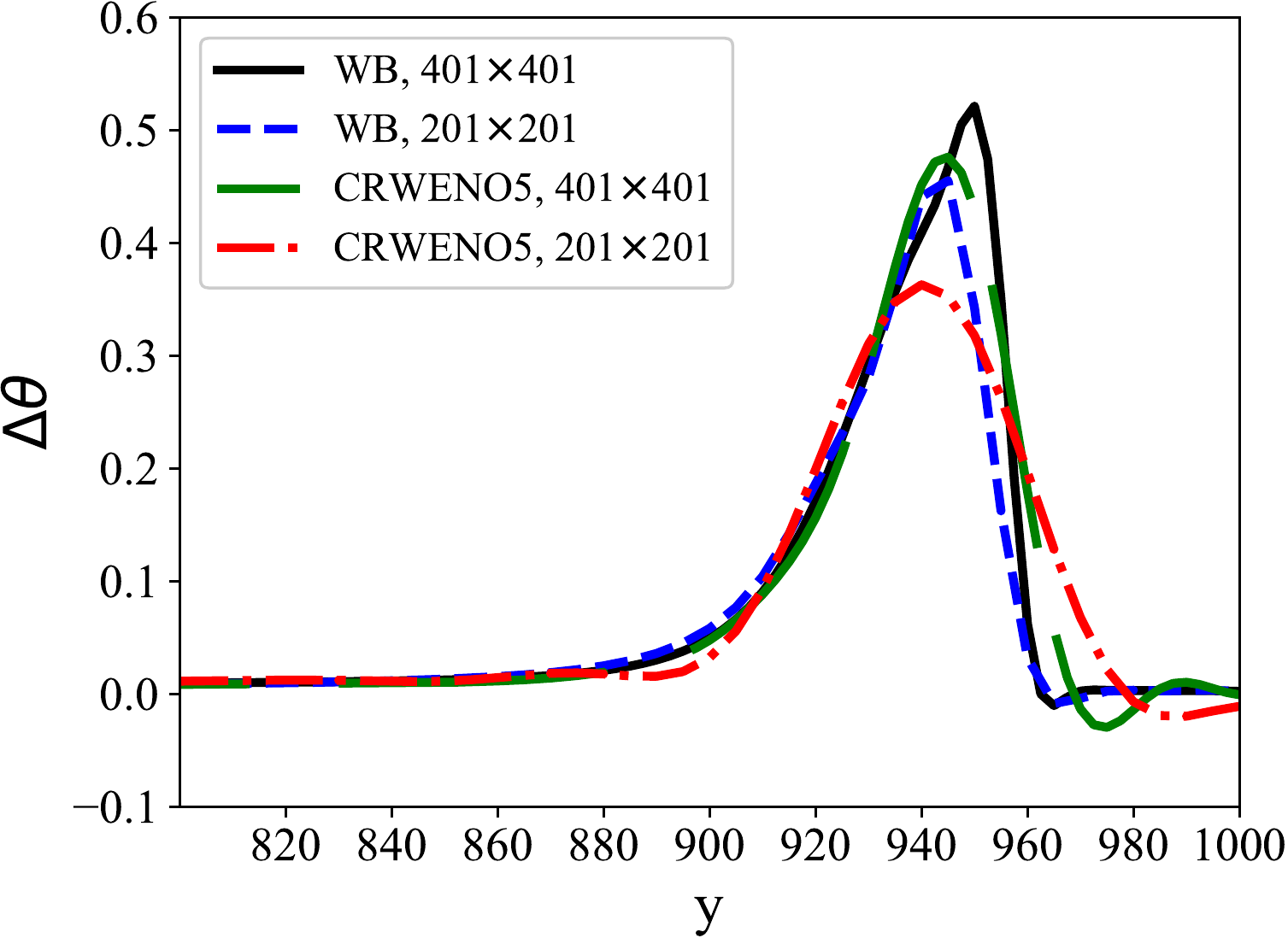} &
\includegraphics[width=0.47\textwidth]{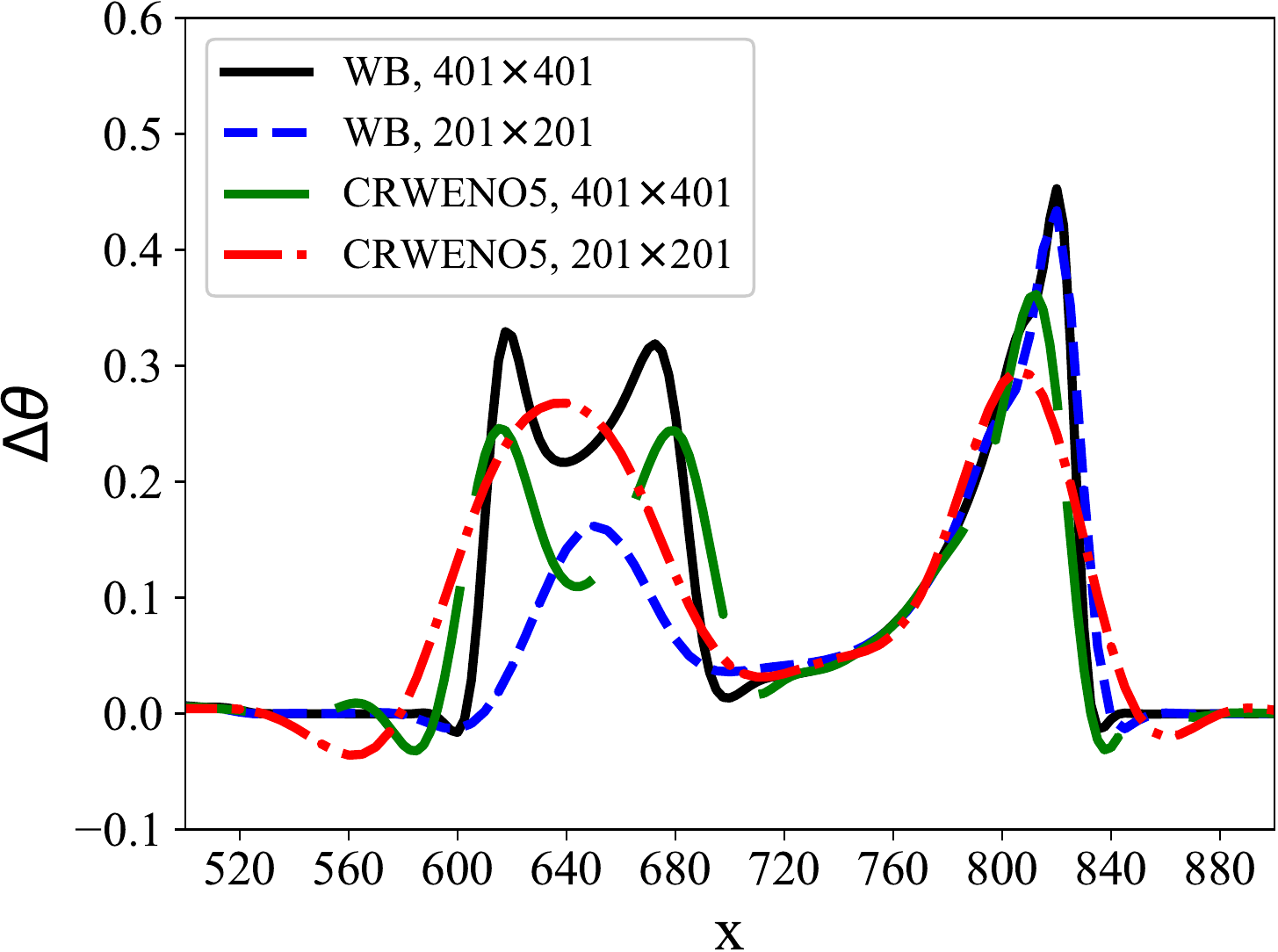} \\ 
(a) $x = 500 $ $m $& (b) $y = 720 $ $m$
\end{tabular}
\caption{Cross-sectional view of potential temperature perturbations for rising thermal bubble at time t = 700 s. }
\label{fig:bubblecrosssection}
\end{center}
\end{figure}
\subsection{Inertia-gravity wave}

This test case~\cite{Skamarock1994}, \cite{Giraldo2008} involves the evolution of a potential temperature perturbation in a channel which is of interest in validation of numerical weather prediction schemes. The hydrostatic state has a constant Brunt-V\"{a}is\"{a}l\"{a} frequency of $N = 0.01 /s$ which yields the potential temperature profile
\[
\thetae = \theta_0 \exp\left(\frac{N^2 y}{g} \right), \qquad \theta_0 = 300 \ K, \qquad g = 9.8 \ m/s^2
\]
The corresponding density and pressure can be written in terms of the Exner pressure $\bar{\Pi}$,
\[
\bar{\Pi} = 1 + \frac{(\gamma - 1) g^{2}}{\gamma R \theta_0 N^2}\left[\exp\left(-\frac{N^2 y}{g} \right) - 1 \right], \qquad \pe = p_0 \bar{\Pi}^{\frac{\gamma}{\gamma - 1}}, \qquad \rhoe = \rho_0 \exp\left(\frac{-N^2 y}{g}\right) \bar{\Pi}^{\frac{1}{\gamma - 1}}
\]
where $p_0 = 10^5 \ N/m^2$, $\rho_0 = p_0/(R \theta_0)$, $\gamma=1.4$ and $R = 287.058 J/(kg \ K)$.  A uniform initial velocity of $u$ = 20 $m/s$ is imposed in the $x$-direction . The initial condition for the hydrostatic density is perturbed by adding a deviation to the potential temperature given by
\[
\Delta \theta(x,y) = \theta_c \sin\left(\frac{\pi y}{h_c}\right) \left[1 + \left(\frac{x - x_c}{a_c}\right)^2\right]^{-1}
\]
where $\theta_c = 0.01 \ K$ is the perturbation strength, $h_c = 10,000 \ m$ is the total height of the domain, $x_c = 1,00,000 \ m$ is the initial horizontal location of the perturbation and $a_c = 5,000 \ m$ is the perturbation half-width. The domain is of size $300 \km \times 10 \km$ with periodic boundary condition along the $x$ axis, while the top and bottom boundaries are treated as solid walls. A grid resolution of $1201\times51$ is used.

Figure~(\ref{fig:gravitywavecontours}) shows the evolution of the initial perturbation due to the generation of horizontal gravity-wave at times $t$ = 0 s, 1000 s, 2000 s and 3000 s. We can see that the centre of imposed perturbation moves from its initial location of $x$ = 100,000 m to $x$ $\approx$ 160,000 m due to the presence of mean $u$-velocity. This shows good agreement with results found in literature~\cite{Giraldo2008}, \cite{Ghosh2016}. For further confirmation, a line plot along the cross-section of $y = 5000 \ m$ is taken and compared with results obtained from CRWENO5~\cite{Ghosh2016}. We can observe that the proposed 2'nd order scheme gives excellent agreement  with a 5'th order WENO scheme.
\begin{figure}
\begin{center}
\begin{tabular}{cc}
\includegraphics[width=0.49\textwidth]{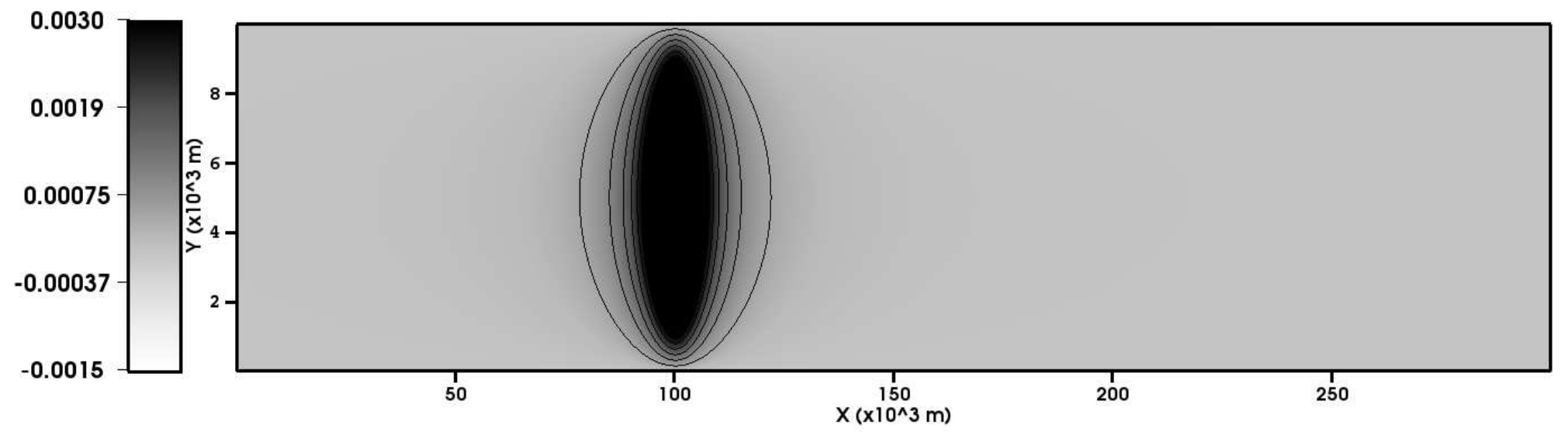} &
\includegraphics[width=0.49\textwidth]{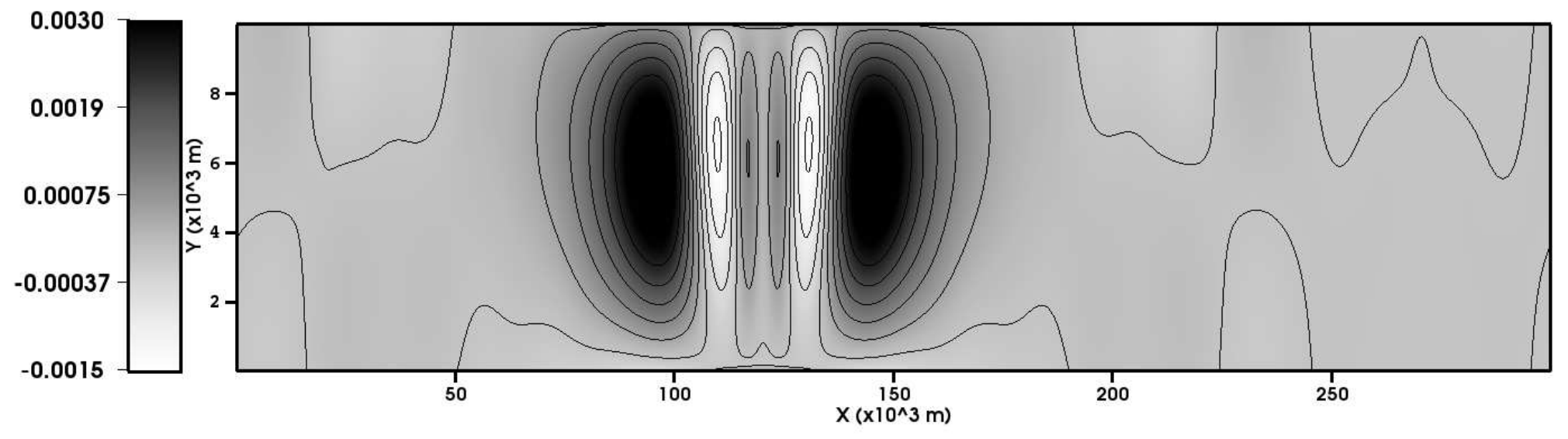} \\
(a) $t = 0 s$  & (b) $t = 1000 s$ \\
\includegraphics[width=0.49\textwidth]{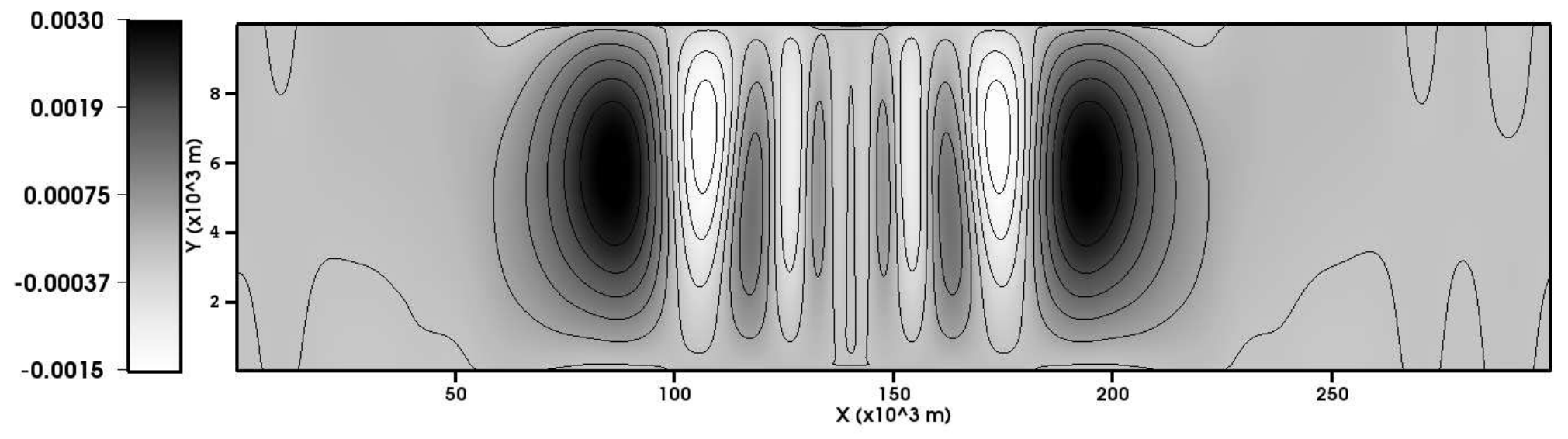} &
\includegraphics[width=0.49\textwidth]{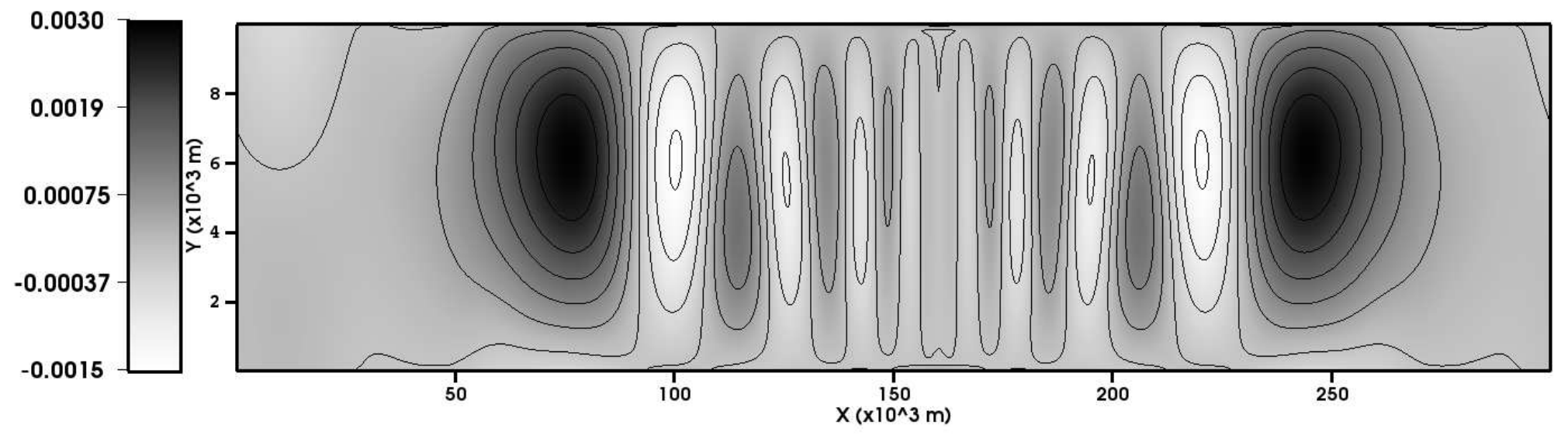} \\
(c) $t = 2000 s$ & (d) $t = 3000 s$
\end{tabular}
\caption{Inertial-gravity wave potential temperature perturbation($\Delta \theta$) contours: 10 contour levels between -0.0015 and 0.003 at different times.}
\label{fig:gravitywavecontours}
\end{center}
\end{figure} 
\begin{figure}
\begin{center}
\includegraphics[width = 0.5\textwidth]{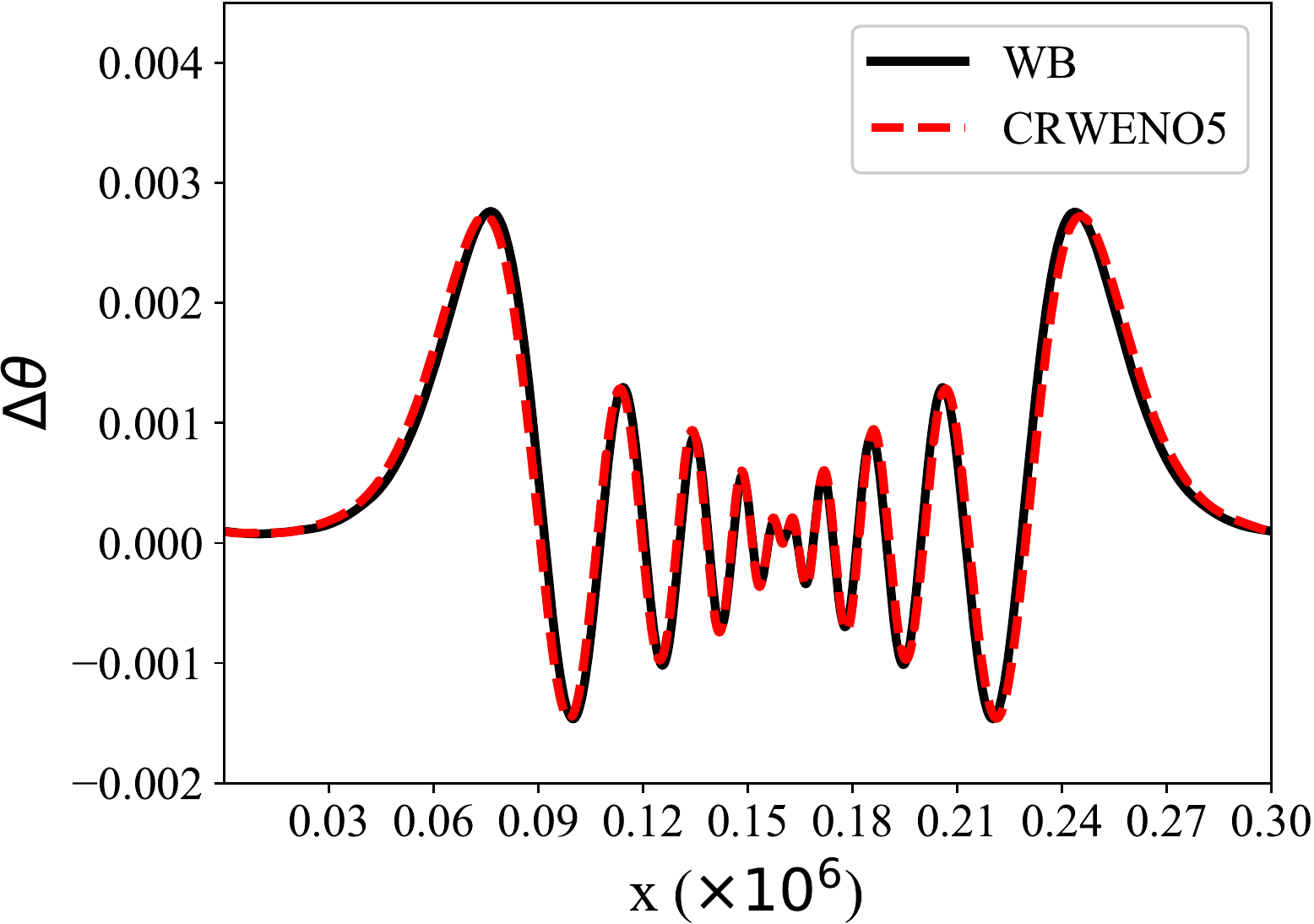}
\caption{Potential temperature perturbations at cross-section of  $y$ = 5000 m for Inertial-gravity wave at final time $t$ = 3000 s.}
\label{fig:wavecrosssection}
\end{center}
\end{figure}
\section{Summary and conclusions}
\label{sec:end}
Hydrostatic solutions depend on the equation of state and some additional factors that may change from problem to problem, and hence it is not possible to construct a well-balanced scheme for all possible situations. We have instead constructed a scheme that admits hydrostatic solutions and these are shown to be second order approximation of the exact hydrostatic solutions. The scheme is general in the sense that it works for any equation of state and does not require \'a priori knowledge of the hydrostatic solution. A restriction is that the well-balanced property holds only when the gravitational force field is aligned with the grid lines. However, even when this condition is not satisfied, we show numerically that the scheme is able to maintain the hydrostatic solution to good accuracy while a non well-balanced scheme would break down. This behaviour can be further improved using radial meshes in case of astrophysical simulations where the gravitational force is predominantly radial, and will form part of our future work. The scheme has been shown to accurately compute small perturbations around hydrostatic solutions and the extensive set of test cases show that it has a wide range of applicability in computing flows under gravitational field.
\section*{Acknowledgements}
This work was supported by the Airbus Foundation Chair on Mathematics of Complex Systems at TIFR-CAM, Bangalore, India. The authors also thank the two anonymous reviewers whose constructive criticism helped us to improve the paper by providing additional comparisons to reference solutions.
\appendix
\section{Importance of well-balanced property}
\label{sec:imp}
In order to explain the significance of well-balancing on the computation of small perturbations, let us consider a generic time dependent PDE
\begin{equation}
\df{u}{t} = R(u)
\label{eq:ode}
\end{equation}
which has a stationary solution $u_e$, i.e., $R(u_e) = 0$. Small perturbations $\up$ around the stationary solution $u_e$ are governed by the linear differential equation
\begin{equation}
\df{\up}{t} = R'(u_e) \up
\label{eq:pode}
\end{equation}
Suppose we approximate the PDE by a numerical scheme
\[
\df{u_h}{t} = R_h(u_h)
\]
where $u_h$ might denote the vector of grid point values. Assume that the scheme is not well-balanced, i.e., $R_h(I_h u_e) \ne 0$ where $I_h$ denotes the interpolation operator. From the consistency of the scheme we have $R_h(I_h u_e) = C h^\alpha \|u_e\|$ for some $\alpha \ge 1$ and some solution norm $\| \cdot \|$ which is usually some Sobolev norm, and $C$ is a constant independent of the mesh size $h$ and the solution $u_e$. Suppose we want to compute the solution for small perturbations around the stationary state, $u_h = I_h u_e + \up_h$ where $\|\up_h \| \ll \| u_e \|$. Then linearizing the numerical scheme around $u_h=I_h u_e$
\[
\df{\up_h}{t} = R_h(I_h u_e + \up_h) = R_h(I_h u_e) + R_h'(I_h u_e)\up_h + O(|\up_h|^2)
\]
To first order the perturbations are governed by the linear equation
\begin{equation}
\label{eq:tenwb}
\df{\up_h}{t} = R_h(I_h u_e) + R_h'(I_h u_e) \up_h = C h^\alpha \|u_e\| + R_h'(I_h u_e) \up_h
\end{equation}
The first term on the right hand side is the error due to non well-balancedness and this error can dominate the linear evolution of small perturbations since $\|u_e\| \gg \|\up_h\|$. If the scheme is well-balanced for the exact hydrostatic solution $u_e$, then $R_h(I_h u_e) = 0$, so that the perturbation equation is
\[
\df{\up_h}{t} = R_h'(I_h u_e) \up_h = R'(I_h u_e) \up_h + [R_h'(I_h u_e) - R'(I_h u_e)] \up_h = R'(I_h u_e) \up_h + C h^\alpha \| \up_h \|
\]
where in the last step, we have assumed the consistency of the scheme in the sense that $\| R'(I_h u_e) - R_h'(I_h u_e) \| = O(h^\alpha)$, i.e., the same order of consistency as the non-linear scheme. In this case, the computed perturbation solution $\up_h$ can be expected to be a good approximation to the true solution governed by equation~(\ref{eq:pode}). Note that $R_h'$ is the linearization of the numerical scheme and this can be expected to approximate the linearized differential operator $R'$ if $R_h$ is smooth at the hydrostatic solution.

In practice, we may not have explicitly available equilibrium solutions in terms of formulae for hydrostatic pressure and density, and we may not be able to construct a scheme for which $R_h(I_h u_e)=0$ holds. In this case, we may find an approximate equilibrium solution $u_{h,e}$ and let us assume that our scheme is well-balanced for this approximate equilibrium solution, i.e., $R_h(u_{h,e}) = 0$. Then the equation for linear perturbations is 
\begin{equation}
\df{\up_h}{t} = R_h(u_{h,e}) + R_h'(u_{h,e}) \up_h = R_h'(u_{h,e}) \up_h
\label{eq:dwb}
\end{equation}
Let us assume that the discrete hydrostatic solution has the same accuracy as the consistency of the scheme $R_h$, i.e., $\| I_h u_e - u_{h,e} \| = O(h^\alpha)$. Then the perturbation equation can be written as
\[
\df{\up_h}{t} = R'(I_h u_e) \up_h + [R_h'(u_{h,e}) - R'(I_h u_e)] \up_h
\]
We can bound the last term on the right hand side as
\begin{eqnarray*}
\| R_h'(u_{h,e}) - R'(I_h u_e) \| &\le& \| R_h'(u_{h,e}) - R_h'(I_h u_e) \| + \| R_h'(I_h u_e) - R'(I_h u_e) \| \\
&\le& \| R_h''(u_{h,e}) \| \cdot \| u_{h,e}-I_h u_e \| + O(h^\alpha) \\
&=& O(h^\alpha) + O(h^\alpha)
\end{eqnarray*}
so that the perturbation equation is of the form
\[
\df{\up_h}{t} = R'(I_h u_{e}) \up_h + C h^\alpha \| \up_h \|
\]
Hence the discretely well-balanced scheme is consistent with the exact perturbation equation~(\ref{eq:pode}) and more importantly, the truncation term is proportional to $\| \up_h \| \ll \| u_e \|$ unlike the truncation term in the non-well-balanced scheme~(\ref{eq:tenwb}) which is proportional to $\| u_e \|$. We can hence expect the numerical solutions from such a discretely well-balanced scheme to provide a good approximation for the computation of small perturbations around hydrostatic solutions. In the present work, we consider second order schemes so that $\alpha=2$, and we also show we can construct discrete hydrostatic solutions that are second order accurate approximations of the exact solutions, which are then well-balanced by our scheme.
\section*{References}
\bibliography{bbibtex}

\end{document}